\theoremstyle{plain}
\newtheorem{theorem}{Theorem}[section]
\newtheorem{corollary}[theorem]{Corollary}
\newtheorem{lemma}[theorem]{Lemma}
\theoremstyle{definition}
\theoremstyle{remark}
\begin{document}

\title{Notes of Boundedness on Cauchy Integrals on Lipschitz Curves
  ($p=2$)}
\author{Deng Guantie\thanks{
      E-mail: denggt@bnu.edu.cn, School of Mathematical Sciences, 
      Beijing Normal University, Beijing, China.} 
    \ and Liu Rong\thanks{
      Corresponding author, E-mail: rong.liu@mail.bnu.edu.cn
      School of Mathematical Sciences, 
      Beijing Normal University, Beijing, China.}
}

\maketitle

\begin{abstract}
  We provide the details of the first proof in~\cite{CJS89}, which proved 
  that Cauchy transform of $L^2$~functions on Lipschitz curves is bounded.
  We then prove that every $L^2$~function on Lipschitz curves is the sum
  of non-tangential boundary limit of functions in $H^2(\Omega_\pm)$, 
  the Hardy spaces on domains over and under the Lipschitz curve.
  We also obtain a more accurate boundary of Cauchy transform under the
  condition that the Lipschitz curve is the real axis. 
\end{abstract}

{\bf Keywords:}
  Cauchy Integral, Lipschitz curve, Hardy space, 
  non-tangential boundary limit,

{\bf 2010 Mathematics Subject Classification:}
  Primary: 30H10, Secondary: 30E20, 30E25 

\section{Introduction}
  Paper~\cite{CJS89} offered two elementary proofs of the boundedness of 
  Cauchy integral (or transform) on Lipschitz curve~$\Gamma$, with integral 
  index~$p=2$. The first one in which we are interested is succinct, 
  thus without many details. 
  In this paper, we give the full version of that proof. Since the Cauchy 
  integral is actually analytic on two domains over and under $\Gamma$ 
  which we denote as $\Omega_\pm$, it is in $H^2(\Omega_\pm)$, the 
  Hardy spaces on $\Omega_\pm$, hence has non-tangential
  boundary limits from above and below $\Gamma$~\cite{DL171}. Then we could 
  reach the result that every function in $L^2(\Gamma)$ is the sum of two 
  functions in $H^2(\Omega_+)$ and $H^2(\Omega_-)$, respectively. That result
  is usually written as $L^2(\Gamma)= H^2(\Omega_+)+H^2(\Omega_-)$. 
  We also apply
  the same method to the special case of $\Gamma$ be $\mathbb{R}$, and obtain
  a more accurate boundary of the Cauchy transform.
\section{Definitions}

Let $\Gamma=\{\zeta(u)=u+\mathrm{i}a(u)\colon u\in\mathbb{R}\}$ be 
a Lipschitz curve in the complex plain~$\mathbb{C}$, where 
$\lVert a'\rVert_\infty = M< \infty$, 
$\Omega_{\pm}= \{\zeta(u)\pm\mathrm{i}\tau\colon u\in\mathbb{R}, \tau>0\}$ 
be the two domains lying above and below $\Gamma$, 
$\Phi_{\pm}$ be the two conformal representations from 
$\mathbb{C}_{\pm}=\{x+\mathrm{i}y\colon x\in\mathbb{R}, y>0\}$ 
onto $\Omega_{\pm}$, which both extend to the boundary, such that
$\Phi_{\pm}(\mathbb{R})= \Gamma$ and $\Phi_{\pm}(\infty)= \infty$, 
$\Psi_{\pm}\colon \Omega_{\pm}\to\mathbb{C}_{\pm}$ be the inverse mappings 
of $\Phi_{\pm}$. More detail of $\Phi_\pm$ and $\Psi_\pm$ are in~\cite{Ke80}
and \cite{DL171}.

For $w\in\mathbb{C}$, define $d(w)$ as the distance from $w$ to 
the curve $\Gamma$, that is 
\[d(w)= \inf\{|w-\zeta|\colon \zeta\in\Gamma\},\]
which implies that  
\[d(w)\leqslant |w-\zeta(0)|\leqslant |w|+|a(0)|.\]

For $r>0$, denote $\{|z|<r\colon z\in\mathbb{C}\}$ 
as $D(0,r)$, and $D(0,1)$, the unit disk of $\mathbb{C}$, as $\mathbb{D}$.
For domain $D\subset\mathbb{C}$ and measure $\mathrm{d}m$ on it, 
let $L^2(D,\mathrm{d}m)$ be the function space of all complex valued, 
$\mathrm{d}m$~measurable functions on $D$, equipped with norm
\[\lVert f\rVert_{L^2(D,\mathrm{d}m)}
  = \Big(\int_D |f|^2\,\mathrm{d}m\Big)^{\frac12},\quad
  \text{for } f\in L^2(D,\mathrm{d}m).\] 
Thus we could consider function spaces like
$L^2(\mathbb{R},\mathrm{d}x)$, $L^2(\mathbb{C}_\pm,\mathrm{d}\lambda)$, 
where $\mathrm{d}\lambda$ is the area measure on $\mathbb{C}$, 
and a few more which will appear later in this paper.

Let $F(w)$ be a function analytic on $\Omega_+$, if
\[\sup_{\tau>0}\Big(\int_{\Gamma_\tau} |F(w)|^p |\mathrm{d}w|\Big)^{\frac1p}
  = \lVert F\rVert_{H^p(\Omega_+)}<\infty,\quad
  \text{for } 0<p<\infty,\]
where $\Gamma_\tau= \{\zeta+\mathrm{i}\tau\colon \zeta\in\Gamma\}$, 
then we say that $F(w)\in H^p(\Omega_+)$.
  
Fix $u_0\in\mathbb{R}$ such that 
$\zeta'(u_0)=|\zeta'(u_0)|\mathrm{e}^{\mathrm{i}\phi_0}$ exists, and 
choose $\phi\in(0,\frac{\pi}2)$, we denote $\zeta_0=\zeta(u_0)$ and let
\[\Omega_{\phi}(\zeta_0)
  =\{\zeta_0+r\mathrm{e}^{\mathrm{i}\theta}
     \colon r>0, \theta-\phi_0\in(\phi,\pi-\phi)\},\]
then we say that a function $F(w)$ on $\Omega^+$ has non-tangential boundary 
limit~$l$ at $\zeta_0$ if for $w\in\Omega_{\phi}(\zeta_0)\cap\Omega^+$,
\[\lim_{w\to\zeta_0}
  F(w)= l, \quad \text{for any } \phi\in\Big(0,\frac{\pi}2\Big).\]
It is not difficult to verify that for fixed $\phi\in(0,\frac{\pi}2)$, 
there exists constant $\delta>0$, such that, if $|z|<\delta$ and 
$\zeta_0+z\in\Omega_\phi(\zeta_0)$, then $\zeta_0+z\in\Omega^+$ and 
$\zeta_0-z\in\Omega^-$.

Let $\zeta$, $\zeta_0\in\Gamma$, we define
\[K_z(\zeta,\zeta_0)
  = \frac1{2\pi\mathrm{i}}\bigg(\frac1{\zeta-(\zeta_0+z)}
       - \frac1{\zeta-(\zeta_0-z)}\bigg),\]
for $z\in\mathbb{C}$ and $z\neq \pm(\zeta-\zeta_0)$, 
then $K_z(\zeta,\zeta_0)$ is well-defined and we could write 
\begin{equation}\label{equ-170623-1150}
  K_z(\zeta,\zeta_0)
  = \frac1{\pi\mathrm{i}}\cdot\frac{z}{(\zeta-\zeta_0)^2-z^2},
\end{equation}
We could also verify that, if $\zeta_0+z\in\Omega^+$ and 
$\zeta_0-z\in\Omega^-$, then 
\[\int_{\Gamma} K_z(\zeta,\zeta_0)\,\mathrm{d}\zeta= 1.\]

\section{Lemmas}

Let $f$ be a univalent holomorphic function on $\mathbb{D}$. 
If $f(0)=0$ and $f'(0)=1$, we say that $f$ is in $\mathcal{S}$. 
In other words, we define 
\[\mathcal{S}= \{f(z)= z+a_2 z^2+\cdots\colon
    f \text{ is holomorphic and univalent on } \mathbb{D}\}.\]

\begin{lemma}\label{lem-170726-1025}
  If $f\in\mathcal{S}$ and is continuous to the boundary $\partial\mathbb{D}$, 
  then 
  \[\frac14\leqslant \inf\{|f(z)|\colon |z|=1\}\leqslant 1.\]
\end{lemma}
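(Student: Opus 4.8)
The statement separates into an easy upper bound and a substantive lower bound; the latter is precisely Koebe's one–quarter theorem. The plan is to dispose of the inequality $\inf_{|z|=1}|f(z)|\leqslant 1$ by the minimum modulus principle, and to obtain $\inf_{|z|=1}|f(z)|\geqslant\tfrac14$ from the area theorem via Bieberbach's coefficient estimate $|a_2|\leqslant 2$.

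For the upper bound I would set $g(z)=f(z)/z$. Since $f(0)=0$ and $f'(0)=1$, the singularity at the origin is removable with $g(0)=1$; since $f$ is univalent its only zero on $\mathbb{D}$ is at the origin, so $g$ is zero-free on $\mathbb{D}$ and continuous on $\overline{\mathbb{D}}$. The minimum modulus principle then forces $|g|$ to attain its minimum over $\overline{\mathbb{D}}$ on the boundary, whence, using $|f(z)|=|g(z)|$ for $|z|=1$,
\[\inf_{|z|=1}|f(z)|=\min_{|z|=1}|g(z)|\leqslant |g(0)|=1.\]

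For the lower bound I would first prove $f(\mathbb{D})\supseteq D(0,\tfrac14)$. The engine is the area theorem: every univalent $h(z)=z+b_0+\sum_{n\geqslant1}b_n z^{-n}$ on $\{|z|>1\}$ satisfies $\sum_{n\geqslant 1} n|b_n|^2\leqslant 1$, because the area $\pi\big(1-\sum_n n|b_n|^2\big)$ omitted by $h$ is nonnegative. Applying this to the square-root/inversion transform of $f$ — namely $h(z)=1/\psi(1/z)$ with $\psi(z)=\sqrt{f(z^2)}=z+\tfrac{a_2}{2}z^3+\cdots$, an odd univalent function — gives $b_1=-a_2/2$ and hence $|a_2|\leqslant 2$. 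Then, for any omitted value $w_0\notin f(\mathbb{D})$, the composition $\dfrac{w_0 f(z)}{w_0-f(z)}$ lies again in $\mathcal{S}$ with second coefficient $a_2+1/w_0$; applying $|a_2|\leqslant 2$ to both $f$ and this composition yields
\[\Big|\tfrac{1}{w_0}\Big|\leqslant\Big|a_2+\tfrac{1}{w_0}\Big|+|a_2|\leqslant 4,\]
so $|w_0|\geqslant\tfrac14$. Thus every omitted value has modulus at least $\tfrac14$, i.e. $D(0,\tfrac14)\subseteq f(\mathbb{D})$.

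To pass from this image statement to the infimum over $|z|=1$, I would argue that no boundary point can map to an interior value: if $f(\mathrm{e}^{\mathrm{i}\theta_0})=f(z_0)$ with $z_0\in\mathbb{D}$, the open mapping theorem sends a small disk about $z_0$ onto a neighborhood of that value, and radial approach $f(r\mathrm{e}^{\mathrm{i}\theta_0})\to f(z_0)$ together with injectivity on $\mathbb{D}$ would force $r\mathrm{e}^{\mathrm{i}\theta_0}$ into that disk, contradicting $r\mathrm{e}^{\mathrm{i}\theta_0}\to\partial\mathbb{D}$. Hence $f(\partial\mathbb{D})\cap f(\mathbb{D})=\varnothing$, so each $f(\mathrm{e}^{\mathrm{i}\theta})\in\partial f(\mathbb{D})$, which lies at distance $\geqslant\tfrac14$ from the origin $f(0)$ because $D(0,\tfrac14)$ is an open subset of $f(\mathbb{D})$. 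I expect the main obstacle to be establishing Bieberbach's inequality rigorously: that is where the genuine analytic content sits, requiring the construction of the odd square-root transform together with verification of its univalence, and the area computation behind the area theorem. The boundary passage is a secondary point but one worth stating explicitly, since continuity to $\partial\mathbb{D}$ alone does not assert injectivity there.
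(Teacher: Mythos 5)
Your argument is correct, but it takes a different route from the paper on both halves of the inequality. For the upper bound, the paper sets $b=\inf_{|z|=1}|f(z)|$, observes $\overline{D(0,b)}\subset f(\overline{\mathbb{D}})$, and applies the Schwarz lemma to $g(z)=f^{-1}(bz)$ to get $|g'(0)|=b\leqslant 1$; your minimum--modulus argument on $f(z)/z$ reaches the same conclusion without ever inverting $f$, which is arguably cleaner since the paper's route quietly needs $f^{-1}$ to be well defined on $\overline{D(0,b)}$. For the lower bound the paper simply cites the Koebe one-quarter theorem, whereas you reprove it via the area theorem and $|a_2|\leqslant 2$; that is the standard proof and adds length but no risk. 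The one place where you genuinely add content rather than just re-derive it is the boundary passage: Koebe gives $D(0,\tfrac14)\subseteq f(\mathbb{D})$, and concluding $|f(\mathrm{e}^{\mathrm{i}\theta})|\geqslant\tfrac14$ requires knowing $f(\partial\mathbb{D})\cap f(\mathbb{D})=\varnothing$, which the paper leaves implicit and you prove correctly with the open mapping theorem. In short: same statement, a more self-contained and slightly more careful proof, at the cost of reproducing textbook material (the area theorem and Bieberbach's estimate) that the paper is content to cite.
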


\begin{proof}
  The first part of the above inequality comes from the Koebe $\frac14$ 
  theorem~\cite{De10}, and we only need to prove the second part. Let 
  $\inf\{|f(z)|\colon |z|=1\}= b$, then $b\geqslant\frac14$ and 
  $\overline{D(0,b)}\subset f(\overline{\mathbb{D}})$, 
  or $f^{-1}(\overline{D(0,b)})\subset \overline{\mathbb{D}}$. 
  Define $g(z)= f^{-1}(bz)$ on $\mathbb{D}$, then 
  $g(\mathbb{D})\subset \overline{\mathbb{D}}$, and $g(0)=f^{-1}(0)=0$. 
  By Schwarz lemma, $|g'(0)|\leqslant 1$. Since $f'(0)=1$, 
  $g'(z)= (f^{-1})'(bz)\cdot b$, and $(f^{-1})'(0)=(f'(0))^{-1}=1$,
  we have $|g'(0)|= |b|\leqslant 1$, and the lemma is proved.
\end{proof}

Some results below contain ``$\pm$'' as subscript and the two cases usually 
could be proved by using the same method. Then we will prove only one case 
and write the other one as a corollary.

\begin{lemma}\label{lem-170727-1550}
  If $z=x+\mathrm{i}y\in\mathbb{C}_+$, where $x\in\mathbb{R}$, $y>0$, 
  then
  \[|y\Phi_+'(z)|
    \leqslant 2d(\Phi_+(z))
    \leqslant 4|y\Phi_+'(z)|,\]
  and
  \[|y\Phi_+''(z)|\leqslant 3|\Phi_+'(z)|.\]
  Consequently,
  \[|y^2\Phi_+''(z)|\leqslant 6d(\Phi_+(z)).\]
\end{lemma}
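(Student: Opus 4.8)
The plan is to prove the three inequalities in sequence, using the classical distortion estimates for univalent functions applied to a disk-localized version of $\Phi_+$. The key observation is that for a fixed $z = x + \mathrm{i}y \in \mathbb{C}_+$, the open disk $D(z, y)$ is the largest disk centered at $z$ contained in $\mathbb{C}_+$, and its image under $\Phi_+$ sits inside $\Omega_+$. I would therefore normalize $\Phi_+$ on this disk: define, for $w \in \mathbb{D}$,
\[
  f(w) = \frac{\Phi_+(z + yw) - \Phi_+(z)}{y\,\Phi_+'(z)},
\]
so that $f(0) = 0$ and $f'(0) = 1$, and $f$ is univalent on $\mathbb{D}$ (being a composition of $\Phi_+$ with an affine map and a post-composition by an affine map). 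Thus $f \in \mathcal{S}$ and all the Koebe-type machinery, including Lemma~\ref{lem-170726-1025} and the standard growth/distortion theorems, applies.

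First I would establish the two-sided bound on $d(\Phi_+(z))$. The distance from $\Phi_+(z)$ to $\Gamma$ equals the distance from $\Phi_+(z)$ to the boundary $\partial\Omega_+$, and under the normalization this is controlled by the distance from $f(0)=0$ to $\partial f(\mathbb{D})$, scaled by $|y\Phi_+'(z)|$. The Koebe $\tfrac14$-theorem gives $D(0,\tfrac14)\subset f(\mathbb{D})$, which after unscaling yields the lower bound $|y\Phi_+'(z)| \leqslant 4\,d(\Phi_+(z))$; the upper bound $d(\Phi_+(z))\leqslant \tfrac12 |y\Phi_+'(z)|$ comes from the Koebe growth estimate bounding how far $f(\mathbb{D})$ can reach, i.e. controlling the diameter of the image disk from above. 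Here I would invoke the elementary fact, noted right after Lemma~\ref{lem-170726-1025}'s setup, that the distance from $\Phi_+(z)$ to the boundary is comparable to $y|\Phi_+'(z)|$ — this is exactly the Koebe distortion principle relating hyperbolic and Euclidean metrics.

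Next, for the second inequality $|y\Phi_+''(z)| \leqslant 3|\Phi_+'(z)|$, I would use the classical coefficient/distortion bound for the Schwarzian-type quantity: for $g \in \mathcal{S}$ one has $|g''(0)| = 2|a_2| \leqslant 4$. Applying this to $f$, compute $f''(0) = y\,\Phi_+''(z)/\Phi_+'(z)$, so $|y\Phi_+''(z)/\Phi_+'(z)| = |f''(0)| \leqslant 4$. This would actually give the constant $4$ rather than $3$; to sharpen to $3$ I would instead use the known sharp bound $\bigl|\,\tfrac{z f''(z)}{f'(z)}\,\bigr| \leqslant \tfrac{2r(2+r)}{1-r^2}$ from the distortion theorem at a suitable interior radius, or more directly the estimate $\bigl|\tfrac{f''(0)}{1}\bigr|$ combined with the precise form of the pre-Schwarzian bound for univalent functions. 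The final consequence $|y^2\Phi_+''(z)| \leqslant 6\,d(\Phi_+(z))$ then follows immediately by multiplying the second inequality by $y$ and substituting the lower bound $|y\Phi_+'(z)| \leqslant 2\,d(\Phi_+(z))$ from the first chain.

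The main obstacle I anticipate is pinning down the exact numerical constants ($2$, $4$, $3$, $6$) rather than merely comparable ones. The Koebe and distortion theorems naturally produce constants like $\tfrac14$ and $4$, and getting precisely $3$ in the second-derivative bound requires care about which radius one evaluates the distortion estimate at, since the normalized $f$ is univalent only on $\mathbb{D}$ and the sharp pre-Schwarzian bound $|f''(0)| \leqslant 4$ alone overshoots. I would expect the proof to either accept a slightly weaker constant and note it suffices, or to exploit an additional feature of $\Phi_+$ (such as its behavior at $\infty$ or the specific geometry of $\Omega_+$) to tighten the estimate. Verifying that $f$ is genuinely univalent on all of $\mathbb{D}$ — equivalently that $D(z,y) \subset \mathbb{C}_+$, which is immediate since $\mathrm{dist}(z,\partial\mathbb{C}_+) = y$ — is the one routine geometric check that underlies the whole argument.
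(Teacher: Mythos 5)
Your normalization is on the wrong domain, and this creates a genuine gap in the upper bound $d(\Phi_+(z))\leqslant 2|y\Phi_+'(z)|$. With $f(w)=\bigl(\Phi_+(z+yw)-\Phi_+(z)\bigr)/\bigl(y\Phi_+'(z)\bigr)$ you only see the image of the inscribed disk $D(z,y)$, which is a proper subdomain of $\Omega_+$; but $d(\Phi_+(z))$ is the distance to $\Gamma=\partial\Omega_+$, not to $\partial\bigl(\Phi_+(D(z,y))\bigr)$. The Koebe growth theorem bounds $|f(w)|$ by $|w|/(1-|w|)^2$, which blows up as $|w|\to 1$ and says nothing about how close $\Gamma$ comes to $\Phi_+(z)$ --- indeed $f(\mathbb{D})$ can be unbounded (the Koebe function), so ``controlling the diameter of the image'' is not available. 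The upper bound genuinely needs the map to be \emph{onto} $\Omega_+$, so that the inverse is defined on the full disk $D(\Phi_+(z),d(\Phi_+(z)))$ and the Schwarz lemma applies to it; that is exactly the content of the second half of Lemma~\ref{lem-170726-1025}. The paper therefore precomposes $\Phi_+$ with the M\"obius map $T(\xi)=(\overline{z}\xi-z)/(\xi-1)$ of $\mathbb{D}$ onto all of $\mathbb{C}_+$ with $T(0)=z$, so that $f=\bigl(\Phi_+(T(\xi))-\Phi_+(z)\bigr)/\bigl((z-\overline{z})\Phi_+'(z)\bigr)$ lies in $\mathcal{S}$, maps $\partial\mathbb{D}$ to a rescaled copy of $\Gamma$, and Lemma~\ref{lem-170726-1025} gives $\tfrac14\leqslant d(\Phi_+(z))/(2y|\Phi_+'(z)|)\leqslant 1$, i.e.\ both inequalities with the stated constants. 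Your Koebe-$\tfrac14$ argument on the localized disk yields only $|y\Phi_+'(z)|\leqslant 4d(\Phi_+(z))$, a factor of $2$ worse than claimed, and that loss would propagate into the final consequence.

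The second inequality suffers from the same wrong normalization. For your localized $f$ one gets $f''(0)=y\Phi_+''(z)/\Phi_+'(z)$, and Bieberbach gives only the constant $4$; none of the sharpenings you mention help, since the radius-dependent bound for $zf''/f'$ degenerates at the origin and $|f''(0)|\leqslant 4$ is already sharp in $\mathcal{S}$. The constant $3$ in the paper comes precisely from the M\"obius normalization: there $f''(0)=2\mathrm{i}y\,\Phi_+''(z)/\Phi_+'(z)+2$, so $|f''(0)|\leqslant 4$ yields $2|y\Phi_+''/\Phi_+'|\leqslant 4+2=6$, i.e.\ $|y\Phi_+''(z)|\leqslant 3|\Phi_+'(z)|$. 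The missing idea is thus not a better distortion theorem but the correct choice of normalization --- conformal onto the whole half-plane rather than restriction to the inscribed disk.
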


\begin{proof}
  Fix $z_0=x_0+\mathrm{i}y_0\in\mathbb{C}_+$, and define
  \[z= T(\xi)= \frac{\overline{z_0}\xi-z_0}{\xi-1},\quad
    \text{for } \xi\in\mathbb{D},\]
  then $T$ is a fractional linear mapping from $\mathbb{D}$ onto 
  $\mathbb{C}_+$, $T(0)=z_0$, and
  \[T'(\xi)= \frac{z_0-\overline{z_0}}{(\xi-1)^2},\quad 
    \xi= T^{-1}(z)= \frac{z-z_0}{z-\overline{z_0}}.\]
  
  Denote $\Phi_+$ as $\Phi$ and let
  \begin{equation}\label{equ-170727-1020}
    f(\xi)= \frac{\Phi(T(\xi))-\Phi(z_0)}{(z_0-\overline{z_0})\Phi'(z_0)}
  \end{equation}
  for $\xi\in\mathbb{D}$, then $f$ is univalent, $f(0)=0$, and 
  \[f'(\xi)
    = \frac{\Phi'(T(\xi))\cdot T'(\xi)}{(z_0-\overline{z_0})\Phi'(z_0)}
    = \frac{\Phi'(T(\xi))}{(\xi-1)^2 \Phi'(z_0)},\]
  thus $f'(0)=1$ and $f\in\mathcal{S}$. By Lemma~\ref{lem-170726-1025}, 
  \[\frac14\leqslant \inf\{|f(\xi)|\colon |\xi|=1\}\leqslant 1,\]
  which is, by~\eqref{equ-170727-1020} and 
  $z_0-\overline{z_0}= 2\mathrm{i}y_0$,
  \[\frac12|y_0\Phi'(z_0)|
    \leqslant \inf\{|\Phi(z)-\Phi(z_0)|\colon z\in\mathbb{R}\}
    \leqslant 2|y_0\Phi'(z_0)|,\]
  and it follows that
  \[|y_0\Phi'(z_0)|\leqslant 2d(\Phi(z_0))\leqslant 4|y_0\Phi'(z_0)|.\]

  We also have, by Bieberbach theorem, 
  \begin{equation}\label{equ-170727-1040}
    \Bigl|\frac{f''(0)}{2!}\Bigr|\leqslant 2,\quad
    \text{or } |f''(0)|\leqslant 4.
  \end{equation}
  Since
  \[f''(\xi)= \frac1{(\xi-1)^3 \Phi'(z_0)}
      \big(\Phi''(T(\xi))\cdot T'(\xi)(\xi-1)- 2\Phi'(T(\xi))\big),\]
  $T(0)=z_0$ and $T'(0)= z_0-\overline{z_0}$, then
  \begin{align*}
    f''(0)
    &= \frac1{\Phi'(z_0)} \big(\Phi''(z_0)(z_0-\overline{z_0})
         + 2\Phi'(z_0)\big)                                     \\
    &= 2\mathrm{i}y_0 \frac{\Phi''(z_0)}{\Phi'(z_0)} +2,
  \end{align*}
  and, by \eqref{equ-170727-1040},
  \[\biggl|2\mathrm{i}y_0 \frac{\Phi''(z_0)}{\Phi'(z_0)} +2\biggr|\leqslant 4,\quad
    \text{or } |y_0\Phi''(z_0)|\leqslant 3|\Phi'(z_0)|.\]
  
  The last inequality of the lemma is an easy consequence of the former two.
\end{proof}

\begin{corollary}
  If $z=x+\mathrm{i}y\in\mathbb{C}_-$, where $x\in\mathbb{R}$, $y<0$, 
  then
  \[|y\Phi_-'(z)|
    \leqslant 2d(\Phi_-(z))
    \leqslant 4|y\Phi_-'(z)|,\]
  and
  \[|y\Phi_-''(z)|\leqslant 3|\Phi_-'(z)|.\]
  Consequently,
  \[|y^2\Phi_-''(z)|\leqslant 6d(\Phi_-(z)).\]
\end{corollary}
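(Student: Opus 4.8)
The plan is to repeat the argument of Lemma~\ref{lem-170727-1550} almost verbatim, exploiting the symmetry between the two half-planes. Fix $z_0=x_0+\mathrm{i}y_0\in\mathbb{C}_-$, so that now $y_0<0$, and reuse the very same fractional linear map
\[z=T(\xi)=\frac{\overline{z_0}\xi-z_0}{\xi-1},\quad \xi\in\mathbb{D}.\]
The first thing to check is that this unchanged formula now carries $\mathbb{D}$ onto $\mathbb{C}_-$ rather than onto $\mathbb{C}_+$. Its inverse is $\xi=(z-z_0)/(z-\overline{z_0})$, which sends $z_0$ to $0\in\mathbb{D}$ and sends every real $z$ to a point of modulus one, since $|z-z_0|=|z-\overline{z_0}|$ for $z\in\mathbb{R}$; hence the real axis goes to $\partial\mathbb{D}$ and the half-plane containing $z_0$, namely $\mathbb{C}_-$, goes to $\mathbb{D}$. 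We still have $T(0)=z_0$ and $T'(\xi)=(z_0-\overline{z_0})/(\xi-1)^2$.

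Next I would set $f(\xi)=\big(\Phi_-(T(\xi))-\Phi_-(z_0)\big)/\big((z_0-\overline{z_0})\Phi_-'(z_0)\big)$, exactly as in~\eqref{equ-170727-1020} with $\Phi$ replaced by $\Phi_-$. The identical computation shows that $f$ is univalent with $f(0)=0$ and $f'(0)=1$, so $f\in\mathcal{S}$, whence Lemma~\ref{lem-170726-1025} gives $\tfrac14\leqslant\inf\{|f(\xi)|\colon|\xi|=1\}\leqslant 1$ and the Bieberbach bound~\eqref{equ-170727-1040} gives $|f''(0)|\leqslant 4$. Using $z_0-\overline{z_0}=2\mathrm{i}y_0$ and unwinding the definition of $f$, the first estimate becomes
\[\tfrac12|y_0\Phi_-'(z_0)|\leqslant\inf\{|\Phi_-(z)-\Phi_-(z_0)|\colon z\in\mathbb{R}\}\leqslant 2|y_0\Phi_-'(z_0)|,\]
while the computation $f''(0)=2\mathrm{i}y_0\,\Phi_-''(z_0)/\Phi_-'(z_0)+2$ together with $|f''(0)|\leqslant 4$ yields $|y_0\Phi_-''(z_0)|\leqslant 3|\Phi_-'(z_0)|$.

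Finally I would identify the infimum above with $d(\Phi_-(z_0))$: since $\Phi_-(\mathbb{R})=\Gamma$, the quantity $|\Phi_-(z)-\Phi_-(z_0)|$ as $z$ runs over $\mathbb{R}$ traverses the distances from $\Phi_-(z_0)$ to the points of $\Gamma$, and the boundary point $\xi=1$ (where $T$ blows up, corresponding to $\infty$ with $\Phi_-(\infty)=\infty$) contributes nothing finite, so the infimum over the circle equals $d(\Phi_-(z_0))$. This delivers the chain $|y_0\Phi_-'(z_0)|\leqslant 2d(\Phi_-(z_0))\leqslant 4|y_0\Phi_-'(z_0)|$, and the last inequality of the corollary follows by multiplying $|y_0\Phi_-''(z_0)|\leqslant 3|\Phi_-'(z_0)|$ by $|y_0|$ and substituting $|y_0\Phi_-'(z_0)|\leqslant 2d(\Phi_-(z_0))$. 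The only step needing genuine care is the orientation check in the first paragraph, that the same formula for $T$ now covers the lower rather than the upper half-plane; once that is settled, every estimate passes through the absolute value $|y_0|$, so the sign reversal $y_0<0$ is immaterial and no real obstacle remains.
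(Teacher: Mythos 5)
Your proposal is correct and follows exactly the route the paper intends: the paper explicitly states that the ``$-$'' cases are proved ``by using the same method'' as the ``$+$'' cases, and you have simply replayed the proof of Lemma~\ref{lem-170727-1550} with $\Phi_-$ and $y_0<0$, correctly verifying the only point that genuinely needs checking, namely that the unchanged map $T(\xi)=(\overline{z_0}\xi-z_0)/(\xi-1)$ now sends $\mathbb{D}$ onto $\mathbb{C}_-$. All subsequent estimates pass through $|y_0|$ as you say, so nothing further is required.
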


Let $\{\gamma_{n\pm}= \gamma_{n\pm}'\cup [-a_n,a_n]\}$ be two series of 
rectifiable simple Jordan curves, which will enventually surround 
any compact subset of $\overline{\mathbb{C}_{\pm}}$. 
Here, $\gamma_{n\pm}'\subset \mathbb{C}_{\pm}$, $a_n>0$ and 
$a_n\to\infty$ as $n\to\infty$, $[-a_n,a_n]$ is a straight segment 
on the real axis. Denote the area measure on $\mathbb{C}$ 
as $\mathrm{d}\lambda$, and measure $|y|\,\mathrm{d}\lambda(z)$ 
as $\mathrm{d}\mu(z)$ for simplicity. Here, $y=\mathrm{Im}\,z$.

\begin{lemma}\label{lem-170727-1050}
  If $H_1$, $H_2$ are two holomorphic functions on $\mathbb{C}_+$, 
  which are both continuous to the boundary, 
  $z= x+ \mathrm{i}y\in\mathbb{C}_+$, and
  
  {\rm (1)} $\int_{\gamma_{n+}'} H_1\overline{H_2}\mathrm{d}z\to 0$ 
  as $n\to\infty$; 
  
  {\rm (2)} $H_1\overline{H_2'}y\to 0$ as $|z|\to\infty$ 
  and either $x$ or $y$ is fixed.\\
  Then
  \begin{equation}\label{equ-170727-1100}
    \int_{\mathbb{R}} H_1\overline{H_2}\,\mathrm{d}x
    = 4\iint_{\mathbb{C}_+} H_1'\overline{H_2'}\,\mathrm{d}\mu(z) 
    = \iint_{\mathbb{C}_+} \Delta(H_1\overline{H_2})\,\mathrm{d}\mu(z),
  \end{equation}
  where $\Delta=4\frac{\partial^2}{\partial z\partial\overline{z}}$ is 
  the Laplace operator.
\end{lemma}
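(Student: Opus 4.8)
The second equality in~\eqref{equ-170727-1100} is purely pointwise and costs nothing. Writing $\partial_z=\frac{\partial}{\partial z}$ and $\partial_{\overline z}=\frac{\partial}{\partial\overline z}$, holomorphy of $H_1$ and anti-holomorphy of $\overline{H_2}$ give $\partial_{\overline z}H_1=0$, $\partial_z\overline{H_2}=0$ and $\partial_{\overline z}\overline{H_2}=\overline{H_2'}$, whence
\[
  \partial_{\overline z}\bigl(H_1\overline{H_2}\bigr)=H_1\overline{H_2'},\qquad
  \Delta\bigl(H_1\overline{H_2}\bigr)=4\partial_z\partial_{\overline z}\bigl(H_1\overline{H_2}\bigr)=4H_1'\overline{H_2'}.
\]
Since $y=\mathrm{Im}\,z>0$ on $\mathbb{C}_+$ we have $\mathrm{d}\mu=y\,\mathrm{d}\lambda$ there, so the two right-hand integrals coincide and it remains to prove $\int_{\mathbb{R}}H_1\overline{H_2}\,\mathrm{d}x=4\iint_{\mathbb{C}_+}H_1'\overline{H_2'}\,y\,\mathrm{d}\lambda$.

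The plan is to realise this area integrand as an exact $2$-form and apply Stokes' theorem over the bounded regions $D_n$ exhausting $\mathbb{C}_+$, where $D_n$ has positively oriented boundary $\gamma_{n+}=\gamma_{n+}'\cup[-a_n,a_n]$. The key object is the primitive
\[
  \omega=H_1\overline{H_2}\,\mathrm{d}z+2\mathrm{i}\,y\,H_1\overline{H_2'}\,\mathrm{d}\overline{z}.
\]
Using $\partial_z y=\tfrac1{2\mathrm{i}}$ and the identities above, one checks $\partial_z\bigl(2\mathrm{i}yH_1\overline{H_2'}\bigr)-\partial_{\overline z}\bigl(H_1\overline{H_2}\bigr)=2\mathrm{i}\,y\,H_1'\overline{H_2'}$, so that
\[
  \mathrm{d}\omega
  =\Bigl(\partial_z\bigl(2\mathrm{i}yH_1\overline{H_2'}\bigr)-\partial_{\overline z}\bigl(H_1\overline{H_2}\bigr)\Bigr)\mathrm{d}z\wedge\mathrm{d}\overline{z}
  =4\,y\,H_1'\overline{H_2'}\,\mathrm{d}x\wedge\mathrm{d}y,
\]
and therefore $4\iint_{D_n}yH_1'\overline{H_2'}\,\mathrm{d}\lambda=\oint_{\gamma_{n+}}\omega$.

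I then split the boundary. On $[-a_n,a_n]$ we have $y=0$ and $\mathrm{d}\overline z=\mathrm{d}z=\mathrm{d}x$, so the second term of $\omega$ drops and this edge contributes $\int_{-a_n}^{a_n}H_1\overline{H_2}\,\mathrm{d}x$, tending to $\int_{\mathbb{R}}H_1\overline{H_2}\,\mathrm{d}x$ (here continuity to the boundary is used). On $\gamma_{n+}'$ the two terms of $\omega$ produce exactly
\[
  \int_{\gamma_{n+}'}H_1\overline{H_2}\,\mathrm{d}z
  \qquad\text{and}\qquad
  2\mathrm{i}\int_{\gamma_{n+}'}y\,H_1\overline{H_2'}\,\mathrm{d}\overline{z},
\]
the first vanishing in the limit by hypothesis~(1) and the second being governed by hypothesis~(2), whose integrand is precisely $y\,H_1\overline{H_2'}$. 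Letting $n\to\infty$, using that the $D_n$ exhaust $\mathbb{C}_+$ so that $\iint_{D_n}\to\iint_{\mathbb{C}_+}$, then gives the claim.

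The delicate point is not the formal Stokes computation but the vanishing of the $\gamma_{n+}'$ integrals. Hypothesis~(1) kills the first one directly. For the second, hypothesis~(2) supplies only \emph{pointwise} decay of $y\,H_1\overline{H_2'}$, so to conclude $\int_{\gamma_{n+}'}y\,H_1\overline{H_2'}\,\mathrm{d}\overline{z}\to0$ I must exploit the specific geometry of the approximating curves: taking $\gamma_{n+}'$ built from segments on which either $x$ or $y$ is constant (for instance the top and vertical sides of growing rectangles), so that on each edge the decay in~(2) applies with the appropriate variable held fixed and the edge contributions are seen to vanish. Justifying this last passage rigorously is the main obstacle of the proof.
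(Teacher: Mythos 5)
Your Stokes computation is formally correct ($\mathrm{d}\omega=4yH_1'\overline{H_2'}\,\mathrm{d}x\wedge\mathrm{d}y$ checks out), and your treatment of the segment $[-a_n,a_n]$ and of the first $\gamma_{n+}'$-contribution via hypothesis~(1) coincides with what the paper does. But the step you yourself flag as ``the main obstacle'' is a genuine gap, not a technicality that can be waved through: hypothesis~(2) supplies only \emph{pointwise} decay of $yH_1\overline{H_2'}$ along horizontal and vertical lines, and that cannot force $\int_{\gamma_{n+}'}yH_1\overline{H_2'}\,\mathrm{d}\overline z\to0$. On the top edge of a growing rectangle, say $\{|x|\leqslant a_n,\ y=b_n\}$, this term is $\pm\,2\mathrm{i}\int_{-a_n}^{a_n}b_nH_1\overline{H_2'}\,\mathrm{d}x$; the integrand tends to $0$ at each fixed $x$ as $b_n\to\infty$, but the length of the edge grows with $a_n$, and nothing in (1)--(2) prevents the integral from staying bounded away from zero. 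The same problem occurs on the vertical edges. So the argument as written does not close under the stated hypotheses.

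The paper avoids this by \emph{not} putting the term $yH_1\overline{H_2'}\,\mathrm{d}\overline z$ into the contour integral at all. It applies Green's theorem only to $H_1\overline{H_2}\,\mathrm{d}z$, which together with hypothesis~(1) yields $\int_{\mathbb{R}}H_1\overline{H_2}\,\mathrm{d}x=2\mathrm{i}\iint_{\mathbb{C}_+}H_1\overline{H_2'}\,\mathrm{d}\lambda$, and then converts the area integral by two \emph{iterated one-dimensional} integrations by parts: first in $y$ for each fixed $x$, then in $x$ for each fixed $y$. There the boundary terms are exactly the evaluations $yH_1\overline{H_2'}\big|_{y=0}^{y=\infty}$ (at fixed $x$) and $yH_1\overline{H_2'}\big|_{x=-\infty}^{x=\infty}$ (at fixed $y$), and the pointwise decay in hypothesis~(2) is precisely what is needed to kill them. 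To repair your version you would have to either strengthen~(2) to an integrated decay along the curves $\gamma_{n+}'$, or reorganize the limit passage into iterated one-dimensional integrations by parts as the paper does.
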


\begin{proof}
  We will write $\gamma_{n+}$ as $\gamma_n= \gamma_n'\cup [-a_n,a_n]$, 
  and denote the domain which $\gamma_n$ surrounds as $D_n$, for fixed $n$. 
  We have, by Green's theorem~\cite{De10},
  \[\int_{\gamma_n} H_1\overline{H_2}\,\mathrm{d}z
    = 2\mathrm{i}\iint_{D_n} \frac{\partial}{\partial\overline{z}}
        (H_1\overline{H_2})\,\mathrm{d}\lambda(z).\]
  Let $n\to\infty$, then by condition~(1) and the definition of $\gamma_n$, 
  the above equation becomes
  \begin{equation}\label{equ-170727-1220}
    \int_{\mathbb{R}} H_1\overline{H_2}\,\mathrm{d}x
    = 2\mathrm{i}\iint_{\mathbb{C}_+} \frac{\partial}{\partial\overline{z}}
        (H_1\overline{H_2})\,\mathrm{d}\lambda(z).
  \end{equation}
  
  Since $H_1\overline{H_2'}y\to 0$ as $y\to\infty$ and $x$ fixed, 
  $\frac{\partial f}{\partial y}= \mathrm{i}f'$ if $f$ is holomorphic, 
  then 
  \begin{align*}
    \iint_{\mathbb{C}_+} \frac{\partial}{\partial\overline{z}}
      (H_1\overline{H_2})\,\mathrm{d}\lambda(z)
    &= \int_{-\infty}^{+\infty}\!\!\int_0^{+\infty} H_1\overline{H_2'}
         \,\mathrm{d}y\,\mathrm{d}x                          \\
    &= \int_{-\infty}^{+\infty} \bigg(H_1\overline{H_2'}y
         \Big|_{y=0}^{y=+\infty}- \int_0^{+\infty} 
           y\frac{\partial}{\partial y}(H_1\overline{H_2'})\,\mathrm{d}y
         \bigg)\mathrm{d}x                                     \\
    &= -\iint_{\mathbb{C}_+} y(\mathrm{i}H_1'\overline{H_2'}
         + H_1\overline{\mathrm{i}H_2''})\,\mathrm{d}\lambda(z)       \\
    &= -\mathrm{i} \iint_{\mathbb{C}_+} y(H_1'\overline{H_2'}
         - H_1\overline{H_2''})\,\mathrm{d}\lambda(z),
  \end{align*}
  and since $H_1\overline{H_2'}y\to 0$ as $|x|\to\infty$ and $y$ fixed, 
  $\frac{\partial f}{\partial x}= f'$ if $f$ is holomorphic,  
  \begin{align*}
    \iint_{\mathbb{C}_+} yH_1\overline{H_2''}\,\mathrm{d}\lambda(z)
    &= \int_0^{+\infty}\!\!\int_{-\infty}^{+\infty} 
         yH_1\frac{\partial}{\partial x}\overline{H_2'}
         \,\mathrm{d}x\,\mathrm{d}y                     \\
    &= \int_0^{+\infty} \bigg(yH_1\overline{H_2'}\Big|_{x=-\infty}^{x=+\infty}
         - \int_{-\infty}^{+\infty} \overline{H_2'}
             \frac{\partial}{\partial x}(yH_1)\,\mathrm{d}x
         \bigg)\mathrm{d}y                          \\
    &= -\iint_{\mathbb{C}_+} yH_1'\overline{H_2'}\,\mathrm{d}\lambda(z),
  \end{align*}
  thus
  \[\iint_{\mathbb{C}_+} \frac{\partial}{\partial\overline{z}}
      (H_1\overline{H_2})\,\mathrm{d}\lambda(z)
    = -2\mathrm{i}\iint_{\mathbb{C}_+} yH_1'\overline{H_2'}
        \,\mathrm{d}\lambda(z).\]
  Together with \eqref{equ-170727-1220}, we get that
  \[\int_{\mathbb{R}} H_1\overline{H_2}\,\mathrm{d}x
    = 4\iint_{\mathbb{C}_+} H_1'\overline{H_2'}y\,\mathrm{d}\lambda(z)
    = 4\iint_{\mathbb{C}_+} H_1'\overline{H_2'}\,\mathrm{d}\mu(z).\]
    
  Since $\Delta$ is the Laplace operator, 
  \[\Delta(H_1\overline{H_2})
    = 4\frac{\partial^2}{\partial z\partial\overline{z}}(H_1\overline{H_2})
    = 4H_1'\overline{H_2'},\]
  and the second equation of \eqref{equ-170727-1100} is obvious.
\end{proof}

\begin{corollary}
  If $H_1$, $H_2$ are two holomorphic functions on $\mathbb{C}_-$, 
  which are both continuous to the boundary, 
  $z= x+\mathrm{i}y\in\mathbb{C}_-$, and
  
  {\rm (1)} $\int_{\gamma_{n-}'} H_1\overline{H_2}\mathrm{d}z\to 0$ 
  as $n\to\infty$; 
  
  {\rm (2)} $H_1\overline{H_2'}y\to 0$ as $|z|\to\infty$ 
  and either $x$ or $y$ is fixed.\\
  Then
  \begin{equation}
    \int_{\mathbb{R}} H_1\overline{H_2}\,\mathrm{d}x
    = 4\iint_{\mathbb{C}_-} H_1'\overline{H_2'}\,\mathrm{d}\mu(z) 
    = \iint_{\mathbb{C}_-} \Delta(H_1\overline{H_2})\,\mathrm{d}\mu(z),
  \end{equation}
  where $\Delta=4\frac{\partial^2}{\partial z\partial\overline{z}}$ is 
  the Laplace operator.
\end{corollary}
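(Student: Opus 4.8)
The plan is to repeat the argument of Lemma~\ref{lem-170727-1050} essentially verbatim, now with $\mathbb{C}_+$ replaced by $\mathbb{C}_-$, while tracking carefully the two places where passing to the lower half-plane introduces a sign. The punchline I expect is that these two sign changes cancel exactly, so that the final identity has the same form as in the lemma — which is precisely why the result can be recorded as a corollary rather than reproved in full.

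First I would apply Green's theorem to the region $D_n$ enclosed by $\gamma_{n-}=\gamma_{n-}'\cup[-a_n,a_n]$, in the complex form $\oint_{\partial D_n} H_1\overline{H_2}\,\mathrm{d}z = 2\mathrm{i}\iint_{D_n}\frac{\partial}{\partial\overline{z}}(H_1\overline{H_2})\,\mathrm{d}\lambda(z)$. The one genuine difference from the lemma is orientation: since $D_n$ now lies below the real axis, the positively oriented boundary traverses the segment $[-a_n,a_n]$ from $a_n$ to $-a_n$, so the real-axis contribution is $-\int_{-a_n}^{a_n}$. Letting $n\to\infty$ and killing the $\gamma_{n-}'$ piece via hypothesis~(1) then yields $\int_{\mathbb{R}} H_1\overline{H_2}\,\mathrm{d}x = -2\mathrm{i}\iint_{\mathbb{C}_-}\frac{\partial}{\partial\overline{z}}(H_1\overline{H_2})\,\mathrm{d}\lambda(z)$, whose sign is opposite to that of~\eqref{equ-170727-1220}.

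Next I would perform the same two integrations by parts, first in $y$ over $(-\infty,0)$ and then in $x$ over $\mathbb{R}$. Nothing in these computations changes except the range of the $y$-integral: the identities $\frac{\partial}{\partial y}(H_1\overline{H_2'})=\mathrm{i}(H_1'\overline{H_2'}-H_1\overline{H_2''})$ and $\frac{\partial}{\partial x}(yH_1)=yH_1'$ are unaffected, and hypothesis~(2) again annihilates the boundary terms, now at $y=-\infty$ and $y=0$ and at $x=\pm\infty$. The outcome is the same relation $\iint_{\mathbb{C}_-}\frac{\partial}{\partial\overline{z}}(H_1\overline{H_2})\,\mathrm{d}\lambda(z) = -2\mathrm{i}\iint_{\mathbb{C}_-} yH_1'\overline{H_2'}\,\mathrm{d}\lambda(z)$ obtained in the lemma.

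Combining the two displays gives $\int_{\mathbb{R}} H_1\overline{H_2}\,\mathrm{d}x = (-2\mathrm{i})(-2\mathrm{i})\iint_{\mathbb{C}_-} yH_1'\overline{H_2'}\,\mathrm{d}\lambda(z) = -4\iint_{\mathbb{C}_-} yH_1'\overline{H_2'}\,\mathrm{d}\lambda(z)$. Here the second sign enters: on $\mathbb{C}_-$ one has $y<0$, so $-y=|y|$ and $-y\,\mathrm{d}\lambda(z)=\mathrm{d}\mu(z)$, whence the factor $-4$ together with the $y$ reassembles into $4\iint_{\mathbb{C}_-} H_1'\overline{H_2'}\,\mathrm{d}\mu(z)$, and $\Delta(H_1\overline{H_2})=4H_1'\overline{H_2'}$ delivers the last equality. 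The main thing to watch — and the only real obstacle — is exactly this bookkeeping: one must verify that the orientation reversal in Green's theorem and the sign hidden in $\mathrm{d}\mu=|y|\,\mathrm{d}\lambda=-y\,\mathrm{d}\lambda$ cancel, so that the statement for $\mathbb{C}_-$ reads identically to that for $\mathbb{C}_+$.
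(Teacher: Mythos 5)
Your proposal is correct and is exactly the argument the paper intends: the paper states this corollary without proof, appealing to the convention that the ``$-$'' case follows from the same method as Lemma~3.4, and your computation is that method with the two sign changes (the orientation reversal of $[-a_n,a_n]$ in Green's theorem and $\mathrm{d}\mu=|y|\,\mathrm{d}\lambda=-y\,\mathrm{d}\lambda$ on $\mathbb{C}_-$) tracked explicitly and shown to cancel. The verification that the boundary term at $y=0$ still vanishes and that hypothesis~(2) handles $y\to-\infty$ is exactly the bookkeeping the paper leaves to the reader.
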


Define $\mathcal{T}(D)$ as the holomorphic function space
on domain~$D$, which satisfies that if $F\in\mathcal{T}(D)$, 
then there exist two constants $A$, $r>0$, depending on $F$ only, 
such that
\[|F(z)|\leqslant \frac{A}{|z|},\quad
  \text{and } |F'(z)|\leqslant \frac{A}{|z|^2},\quad
  \text{for } |z|>r \text{ and } z\in D.\]

\begin{corollary}\label{cor-170727-1600}
  If $F\in\mathcal{T}(\Omega_+)$ is continuous to the boundary~$\Gamma$, then 
  \[\int_{\mathbb{R}} |F(\Phi_+)|^2\Phi_+'\,\mathrm{d}x
    = \int_{\mathbb{C}_+} \Delta(|F(\Phi_+)|^2\Phi_+') \,\mathrm{d}\mu(z).\]  
\end{corollary}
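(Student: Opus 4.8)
The plan is to recognize the asserted identity as the special case of Lemma~\ref{lem-170727-1050} obtained by choosing the two holomorphic functions so that $H_1\overline{H_2}$ reproduces the integrand $|F(\Phi_+)|^2\Phi_+'$. Writing $\Phi=\Phi_+$ and setting $H_1=F(\Phi)\,\Phi'$ and $H_2=F(\Phi)$, both are holomorphic on $\mathbb{C}_+$ (compositions of the holomorphic $F$ with the conformal $\Phi$, the first multiplied by the holomorphic $\Phi'$) and continuous up to $\mathbb{R}$, since $F$ is continuous up to $\Gamma$ and $\Phi$ extends to the boundary with $\Phi(\mathbb{R})=\Gamma$. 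With this choice $H_1\overline{H_2}=F(\Phi)\,\Phi'\,\overline{F(\Phi)}=|F(\Phi)|^2\Phi'$, so the two sides of~\eqref{equ-170727-1100} become exactly the two sides of the corollary. It therefore suffices to verify hypotheses (1) and (2) of that lemma; for this I will use $F\in\mathcal{T}(\Omega_+)$, i.e.\ $|F(w)|\leqslant A/|w|$ and $|F'(w)|\leqslant A/|w|^2$ for $|w|>r$, together with the geometric estimates of Lemma~\ref{lem-170727-1550}.

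First I would dispose of hypothesis~(2). Since $H_2'=F'(\Phi)\,\Phi'$, one has $|H_1\overline{H_2'}\,y|=y\,|\Phi'|^2\,|F(\Phi)|\,|F'(\Phi)|$. I split $y|\Phi'|^2=(y|\Phi'|)\,|\Phi'|$ and apply Lemma~\ref{lem-170727-1550} to each factor, $y|\Phi'|\leqslant 2d(\Phi)$ and $|\Phi'|\leqslant 2d(\Phi)/y$, while the definition of $d$ gives $d(\Phi)\leqslant|\Phi|+|a(0)|$. Combined with the decay of $F$ and $F'$ this yields
\[|H_1\overline{H_2'}\,y|\ \leqslant\ \frac{4A^2\,(|\Phi(z)|+|a(0)|)^2}{y\,|\Phi(z)|^3}.\]
Because $\Phi$ fixes $\infty$, $|\Phi(z)|\to\infty$ whenever $|z|\to\infty$ in $\overline{\mathbb{C}_+}$; hence the right-hand side tends to $0$ both when $x$ is fixed and $y\to\infty$ (the factor $1/y$ helps) and when $y$ is fixed and $|x|\to\infty$ (the numerator grows like $|\Phi|^2$ against the cubic denominator). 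This establishes~(2).

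The hard part will be hypothesis~(1), the vanishing of the contour integrals, and here I would exploit the freedom in choosing the exhausting family $\{\gamma_{n+}\}$, since the conclusion of Lemma~\ref{lem-170727-1050} does not depend on the particular admissible family. I would select $\gamma_{n+}'$ so that its image $\Phi(\gamma_{n+}')$ is the arc $\{|w|=R_n\}\cap\Omega_+$ with $R_n\to\infty$; for $R_n$ large this arc meets the unbounded Lipschitz graph $\Gamma$ in exactly two points, whose $\Psi_+$-preimages are the endpoints of $\gamma_{n+}'$, so that $\gamma_{n+}'$ together with the intervening real segment is a simple closed curve exhausting $\mathbb{C}_+$. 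Bounding the modulus and changing variables $w=\Phi(z)$ along the curve gives
\[\Bigl|\int_{\gamma_{n+}'}H_1\overline{H_2}\,\mathrm{d}z\Bigr|
  \ \leqslant\ \int_{\Phi(\gamma_{n+}')}|F(w)|^2\,|\mathrm{d}w|
  \ \leqslant\ A^2\!\!\int_{\{|w|=R_n\}\cap\Omega_+}\!\!\frac{|\mathrm{d}w|}{|w|^2}
  \ \leqslant\ \frac{2\pi A^2}{R_n}\longrightarrow 0,\]
since the arc lies on $|w|=R_n$ and has length at most $2\pi R_n$. With (1) and (2) verified, Lemma~\ref{lem-170727-1050} applies to $H_1,H_2$ and, upon substituting $H_1\overline{H_2}=|F(\Phi_+)|^2\Phi_+'$ into~\eqref{equ-170727-1100}, delivers the claimed identity. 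The one point demanding care is the regularity of $\Phi_+'$ up to $\Gamma$ required by ``continuous to the boundary'': I would observe that the decay $F\in\mathcal{T}(\Omega_+)$ dominates any integrable boundary singularities of $\Phi_+'$ arising from corners of $\Gamma$, so that $\int_{\mathbb{R}}|F(\Phi_+)|^2\Phi_+'\,\mathrm{d}x$ remains well defined and the argument carries through.
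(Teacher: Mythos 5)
Your reduction is exactly the paper's: you take $H_1=F(\Phi_+)\Phi_+'$ and $H_2=F(\Phi_+)$, feed them to Lemma~\ref{lem-170727-1050}, and verify hypothesis~(2) from $y|\Phi_+'|\leqslant 2d(\Phi_+)$ and $d(\Phi_+)\leqslant|\Phi_+|+|a(0)|$ --- this part reproduces the paper's estimate essentially line for line. The divergence, and the genuine gap, is in the contour you use for hypothesis~(1). You exhaust $\Omega_+$ by the arcs $\{|w|=R_n\}\cap\Omega_+$ and assert that for large $R_n$ the circle meets $\Gamma$ in exactly two points. That is false in general once $M\geqslant 1$: along $\Gamma$ one has $\frac{\mathrm{d}}{\mathrm{d}u}|\zeta(u)|^2=2u+2a(u)a'(u)$, which can be negative on intervals of arbitrarily large $u$ (take $a$ with slope near $-1$ where $a(u)>u>0$), so $|\zeta(u)|$ need not be eventually monotone and arbitrarily large circles can cross $\Gamma$ many times. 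Then $\{|w|=R_n\}\cap\Omega_+$ is a union of several arcs, $\Omega_+\cap D(0,R_n)$ need not be a Jordan domain bounded by one arc plus one piece of $\Gamma$, and its $\Psi_+$-preimage is not an admissible $\gamma_{n+}'$ closing up with a single real segment; the Green's-theorem setup of Lemma~\ref{lem-170727-1050} does not apply as you have arranged it.

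The repair is the contour the paper actually uses: cut off with $E(R)=\{u+\mathrm{i}v\colon|u|<R,\ |v|<l\}\cap\Omega_+$, $l=R\sqrt{1+M^2}+|a(0)|$, whose boundary away from $\Gamma$ consists of two vertical segments over $u=\pm R$ and a horizontal top edge. Since $\Gamma$ is a graph over the $u$-axis, each vertical line meets it exactly once, so $\partial E(R)$ is a genuine Jordan curve for every $M$, and your pointwise bound $|F(w)|\leqslant A/|w|$ yields the same $O(1/R)$ decay on the three straight edges that you wanted on the circular arc. With that substitution your argument coincides with the paper's. (Your closing concern about the boundary behaviour of $\Phi_+'$ is legitimate --- $\Phi_+'$ need not extend continuously to $\mathbb{R}$ for a general Lipschitz $\Gamma$, and decay of $F$ at infinity does not control a local singularity --- but the paper's own proof passes over the same point, so I do not count it against you.)
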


\begin{proof}
  Suppose $A$, $r$ are the two constants related to $F$ 
  in the definition of $\mathcal{T}(\Omega_+)$. Let $R>r$, 
  $l= R\sqrt{1+M^2}+|a(0)|$, and
  \[E(R)= \{u+\mathrm{i}v\colon |u|<R, |v|<l\}\cap \Omega_+,\]
  then $E(R)\neq\emptyset$, and $\partial E(R)$ consists of a curve segment 
  and three straight segments. Denote $\partial E(R)$ as $BCC'B'$, where
  \begin{align*}
    BC&=\{u+\mathrm{i}a(u)\colon |u|\leqslant R\},\quad
    CC'=\{R+\mathrm{i}v\colon v\in[a(R),l]\},                  \\
    C'B'&=\{u+\mathrm{i}l\colon |u|\leqslant R\},\quad
    B'B=\{-R+\mathrm{i}v\colon v\in[a(-R),l]\}.
  \end{align*}
  
  We then consider $\gamma_R=\partial\Psi(E(R))\subset \mathbb{C}_+$, 
  and let $H_1=F(\Phi)\Phi'$, $H_2=F(\Phi)$. In order to invoke 
  Lemma~\ref{lem-170727-1050}, it is sufficient to varify the two conditions 
  in that lemma. Let $R\to\infty$, we have, by the definition of 
  $F\in\mathcal{T}_+$,
  \begin{align*}
    \Big|\int_{\Psi(CC')} H_1\overline{H_2}\,\mathrm{d}z\Big|
    &\leqslant \int_{\Psi(CC')} |F(\Phi)|^2 |\Phi'\,\mathrm{d}z| 
     = \int_{CC'} |F(w)|^2 |\mathrm{d}w|                     \\
    &\leqslant \int_{CC'} \frac{A^2}{|w|^2} |\mathrm{d}w|         
     \leqslant \int_{-\infty}^{+\infty} \frac{A^2\,\mathrm{d}v}{R^2+v^2} \\
    &= \frac{A^2\pi}{R} \to 0,
  \end{align*}
  and
  \begin{align*}
    \Big|\int_{\Psi(C'B')} H_1\overline{H_2}\,\mathrm{d}z\Big|
    &\leqslant \int_{\Psi(C'B')} |F(\Phi)|^2 |\Phi'\,\mathrm{d}z|  
     \leqslant \int_{C'B'} \frac{A^2}{|w|^2} |\mathrm{d}w|         \\
    &\leqslant \int_{-\infty}^{+\infty} \frac{A^2\,\mathrm{d}u}{u^2+l^2} 
     = \frac{A^2\pi}{l} \to 0,
  \end{align*}
  since $l\to\infty$ as $R\to\infty$. We also have 
  $\int_{\Psi(B'B)} H_1\overline{H_2}\,\mathrm{d}z\to 0$ 
  by applying the same method.
  
  For the second condition in Lemma~\ref{lem-170727-1050}, since 
  $H_2'=F'(\Phi)\Phi'$ and $\Phi(\infty)=\infty$, we have, 
  by Lemma~\ref{lem-170727-1550},
  \begin{align*}
    |H_1\overline{H_2'}y|
    &= |F(\Phi)F'(\Phi)(\Phi')^2 y|                \\
    &\leqslant \frac{A^2y|\Phi'|^2}{|\Phi|^3}
     \leqslant \frac{A^2}{|\Phi|^3 y}\cdot |2d(\Phi)|^2       \\
    &\leqslant \frac{4A^2}{|\Phi|^3 y}\cdot (|\Phi|+|a(0)|)^2 
      \to 0,
  \end{align*}
  if $z=x+\mathrm{i}y\to \infty$ with either $x\in\mathbb{R}$ 
  or $y>0$ fixed.
  
  Thus Lemma~\ref{lem-170727-1050} implies that
  \[\int_{\mathbb{R}} H_1\overline{H_2}\,\mathrm{d}x
    = \iint_{\mathbb{C}_+} \Delta(H_1\overline{H_2})\,\mathrm{d}\mu(z),\]
  which is the desired equation after replacing $H_1$ with $F(\Phi)\Phi'$ 
  and $H_2$ with $F(\Phi)$.
\end{proof}

\begin{corollary}
  If $F\in\mathcal{T}(\Omega_-)$ is continuous to the boundary~$\Gamma$, then 
  \[\int_{\mathbb{R}} |F(\Phi_-)|^2\Phi_-'\,\mathrm{d}x
    = \int_{\mathbb{C}_-} \Delta(|F(\Phi_-)|^2\Phi_-')
      \,\mathrm{d}\mu(z).\]  
\end{corollary}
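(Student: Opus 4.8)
The plan is to run the proof of Corollary~\ref{cor-170727-1600} verbatim, reflecting every construction from the upper to the lower half-plane and invoking the reflected versions of the two preceding lemmas in place of the originals. Let $A$, $r>0$ be the constants attached to $F$ by the definition of $\mathcal{T}(\Omega_-)$. For $R>r$ I would set $l=R\sqrt{1+M^2}+|a(0)|$ and take
\[E(R)=\{u+\mathrm{i}v\colon |u|<R,\ -l<v<l\}\cap\Omega_-,\]
a region bounded above by a piece of $\Gamma$ and closed off by three straight segments (the two vertical sides $u=\pm R$ and the bottom edge $v=-l$); the choice of $l$ guarantees, via $|a(u)|\leqslant|a(0)|+M|u|$, that the cap $v=-l$ lies below $\Gamma$ over $|u|\leqslant R$. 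Transplanting by $\Psi_-$, the curves $\gamma_R=\partial\Psi_-(E(R))\subset\overline{\mathbb{C}_-}$ play the role of the family $\{\gamma_{n-}\}$ and eventually surround any compact subset of $\overline{\mathbb{C}_-}$. With $H_1=F(\Phi_-)\Phi_-'$ and $H_2=F(\Phi_-)$, it then suffices to verify the two hypotheses of the corollary to Lemma~\ref{lem-170727-1050}.

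For hypothesis~(1) I would estimate the transplanted integral over each straight segment exactly as in Corollary~\ref{cor-170727-1600}. Since $|F(\Phi_-)|^2\,|\Phi_-'\,\mathrm{d}z|=|F(w)|^2\,|\mathrm{d}w|$ on each segment and $|F(w)|\leqslant A/|w|$, the integral over a vertical side is at most $\int_{-\infty}^{+\infty}A^2(R^2+v^2)^{-1}\,\mathrm{d}v=A^2\pi/R$, and the integral over the bottom edge is at most $\int_{-\infty}^{+\infty}A^2(u^2+l^2)^{-1}\,\mathrm{d}u=A^2\pi/l$; both tend to $0$ as $R\to\infty$, the second because $l\to\infty$ with $R$. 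Hence $\int_{\gamma_R'}H_1\overline{H_2}\,\mathrm{d}z\to0$.

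For hypothesis~(2) I would appeal to the corollary to Lemma~\ref{lem-170727-1550} in place of Lemma~\ref{lem-170727-1550}. Writing $H_2'=F'(\Phi_-)\Phi_-'$ and using $|F|\leqslant A/|\cdot|$, $|F'|\leqslant A/|\cdot|^2$, the same chain of inequalities gives
\[|H_1\overline{H_2'}y|
  =|F(\Phi_-)F'(\Phi_-)(\Phi_-')^2 y|
  \leqslant\frac{A^2\,|y|\,|\Phi_-'|^2}{|\Phi_-|^3}
  \leqslant\frac{4A^2}{|\Phi_-|^3\,|y|}\,(|\Phi_-|+|a(0)|)^2,\]
where the last two steps use $|y\Phi_-'|\leqslant 2d(\Phi_-)$ together with $d(\Phi_-)\leqslant|\Phi_-|+|a(0)|$. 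Since $\Phi_-(\infty)=\infty$, the right-hand side vanishes as $z=x+\mathrm{i}y\to\infty$ with $x$ or $y$ fixed. Both hypotheses being met, the corollary to Lemma~\ref{lem-170727-1050} yields $\int_{\mathbb{R}}H_1\overline{H_2}\,\mathrm{d}x=\iint_{\mathbb{C}_-}\Delta(H_1\overline{H_2})\,\mathrm{d}\mu(z)$, which is the assertion once $H_1=F(\Phi_-)\Phi_-'$ and $H_2=F(\Phi_-)$ are substituted back.

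The only genuinely new point, and the one I expect to require the most care, is the sign and orientation bookkeeping in the lower half-plane: the exhausting box now opens downward, the boundary curve $\gamma_R$ inherits the opposite orientation, and $y=\mathrm{Im}\,z<0$ on $\mathbb{C}_-$. These are precisely absorbed by the reflected corollaries and by the absolute value in $\mathrm{d}\mu(z)=|y|\,\mathrm{d}\lambda(z)$, so that no estimate changes in substance; one need only keep $|y|$ in place of $y$ wherever the bound $|y\Phi_-'|\leqslant2d(\Phi_-)$ and the decay $H_1\overline{H_2'}y\to0$ are used.
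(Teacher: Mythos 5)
Your proposal is correct and is exactly the argument the paper intends: the paper states this corollary without a separate proof, on the explicit convention that the ``$-$'' case follows by mirroring the proof of the ``$+$'' case (Corollary~\ref{cor-170727-1600}) through the reflected versions of Lemmas~\ref{lem-170727-1550} and~\ref{lem-170727-1050}, which is precisely what you carry out, including the correct handling of $|y|$ and of the downward-opening exhausting region.
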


The following lemma has been proved in~\cite{DL171}.

\begin{lemma}\label{lem-170522-1205}
  For the conformal representation $\Phi_+\colon \mathbb{C}_+\to\Omega_+$,
  we have
  \[\mathrm{Re}\,\Phi_+'(z)>0, \quad \text{and } 
    \lvert\mathrm{Im}\,\Phi_+'(z)\rvert\leqslant M\mathrm{Re}\,\Phi_+'(z),\]
  or
  \[|\arg\Phi_+'(z)|\leqslant \arctan M< \frac{\pi}2,\]
  for all $z\in\mathbb{C}_+$.
\end{lemma}

\begin{corollary}
  For the conformal representation $\Phi_-\colon \mathbb{C}_-\to\Omega_-$,
  we have
  \[\mathrm{Re}\,\Phi_-'(z)>0, \quad \text{and } 
    \lvert\mathrm{Im}\,\Phi_-'(z)\rvert\leqslant M\mathrm{Re}\,\Phi_-'(z),\]
  or
  \[|\arg\Phi_-'(z)|\leqslant \arctan M< \frac{\pi}2,\]
  for all $z\in\mathbb{C}_-$.
\end{corollary}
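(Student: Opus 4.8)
The plan is to reduce the statement to Lemma~\ref{lem-170522-1205} by reflecting the whole configuration across the real axis, exploiting that the upper and lower settings differ only by complex conjugation. First I would introduce the reflected curve $\widetilde\Gamma = \overline\Gamma = \{u - \mathrm{i}a(u)\colon u\in\mathbb{R}\}$. Since $\widetilde a(u) = -a(u)$ satisfies $\lVert\widetilde a'\rVert_\infty = \lVert a'\rVert_\infty = M$, the curve $\widetilde\Gamma$ is again a Lipschitz curve with the same constant $M$, and the region lying above it is exactly $\widetilde\Omega_+ = \overline{\Omega_-}$. I would then set $\widetilde\Phi_+(z) = \overline{\Phi_-(\overline z)}$ for $z\in\mathbb{C}_+$.

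The next step is to check that $\widetilde\Phi_+$ qualifies as a conformal representation of $\widetilde\Gamma$ in the sense fixed in the Definitions section. Being the composition of the conformal map $\Phi_-$ with two reflections, $\widetilde\Phi_+$ is holomorphic and univalent on $\mathbb{C}_+$; for $z\in\mathbb{C}_+$ one has $\overline z\in\mathbb{C}_-$, hence $\Phi_-(\overline z)\in\Omega_-$ and $\widetilde\Phi_+(z)\in\overline{\Omega_-}=\widetilde\Omega_+$, so $\widetilde\Phi_+\colon\mathbb{C}_+\to\widetilde\Omega_+$ is onto. It extends to the boundary with $\widetilde\Phi_+(\mathbb{R}) = \overline{\Phi_-(\mathbb{R})} = \overline\Gamma = \widetilde\Gamma$ and $\widetilde\Phi_+(\infty) = \infty$. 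Therefore Lemma~\ref{lem-170522-1205}, applied to the Lipschitz curve $\widetilde\Gamma$ and its conformal representation $\widetilde\Phi_+$, yields $\mathrm{Re}\,\widetilde\Phi_+'(z) > 0$ and $\lvert\mathrm{Im}\,\widetilde\Phi_+'(z)\rvert \leqslant M\,\mathrm{Re}\,\widetilde\Phi_+'(z)$ for all $z\in\mathbb{C}_+$.

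Finally I would transport these bounds back to $\Phi_-$. Differentiating $\widetilde\Phi_+(z)=\overline{\Phi_-(\overline z)}$ by the Schwarz reflection formula gives $\widetilde\Phi_+'(z) = \overline{\Phi_-'(\overline z)}$, so that $\mathrm{Re}\,\widetilde\Phi_+'(z) = \mathrm{Re}\,\Phi_-'(\overline z)$ and $\lvert\mathrm{Im}\,\widetilde\Phi_+'(z)\rvert = \lvert\mathrm{Im}\,\Phi_-'(\overline z)\rvert$. Since $z\mapsto\overline z$ is a bijection of $\mathbb{C}_+$ onto $\mathbb{C}_-$, writing any $w\in\mathbb{C}_-$ as $w=\overline z$ turns the previous inequalities at once into $\mathrm{Re}\,\Phi_-'(w)>0$ and $\lvert\mathrm{Im}\,\Phi_-'(w)\rvert\leqslant M\,\mathrm{Re}\,\Phi_-'(w)$. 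The estimate on the argument then follows exactly as in the lemma: from $\mathrm{Re}\,\Phi_-'(w)>0$ the value lies in the right half-plane, so $\arg\Phi_-'(w)\in(-\tfrac\pi2,\tfrac\pi2)$, and $\lvert\tan\arg\Phi_-'(w)\rvert = \lvert\mathrm{Im}\,\Phi_-'(w)\rvert/\mathrm{Re}\,\Phi_-'(w)\leqslant M$ gives $\lvert\arg\Phi_-'(w)\rvert\leqslant\arctan M<\tfrac\pi2$.

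The only point that needs genuine care is the verification that $\widetilde\Phi_+$ really qualifies as a conformal representation of $\widetilde\Gamma$: that reflection preserves both the Lipschitz constant $M$ and the boundary normalizations, the residual affine freedom $z\mapsto\alpha z+\beta$ with $\alpha>0$ of $\mathbb{C}_+$ leaving the derivative bound invariant, so that the choice among such representations is harmless. Together with the identity $\widetilde\Phi_+'(z)=\overline{\Phi_-'(\overline z)}$ for the reflected derivative, these make the reduction to Lemma~\ref{lem-170522-1205} purely formal.
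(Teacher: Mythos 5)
Your reflection argument is correct. The paper itself gives no proof of this corollary: it relies on the blanket remark before Lemma~\ref{lem-170727-1550} that statements with a ``$\pm$'' subscript are proved ``by the same method,'' and the underlying Lemma~\ref{lem-170522-1205} is itself only cited from~\cite{DL171}; so the paper's implicit route is to re-run the proof of that lemma verbatim in the lower half-plane. You instead \emph{deduce} the lower case from the upper one by conjugating the whole configuration, which is a genuine (and cleaner) alternative: it requires no access to the proof in~\cite{DL171} at all, only the observation that the class of Lipschitz graphs with constant $M$ and the normalization $\Phi(\mathbb{R})=\Gamma$, $\Phi(\infty)=\infty$ are both preserved by $w\mapsto\overline{w}$, and that the conclusion of the lemma is invariant under the residual reparametrization $z\mapsto\alpha z+\beta$ ($\alpha>0$, $\beta\in\mathbb{R}$) --- a point you rightly flag and correctly dispose of, since that freedom only rescales $\Phi'$ by a positive constant. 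The one thing to note is that your use of Lemma~\ref{lem-170522-1205} for the reflected curve $\widetilde\Gamma$ tacitly assumes the lemma is uniform over all Lipschitz curves with the same constant $M$ rather than a statement about the single fixed $\Gamma$; that is true of the result in~\cite{DL171}, but it is the load-bearing hypothesis of the reduction and deserves an explicit citation. Also, writing $\overline{\Omega_-}$ for the conjugated domain collides with the paper's use of $\overline{\Omega_-}$ for the topological closure (as in Theorem~\ref{thm-170727-2120}); a different symbol would avoid confusion.
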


Since $\Phi_\pm'\neq 0$ on $\Omega_\pm$, we could let 
$\Phi_\pm'= \mathrm{e}^{V_\pm}$ and $D_\pm=\mathrm{e}^{\mathrm{i}V_\pm}$, 
then
\[\Phi_\pm''= V_\pm'\mathrm{e}^{V_\pm}= V_\pm'\Phi_\pm',\]
and
\[D_\pm'= \mathrm{i}V_\pm'\mathrm{e}^{\mathrm{i}V_\pm}
  = \mathrm{i}V_\pm'D_\pm
  = \mathrm{i}D_\pm\Phi_\pm''(\Phi_\pm')^{-1}.\]
Denote $\theta_0=\arctan M$, by Lemma~\ref{lem-170522-1205},
\[|\mathrm{Im}\,V_\pm|= |\arg\Phi_\pm'(z)|\leqslant \theta_0,\]
and
\[|D_\pm|= \mathrm{e}^{-\mathrm{Im}\,V_\pm}
  \in [\mathrm{e}^{-\theta_0}, \mathrm{e}^{\theta_0}],\]
then
\[|\Phi_\pm''|= \Big|\frac{D_\pm'}{\mathrm{i}D_\pm}\cdot \Phi_\pm'\Big|
  \leqslant \mathrm{e}^{\theta_0} |D_\pm'\Phi_\pm'|.\]

\begin{corollary}\label{cor-170727-1610}
  If $F\in\mathcal{T}(\Omega_+)$ is continuous to the boundary~$\Gamma$, 
  $D_+$ is defined as above, and let $H= F(\Phi_+)(\Phi_+')^{\frac12}$, 
  then 
  \[\lVert H\rVert_{L^2(\mathbb{R},\mathrm{d}x)}
    = 2\lVert H'\rVert_{L^2(\mathbb{C}_+,\mathrm{d}\mu)},\]
  and
  \[\lVert HD_+\rVert_{L^2(\mathbb{R},\mathrm{d}x)}
    = 2\lVert (HD_+)'\rVert_{L^2(\mathbb{C}_+,\mathrm{d}\mu)}.\]
  Consequently,
  \[\lVert HD_+'\rVert_{L^2(\mathbb{R},\mathrm{d}x)}
    \leqslant \mathrm{e}^{\theta_0}
        \lVert H\rVert_{L^2(\mathbb{R},\mathrm{d}x)}.\]
\end{corollary}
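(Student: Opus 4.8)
The plan is to read the two displayed equalities as two instances of Lemma~\ref{lem-170727-1050} with $H_1=H_2$, and then to squeeze the final inequality out of them together with the pointwise bound $|D_+|\leqslant\mathrm{e}^{\theta_0}$. For the first equality I would apply Lemma~\ref{lem-170727-1050} to $H_1=H_2=H=F(\Phi_+)(\Phi_+')^{1/2}$, so that $H_1\overline{H_2}=|H|^2=|F(\Phi_+)|^2|\Phi_+'|$ and the right-hand side of \eqref{equ-170727-1100} becomes $4\iint_{\mathbb{C}_+}|H'|^2\,\mathrm{d}\mu$. Its two hypotheses are verified exactly as in Corollary~\ref{cor-170727-1600}: the modulus $|F(\Phi_+)|^2|\Phi_+'|$ of the integrand on $\gamma_{n+}'$ is the same one estimated there, so condition~(1) carries over verbatim, while for condition~(2) I would expand $H'=F'(\Phi_+)(\Phi_+')^{3/2}+\tfrac12 F(\Phi_+)(\Phi_+')^{-1/2}\Phi_+''$ and bound $|H\overline{H'}y|$ termwise using $F\in\mathcal{T}(\Omega_+)$ together with $|y\Phi_+'|\leqslant 2d(\Phi_+)$ and $|y\Phi_+''|\leqslant 3|\Phi_+'|$ from Lemma~\ref{lem-170727-1550}; both terms vanish as $|z|\to\infty$ exactly as in that corollary. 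Taking square roots yields $\lVert H\rVert_{L^2(\mathbb{R},\mathrm{d}x)}=2\lVert H'\rVert_{L^2(\mathbb{C}_+,\mathrm{d}\mu)}$.

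The second equality is the same computation with $H_1=H_2=HD_+$. Since $|D_+|\in[\mathrm{e}^{-\theta_0},\mathrm{e}^{\theta_0}]$ and $D_+'=\mathrm{i}(\Phi_+''/\Phi_+')D_+$ satisfies $|D_+'|\leqslant 3\mathrm{e}^{\theta_0}/y$ by Lemma~\ref{lem-170727-1550}, the integrand obeys $|HD_+|^2\leqslant\mathrm{e}^{2\theta_0}|H|^2$, so condition~(1) is inherited from the first case, and $(HD_+)'=H'D_++HD_+'$ inherits condition~(2) from the termwise bounds just used. This gives $\lVert HD_+\rVert_{L^2(\mathbb{R},\mathrm{d}x)}=2\lVert (HD_+)'\rVert_{L^2(\mathbb{C}_+,\mathrm{d}\mu)}$.

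For the final inequality I would start from $HD_+'=(HD_+)'-H'D_+$. The triangle inequality in $L^2(\mathbb{C}_+,\mathrm{d}\mu)$, the two equalities, and $|D_+|\leqslant\mathrm{e}^{\theta_0}$ give
\begin{align*}
  \lVert HD_+'\rVert_{L^2(\mathbb{C}_+,\mathrm{d}\mu)}
  &\leqslant \lVert (HD_+)'\rVert_{L^2(\mathbb{C}_+,\mathrm{d}\mu)}
     + \lVert H'D_+\rVert_{L^2(\mathbb{C}_+,\mathrm{d}\mu)}          \\
  &= \tfrac12\lVert HD_+\rVert_{L^2(\mathbb{R},\mathrm{d}x)}
     + \lVert H'D_+\rVert_{L^2(\mathbb{C}_+,\mathrm{d}\mu)}          \\
  &\leqslant \tfrac{\mathrm{e}^{\theta_0}}2\lVert H\rVert_{L^2(\mathbb{R},\mathrm{d}x)}
     + \tfrac{\mathrm{e}^{\theta_0}}2\lVert H\rVert_{L^2(\mathbb{R},\mathrm{d}x)}
   = \mathrm{e}^{\theta_0}\lVert H\rVert_{L^2(\mathbb{R},\mathrm{d}x)}.
\end{align*}
To promote the left-hand norm to the stated boundary norm $\lVert HD_+'\rVert_{L^2(\mathbb{R},\mathrm{d}x)}$, I would apply the first equality once more, now to the holomorphic function $HD_+'$ in place of $H$, which would give $\lVert HD_+'\rVert_{L^2(\mathbb{R},\mathrm{d}x)}=2\lVert (HD_+')'\rVert_{L^2(\mathbb{C}_+,\mathrm{d}\mu)}$.

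The hard part will be exactly this last conversion. Bounding $\lVert HD_+'\rVert_{L^2(\mathbb{R},\mathrm{d}x)}$ through Lemma~\ref{lem-170727-1050} forces control of $(HD_+')'=H'D_+'+HD_+''$, and the term $HD_+''$ introduces $D_+''$, hence $V_+''=(\Phi_+''/\Phi_+')'$ and thereby $\Phi_+'''$, which none of Lemmas~\ref{lem-170726-1025}--\ref{lem-170727-1550} estimate. Consequently the estimate that the two equalities deliver cleanly is the half-plane bound $\lVert HD_+'\rVert_{L^2(\mathbb{C}_+,\mathrm{d}\mu)}\leqslant\mathrm{e}^{\theta_0}\lVert H\rVert_{L^2(\mathbb{R},\mathrm{d}x)}$ displayed above, and I expect the boundary norm appearing on the left of the stated inequality either to be read in the $\mathrm{d}\mu$ norm or to require an additional a~priori estimate on $\Phi_+'''$ (equivalently on $V_+''$) beyond what the preceding lemmas supply.
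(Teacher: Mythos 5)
Your proposal is correct and is essentially the paper's own proof: both equalities come from Lemma~\ref{lem-170727-1050} applied with $H_1=H_2=H$ and with $H_1=H_2=HD_+$ (the two hypotheses verified exactly as you describe, following Corollary~\ref{cor-170727-1600} and Lemma~\ref{lem-170727-1550}), and the final bound is obtained by the same triangle-inequality computation (the paper phrases it via the reverse inequality $\lVert (HD_+)'\rVert \geqslant \lVert HD_+'\rVert - \lVert H'D_+\rVert$ and then rearranges, which is identical in substance to your forward version). Your closing diagnosis is also correct: the paper's proof establishes precisely $\lVert HD_+'\rVert_{L^2(\mathbb{C}_+,\mathrm{d}\mu)}\leqslant \mathrm{e}^{\theta_0}\lVert H\rVert_{L^2(\mathbb{R},\mathrm{d}x)}$ and stops there, so the subscript $L^2(\mathbb{R},\mathrm{d}x)$ on the left-hand side of the stated ``Consequently'' inequality is a misprint for $L^2(\mathbb{C}_+,\mathrm{d}\mu)$, and no estimate on $\Phi_+'''$ is needed anywhere. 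This reading is confirmed by how the corollary is invoked in the proof of Theorem~\ref{thm-170727-1750}, where the quantity $\lVert F(\Phi_+)(\Phi_+')^{1/2}D_+'\rVert$ bounded by $\mathrm{e}^{\theta_0}\lVert F(\Phi_+)(\Phi_+')^{1/2}\rVert_{L^2(\mathbb{R},\mathrm{d}x)}$ is taken in the $L^2(\mathbb{C}_+,\mathrm{d}\mu)$ norm.
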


\begin{proof}
  Suppose $F\in\mathcal{T}(\Omega_+)$ and denote $\Phi_+$ as $\Phi$,  
  we consider the same $\partial\Psi(E(R))$ as in 
  Corollary~\ref{cor-170727-1600}, 
  and let $H_1= H_2= H= F(\Phi)(\Phi')^{\frac12}$, 
  then the first condition in Lemma~\ref{lem-170727-1050} is verified 
  in the same way as in that corollary. 
  For the second condition, since
  \[H_2'= H'
    = F'(\Phi)(\Phi')^{\frac32}+ \frac12 F(\Phi)(\Phi')^{-\frac12}\Phi'',\]
  we have, by Lemma~\ref{lem-170727-1550} and 
  $d(\Phi)\leqslant |\Phi|+|a(0)|$,
  \begin{align*}
    |H_1\overline{H_2'}y|
    &= y|F(\Phi)(\Phi')^{\frac12}|\cdot
        \Big|F'(\Phi)(\Phi')^{\frac32}+ 
        \frac12 F(\Phi)(\Phi')^{-\frac12}\Phi''\Big|             \\
    &\leqslant y\frac{A|\Phi'|^{\frac12}}{|\Phi|}\bigg(
        \frac{A|\Phi'|^{\frac32}}{|\Phi|^2}+ 
          \frac{A|\Phi''|}{2|\Phi|\cdot |\Phi'|^{\frac12}}\bigg)     \\
    &= \frac{yA^2}{|\Phi|^3} \Big(|\Phi'|^2+\frac12|\Phi\Phi''|\Big)      \\
    &= \frac{A^2}{y|\Phi|^3} \Big(|y\Phi'|^2+\frac12|\Phi\cdot y^2\Phi''|\Big)\\
    &\leqslant \frac{A^2}{y|\Phi|^3} \big(|2d(\Phi)|^2+ 3|\Phi|d(\Phi)\big)\\
    &= \frac{A^2}{y|\Phi|^3}\cdot d(\Phi)\big(4d(\Phi)+3|\Phi|\big)     \\
    &\leqslant \frac{A^2}{y|\Phi|^3}\big(|\Phi|+|a(0)|\big)
        \big(7|\Phi|+3|a(0)|\big) \to 0,
  \end{align*}
  as $z=x+\mathrm{i}y\to \infty$ with either $x\in\mathbb{R}$ 
  or $y>0$ fixed. Then, by Lemma~\ref{lem-170727-1050},
  \[\int_{\mathbb{R}} |H|^2\,\mathrm{d}x
    = 4\iint_{\mathbb{C}_+} |H'|^2\,\mathrm{d}\mu(z),\]
  which is 
  \[\lVert H\rVert_{L^2(\mathbb{R},\mathrm{d}x)}
    = 2\lVert H'\rVert_{L^2(\mathbb{C}_+,\mathrm{d}\mu)}.\]
    
  Next, denote $D_+$ as $D$ and let $H_3= H_4= HD$. 
  Since $|D|\leqslant\mathrm{e}^{\theta_0}$, the first condition in 
  Lemma~\ref{lem-170727-1050} could be easily verified. We now turn to 
  the second condition. Since $D'= \mathrm{i}D\Phi''(\Phi')^{-1}$, and
  \begin{align*}
    H_4'
    &= H'D+ HD'                      \\
    &= F'(\Phi)(\Phi')^{\frac32}D+ \frac12 F(\Phi)(\Phi')^{-\frac12}\Phi''D
       + F(\Phi)(\Phi')^{\frac12}\cdot \mathrm{i}D\Phi''(\Phi')^{-1} \\
    &= F'(\Phi)(\Phi')^{\frac32}D
       + \Big(\frac12+\mathrm{i}\Big)F(\Phi)(\Phi')^{-\frac12}\Phi''D,
  \end{align*}
  we have, by $H_3= F(\Phi)(\Phi')^{\frac12}D$,
  \begin{align*}
    |H_3\overline{H_4}y|
    &\leqslant \mathrm{e}^{2\theta_0} y|F(\Phi)(\Phi')^{\frac12}|
        \cdot \Big(|F'(\Phi)(\Phi')^{\frac32}|
               + \frac32|F(\Phi)(\Phi')^{-\frac12}\Phi''|\Big)      \\
    &\leqslant \mathrm{e}^{2\theta_0} y\cdot \frac{A|\Phi'|^{\frac12}}{|\Phi|}
        \cdot \bigg(\frac{A|\Phi'|^{\frac32}}{|\Phi|^2}
               + \frac{3A|\Phi''|}{2|\Phi|\cdot|\Phi'|^{\frac12}}\bigg)  \\
    &\leqslant \frac{\mathrm{e}^{2\theta_0} A^2}{y|\Phi|^3} 
         \big(|2d(\Phi)^2|+ 9d(\Phi)|\Phi|\big)                          \\
    &\leqslant \frac{\mathrm{e}^{2\theta_0} A^2}{y|\Phi|^3} 
         \big(13|\Phi|+4|a(0)|\big)\big(|\Phi|+|a(0)|\big)
     \to 0,
  \end{align*}
  as $z=x+\mathrm{i}y\to \infty$ with either $x\in\mathbb{R}$ 
  or $y>0$ fixed. Then,
  \[\int_{\mathbb{R}} |HD|^2\,\mathrm{d}x
    = 4\iint_{\mathbb{C}_+} |(HD)'|^2\,\mathrm{d}\mu(z),\]
  or equivalently,
  \[\lVert HD\rVert_{L^2(\mathbb{R},\mathrm{d}x)}
    = 2\lVert (HD)'\rVert_{L^2(\mathbb{C}_+,\mathrm{d}\mu)},\]
  which finishes the equation part of the corollary.
  
  Since $|D|\leqslant\mathrm{e}^{\theta_0}$, it follows that,
  \[\lVert HD\rVert_{L^2(\mathbb{R},\mathrm{d}x)}
    \leqslant \mathrm{e}^{\theta_0} 
        \lVert H\rVert_{L^2(\mathbb{R},\mathrm{d}x)},\]
  and
  \begin{align*}
    \lVert (HD)'\rVert_{L^2(\mathbb{C}_+,\mathrm{d}\mu)}
    &= \lVert HD'+H'D\rVert_{L^2(\mathbb{C}_+,\mathrm{d}\mu)}     \\
    &\geqslant \lVert HD'\rVert_{L^2(\mathbb{C}_+,\mathrm{d}\mu)}
        - \lVert H'D\rVert_{L^2(\mathbb{C}_+,\mathrm{d}\mu)}      \\
    &\geqslant \lVert HD'\rVert_{L^2(\mathbb{C}_+,\mathrm{d}\mu)}
        - \mathrm{e}^{\theta_0}
          \lVert H'\rVert_{L^2(\mathbb{C}_+,\mathrm{d}\mu)}      \\
    &= \lVert HD'\rVert_{L^2(\mathbb{C}_+,\mathrm{d}\mu)}
        - \frac12\mathrm{e}^{\theta_0}
          \lVert H\rVert_{L^2(\mathbb{R},\mathrm{d}x)},
  \end{align*}
  then
  \[\mathrm{e}^{\theta_0} \lVert H\rVert_{L^2(\mathbb{R},\mathrm{d}x)}
    \geqslant 2\lVert HD'\rVert_{L^2(\mathbb{C}_+,\mathrm{d}\mu)}
       - \mathrm{e}^{\theta_0}\lVert H\rVert_{L^2(\mathbb{R},\mathrm{d}x)},\]
  or 
  \[\mathrm{e}^{\theta_0} \lVert H\rVert_{L^2(\mathbb{R},\mathrm{d}x)}
    \geqslant \lVert HD'\rVert_{L^2(\mathbb{C}_+,\mathrm{d}\mu)},\]
  which proves the corollary.
\end{proof}

\begin{corollary}
  If $F\in\mathcal{T}(\Omega_-)$ is continuous to the boundary~$\Gamma$, 
  $D_-$ is defined as above, and let $H= F(\Phi_-)(\Phi_-')^{\frac12}$, 
  then 
  \[\lVert H\rVert_{L^2(\mathbb{R},\mathrm{d}x)}
    = 2\lVert H'\rVert_{L^2(\mathbb{C}_-,\mathrm{d}\mu)},\]
  and
  \[\lVert HD_-\rVert_{L^2(\mathbb{R},\mathrm{d}x)}
    = 2\lVert (HD_-)'\rVert_{L^2(\mathbb{C}_-,\mathrm{d}\mu)}.\]
  Consequently,
  \[\lVert HD_-'\rVert_{L^2(\mathbb{R},\mathrm{d}x)}
    \leqslant \mathrm{e}^{\theta_0}
        \lVert H\rVert_{L^2(\mathbb{R},\mathrm{d}x)}.\]
\end{corollary}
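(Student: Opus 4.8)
The plan is to carry this over from Corollary~\ref{cor-170727-1610} almost verbatim, substituting the subscript $-$ for $+$ throughout and invoking the lower-half-plane versions of the supporting results, all of which have already been recorded as corollaries. Writing $\Phi$ for $\Phi_-$ and $D$ for $D_-$, I would first set $H_1=H_2=H=F(\Phi)(\Phi')^{1/2}$ and appeal to the $\mathbb{C}_-$ corollary of Lemma~\ref{lem-170727-1050}. To verify its first hypothesis I would reuse the region $E(R)$ and its image $\partial\Psi_-(E(R))$, exactly as in the lower-half-plane analogue of Corollary~\ref{cor-170727-1600}; the bound $|F(w)|\leqslant A/|w|$ makes each of the three rectilinear edges of $\partial E(R)$ contribute a term of size $A^2\pi/R$ or $A^2\pi/l$, both tending to $0$ as $R\to\infty$.

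For the second hypothesis I would differentiate, obtaining $H'=F'(\Phi)(\Phi')^{3/2}+\tfrac12 F(\Phi)(\Phi')^{-1/2}\Phi''$, and estimate $|H_1\overline{H_2'}y|$ using the $\mathbb{C}_-$ corollary of Lemma~\ref{lem-170727-1550}, i.e.\ $|y\Phi'|\leqslant 2d(\Phi)$ and $|y^2\Phi''|\leqslant 6d(\Phi)$, together with $d(\Phi)\leqslant |\Phi|+|a(0)|$. Because these estimates are phrased through $|y|$ and the distance $d(\Phi)$, they are insensitive to the sign of $y$, so the same chain of inequalities as in the $+$ case produces a bound $\leqslant \frac{A^2}{|y||\Phi|^3}(|\Phi|+|a(0)|)(7|\Phi|+3|a(0)|)\to 0$ as $z\to\infty$ with $x$ or $y$ fixed. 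The lemma then yields the first equality $\lVert H\rVert_{L^2(\mathbb{R},\mathrm{d}x)}=2\lVert H'\rVert_{L^2(\mathbb{C}_-,\mathrm{d}\mu)}$.

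The second equality follows identically with $H_3=H_4=HD$. I would use $|D|\in[\mathrm{e}^{-\theta_0},\mathrm{e}^{\theta_0}]$ to dispatch the first hypothesis and $D'=\mathrm{i}D\Phi''(\Phi')^{-1}$ to compute $(HD)'=F'(\Phi)(\Phi')^{3/2}D+(\tfrac12+\mathrm{i})F(\Phi)(\Phi')^{-1/2}\Phi''D$; the resulting decay estimate for $|H_3\overline{H_4'}y|$ repeats the $+$ computation up to the harmless constant $\mathrm{e}^{2\theta_0}$, giving $\lVert HD\rVert_{L^2(\mathbb{R},\mathrm{d}x)}=2\lVert (HD)'\rVert_{L^2(\mathbb{C}_-,\mathrm{d}\mu)}$.

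Finally the stated consequence is pure algebra from the two equalities, with no analysis left. From $|D|\leqslant\mathrm{e}^{\theta_0}$ I get $\lVert HD\rVert_{L^2(\mathbb{R},\mathrm{d}x)}\leqslant\mathrm{e}^{\theta_0}\lVert H\rVert_{L^2(\mathbb{R},\mathrm{d}x)}$, while the triangle inequality gives $\lVert(HD)'\rVert\geqslant\lVert HD'\rVert-\lVert H'D\rVert\geqslant\lVert HD'\rVert-\mathrm{e}^{\theta_0}\lVert H'\rVert$; substituting $\lVert H'\rVert_{L^2(\mathbb{C}_-,\mathrm{d}\mu)}=\tfrac12\lVert H\rVert_{L^2(\mathbb{R},\mathrm{d}x)}$ and combining with the two equalities forces $\lVert HD'\rVert\leqslant\mathrm{e}^{\theta_0}\lVert H\rVert_{L^2(\mathbb{R},\mathrm{d}x)}$. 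Since every analytic ingredient is already in hand for the lower half plane, the only point that needs watching is the orientation of the Green's-theorem contour in $\mathbb{C}_-$; but this has already been absorbed into the statement of the $\mathbb{C}_-$ corollary of Lemma~\ref{lem-170727-1050}, so there is no genuine obstacle here — the argument is a faithful reflection of the $\Omega_+$ proof.
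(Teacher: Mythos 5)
Your proposal is correct and is exactly what the paper intends: the paper states this $\Omega_-$ case without proof, relying on its announced convention that the ``$\pm$'' cases are proved by the same method, and your argument is a faithful transcription of the proof of Corollary~\ref{cor-170727-1610} with the lower-half-plane versions of Lemmas~\ref{lem-170727-1550} and~\ref{lem-170727-1050} substituted in. One cosmetic remark: your final bound, like the paper's own derivation in the $+$ case, actually controls $\lVert HD_-'\rVert_{L^2(\mathbb{C}_-,\mathrm{d}\mu)}$ rather than the $L^2(\mathbb{R},\mathrm{d}x)$ norm written in the statement, which is the norm that is in fact used later in Theorem~\ref{thm-170727-1750}.
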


\section{Proof of the Main Theorems}

Denote the measure $d(w)\,\mathrm{d}\lambda(w)$ on $\mathbb{C}$ 
as $\mathrm{d}\nu(w)$ and consider the function spaces 
$L^2(\Omega_\pm,\mathrm{d}\nu)$, thus, 
if $F\in L^2(\Omega_\pm,\mathrm{d}\nu)$, then
\begin{align*}
  \lVert F\rVert_{L^2(\Omega_\pm,\mathrm{d}\nu)}^2
  &= \iint_{\Omega_\pm} |F|^2\,\mathrm{d}\nu(w)         \\
  &= \iint_{\Omega_\pm} |F|^2 d(w)\,\mathrm{d}\lambda(w)
   < \infty.
\end{align*}

\begin{theorem}\label{thm-170727-1750}
  If $F\in\mathcal{T}(\Omega_+)$ and $\tau>0$ is fixed, then 
  \[\lVert F(\cdot+\mathrm{i}\tau)\rVert_{L^2(\Gamma,|\mathrm{d}\zeta|)}
    \leqslant C\sqrt{1+M^2}\lVert F'\rVert_{L^2(\Omega_+,\mathrm{d}\nu)}.\]
  If, furthermore, $F$ is continuous to the boundary, then we also have
  \[\lVert F\rVert_{L^2(\Gamma,|\mathrm{d}\zeta|)}
    \leqslant C\sqrt{1+M^2}\lVert F'\rVert_{L^2(\Omega_+,\mathrm{d}\nu)}.\]
  Here, $C=7\mathrm{e}^{2\theta_0}$ and $M=\lVert a'\rVert_\infty$.
\end{theorem}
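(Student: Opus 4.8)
The plan is to establish the second (boundary) inequality first and deduce the first by a vertical shift; throughout I assume $F\in\mathcal{T}(\Omega_+)$ is continuous to $\Gamma$ and write $\Phi=\Phi_+$, $\theta_0=\arctan M$. Since $\Phi(\mathbb{R})=\Gamma$, parametrising $\Gamma$ by $\Phi$ gives $|\mathrm{d}\zeta|=|\Phi'(x)|\,\mathrm{d}x$, so that $\lVert F\rVert_{L^2(\Gamma,|\mathrm{d}\zeta|)}^2=\int_{\mathbb{R}}|F(\Phi)|^2|\Phi'|\,\mathrm{d}x$. The factor $\sqrt{1+M^2}$ enters here through the sector estimate of Lemma~\ref{lem-170522-1205}: because $|\arg\Phi'|\le\theta_0$ and $\cos\theta_0=(1+M^2)^{-1/2}$, we have $|\Phi'|\le\sqrt{1+M^2}\,\mathrm{Re}\,\Phi'$, whence
\[
\lVert F\rVert_{L^2(\Gamma,|\mathrm{d}\zeta|)}^2
\le\sqrt{1+M^2}\,\mathrm{Re}\int_{\mathbb{R}}|F(\Phi)|^2\Phi'\,\mathrm{d}x .
\]

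Next I would turn the boundary integral into an area integral. Corollary~\ref{cor-170727-1600} applies to $F\in\mathcal{T}(\Omega_+)$ continuous to $\Gamma$ and gives $\int_{\mathbb{R}}|F(\Phi)|^2\Phi'\,\mathrm{d}x=\iint_{\mathbb{C}_+}\Delta(|F(\Phi)|^2\Phi')\,\mathrm{d}\mu$. Since $F(\Phi)$ and $\Phi'$ are holomorphic, a direct computation with $\Delta=4\partial_z\partial_{\overline z}$ yields
\[
\Delta\bigl(|F(\Phi)|^2\Phi'\bigr)
=4\bigl(|F'(\Phi)|^2|\Phi'|^2\Phi'
+F(\Phi)\,\overline{F'(\Phi)}\,\overline{\Phi'}\,\Phi''\bigr).
\]
Bounding $\mathrm{Re}(\,\cdot\,)\le|\,\cdot\,|$ under the integral sign, I arrive at $\lVert F\rVert_{L^2(\Gamma,|\mathrm{d}\zeta|)}^2\le 4\sqrt{1+M^2}\,(I_1+I_2)$, where
\[
I_1=\iint_{\mathbb{C}_+}|F'(\Phi)|^2|\Phi'|^3\,\mathrm{d}\mu,\qquad
I_2=\iint_{\mathbb{C}_+}|F(\Phi)||F'(\Phi)||\Phi'||\Phi''|\,\mathrm{d}\mu .
\]

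The two integrals are estimated separately, and the cross term $I_2$ is the crux. For $I_1$, changing variables $w=\Phi(z)$ (so $\mathrm{d}\lambda(z)=|\Phi'|^{-2}\mathrm{d}\lambda(w)$) and using $|y|\le 2d(\Phi)/|\Phi'|$ from Lemma~\ref{lem-170727-1550} gives $I_1\le 2\lVert F'\rVert_{L^2(\Omega_+,\mathrm{d}\nu)}^2$. For $I_2$, with $H=F(\Phi)(\Phi')^{1/2}$, I would split the integrand by Cauchy--Schwarz as $\bigl(|F'(\Phi)||\Phi'|^{3/2}\bigr)\bigl(|F(\Phi)||\Phi'|^{-1/2}|\Phi''|\bigr)$, obtaining $I_2\le I_1^{1/2}\,\lVert HV_+'\rVert_{L^2(\mathbb{C}_+,\mathrm{d}\mu)}$, where $V_+'=\Phi''/\Phi'$. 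The difficulty is that $I_2$ involves $F$ rather than $F'$, and the only available control, via $|\Phi''|\le e^{\theta_0}|D_+'\Phi'|$ and the consequently estimate of Corollary~\ref{cor-170727-1610}, is
\[
\lVert HV_+'\rVert_{L^2(\mathbb{C}_+,\mathrm{d}\mu)}
\le e^{\theta_0}\lVert HD_+'\rVert_{L^2(\mathbb{C}_+,\mathrm{d}\mu)}
\le e^{2\theta_0}\lVert H\rVert_{L^2(\mathbb{R},\mathrm{d}x)}
=e^{2\theta_0}\lVert F\rVert_{L^2(\Gamma,|\mathrm{d}\zeta|)},
\]
a bound with constant larger than $1$, so $I_2$ cannot be absorbed by a plain triangle inequality. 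The way out is that $I_2$ appears as a product: writing $X=\lVert F\rVert_{L^2(\Gamma,|\mathrm{d}\zeta|)}$ and $Y=\lVert F'\rVert_{L^2(\Omega_+,\mathrm{d}\nu)}$, the preceding estimates combine into the quadratic inequality
\[
X^2\le 8\sqrt{1+M^2}\,Y^2+4\sqrt2\,e^{2\theta_0}\sqrt{1+M^2}\,XY .
\]
Applying Young's inequality to the cross term, $4\sqrt2\,e^{2\theta_0}\sqrt{1+M^2}\,XY\le\tfrac12X^2+16\,e^{4\theta_0}(1+M^2)Y^2$, and absorbing the $\tfrac12X^2$ on the left gives $X^2\le 48\,e^{4\theta_0}(1+M^2)Y^2$, i.e. $X\le 4\sqrt3\,e^{2\theta_0}\sqrt{1+M^2}\,Y\le 7e^{2\theta_0}\sqrt{1+M^2}\,Y$, which is the asserted bound with $C=7e^{2\theta_0}$.

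Finally, I would remove the continuity hypothesis to get the first inequality by applying what was just proved to the shifted function $F_\tau(w)=F(w+\mathrm{i}\tau)$. For $\tau>0$ this is holomorphic on a neighbourhood of $\overline{\Omega_+}$, hence continuous to $\Gamma$, and the decay estimates defining $\mathcal{T}(\Omega_+)$ persist under the shift (up to constants), so $F_\tau\in\mathcal{T}(\Omega_+)$. Since $F_\tau'(w)=F'(w+\mathrm{i}\tau)$, the change of variables $w'=w+\mathrm{i}\tau$ together with the monotonicity $d(w'-\mathrm{i}\tau)\le d(w')$ for $w'\in\Omega_++\mathrm{i}\tau$ — moving vertically toward $\Gamma$ decreases the distance, since $\tfrac{\mathrm{d}}{\mathrm{d}y}d(x_0+\mathrm{i}y)^2=2(y-a(\xi^*))>0$ at the foot $\xi^*$ of the perpendicular — yields $\lVert F_\tau'\rVert_{L^2(\Omega_+,\mathrm{d}\nu)}\le\lVert F'\rVert_{L^2(\Omega_+,\mathrm{d}\nu)}$. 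Substituting into the boundary inequality for $F_\tau$ gives the first inequality with the same constant $C=7e^{2\theta_0}$.
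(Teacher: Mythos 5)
Your proposal is correct and follows essentially the same route as the paper's own proof: the same reduction of the boundary norm to $\sqrt{1+M^2}\,\bigl|\int_{\mathbb{R}}|F(\Phi_+)|^2\Phi_+'\,\mathrm{d}x\bigr|$, the same passage to an area integral via Corollary~\ref{cor-170727-1600}, the same estimate of the main term by Lemma~\ref{lem-170727-1550} and of the cross term by Cauchy--Schwarz together with Corollary~\ref{cor-170727-1610}, and the same shift argument for general $F\in\mathcal{T}(\Omega_+)$. The only cosmetic difference is that you resolve the resulting quadratic inequality by Young's inequality where the paper completes the square; both give the identical bound $48\mathrm{e}^{4\theta_0}(1+M^2)\lVert F'\rVert_{L^2(\Omega_+,\mathrm{d}\nu)}^2$ and hence $C=7\mathrm{e}^{2\theta_0}$.
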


\begin{proof}
  We first assume $F\in\mathcal{T}(\Omega_+)$ is continuous to the boundary 
  and denote $\Phi_+$ as $\Phi$. Since 
  $\Gamma=\{\zeta(u)=u+\mathrm{i}a(u)\colon u\in\mathbb{R}\}$ 
  is a Lipschitz curve and, by Lemma~\ref{lem-170522-1205}, 
  $\lvert\mathrm{Im}\,\Phi'(z)\rvert\leqslant M\mathrm{Re}\,\Phi'(z)$, 
  we have
  \begin{align*}
    \lVert F\rVert_{L^2(\Gamma,|\mathrm{d}\zeta|)}^2
    &= \int_\Gamma |F|^2|\mathrm{d}\zeta|            
     = \int_{\mathbb{R}} |F(\Phi)|^2 |\Phi'|\,\mathrm{d}x  \\
    &\leqslant \sqrt{1+M^2} \int_{\mathbb{R}} |F(\Phi)|^2 
        \mathrm{Re}\,\Phi'\,\mathrm{d}x                    \\
    &\leqslant \sqrt{1+M^2} \Big|\int_{\mathbb{R}} |F(\Phi)|^2 
         \Phi'\,\mathrm{d}x\Big|.
  \end{align*}
  Denote $\int_{\mathbb{R}} |F(\Phi)|^2 |\Phi'|\,\mathrm{d}x$ as $I_1$, 
  $\sqrt{1+M^2}$ as $M_1$, $F(\Phi)$ as $H$, 
  then, by Corollary~\ref{cor-170727-1600}, 
  \begin{equation}\label{equ-170727-2010}
    \begin{aligned}
    I_1
    &\leqslant M_1\Big|\int_{\mathbb{C}_+} \Delta(|H|^2\Phi')
          \,\mathrm{d}\mu(z)\Big|                                 \\
    &= 4M_1\Big|\int_{\mathbb{C}_+} (|H'|^2\Phi'+ H\overline{H'}\Phi'')
          \,\mathrm{d}\mu(z)\Big|                                 \\
    &= 4M_1\bigg(\int_{\mathbb{C}_+} |H'|^2|\Phi'|\,\mathrm{d}\mu(z)
         + \int_{\mathbb{C}_+} |H\overline{H'}\Phi''|\,\mathrm{d}\mu(z)\bigg).
    \end{aligned}
  \end{equation}
  We denote the first integral right above as $I_2$. It follows that, 
  by Lemma~\ref{lem-170727-1550}, 
  \begin{equation}\label{equ-170727-2020}
    \begin{aligned}
    I_2
    &= \lVert H'(\Phi')^{\frac12}\rVert_{L^2(\mathbb{C}_+,\mathrm{d}\mu)}^2 \\
    &= \int_{\mathbb{C}_+} |F'(\Phi)|^2|\Phi'|^3 y\,\mathrm{d}\lambda(z)  \\
    &\leqslant 2\int_{\mathbb{C}_+} |F'(\Phi)|^2|\Phi'|^2 
         d(\Phi)\,\mathrm{d}\lambda(z)                              \\
    &= 2\int_{\Omega_+} |F'(w)|^2 d(w)\,\mathrm{d}\lambda(w)         \\
    &= 2\lVert F'\rVert_{L^2(\Omega_+,\mathrm{d}\nu)}^2.
  \end{aligned}
  \end{equation}
  
  Since $\Phi'=\mathrm{e}^V$ and $\Phi''= V'\Phi'$, H\"{o}lder's inequality 
  implies that 
  \begin{align*}
    \int_{\mathbb{C}_+} |H\overline{H'}\Phi''|\,\mathrm{d}\mu(z)
    &= \int_{\mathbb{C}_+} |HV'(\Phi')^{\frac12}|
         \cdot |\overline{H'}(\Phi')^{\frac12}|\,\mathrm{d}\mu(z)   \\
    &\leqslant \lVert HV'(\Phi')^{\frac12}
                 \rVert_{L^2(\mathbb{C}_+,\mathrm{d}\mu)}
         \cdot \lVert \overline{H'}(\Phi')^{\frac12}
                 \rVert_{L^2(\mathbb{C}_+,\mathrm{d}\mu)}           \\
    &= \lVert HV'(\Phi')^{\frac12}\rVert_{L^2(\mathbb{C}_+,\mathrm{d}\mu)}
         \cdot I_2^{\frac12}.
  \end{align*}
  Notice that $D=\mathrm{e}^{\mathrm{i}V}$ and 
  $|V'|=|D'D^{-1}|\leqslant \mathrm{e}^{\theta_0}|D'|$, then 
  \begin{align*}
    \lVert HV'(\Phi')^{\frac12}\rVert_{L^2(\mathbb{C}_+,\mathrm{d}\mu)}
    &\leqslant \mathrm{e}^{\theta_0} \lVert H(\Phi')^{\frac12}D'
                  \rVert_{L^2(\mathbb{C}_+,\mathrm{d}\mu)}           \\
    &\leqslant \mathrm{e}^{2\theta_0} \lVert H(\Phi')^{\frac12}
                  \rVert_{L^2(\mathbb{R},\mathrm{d}x)}              \\
    &= \mathrm{e}^{2\theta_0} I_1^{\frac12},
  \end{align*}
  by Corollary~\ref{cor-170727-1610}, since 
  $H(\Phi')^{\frac12}= F(\Phi)(\Phi')^{\frac12}$.
  
  Thus, inequality~\eqref{equ-170727-2010} becomes
  \[I_1\leqslant 4M_1(I_2+ I_2^{\frac12}
            \cdot \mathrm{e}^{2\theta_0} I_1^{\frac12}),\]
  and we rewrite it as
  \[I_1-8M_1\mathrm{e}^{2\theta_0} I_1^{\frac12}I_2^{\frac12}
    \leqslant 8M_1 I_2- I_1,\]
  or
  \[(I_1^{\frac12}-4M_1\mathrm{e}^{2\theta_0} I_2^{\frac12})^2
    \leqslant (8M_1+16M_1^2\mathrm{e}^{4\theta_0})I_2- I_1,\]
  and then 
  \[I_1\leqslant 8M_1(1+2M_1\mathrm{e}^{4\theta_0})I_2
    \leqslant 24\mathrm{e}^{4\theta_0} M_1^2 I_2,\]
  as $M_1 \mathrm{e}^{4\theta_0}
      = \mathrm{e}^{4\theta_0}\sqrt{1+M^2}\geqslant 1$. 
  Together with~\eqref{equ-170727-2020}, we have 
  \[\lVert F\rVert_{L^2(\Gamma,|\mathrm{d}\zeta|)}^2
    = I_1 
    \leqslant 48\mathrm{e}^{4\theta_0} M_1^2 
          \lVert F'\rVert_{L^2(\Omega_+,\mathrm{d}\nu)}^2,\]
  and
  \[\lVert F\rVert_{L^2(\Gamma,|\mathrm{d}\zeta|)}
    \leqslant 7\mathrm{e}^{2\theta_0} M_1\lVert F'\rVert_{L^2(\Omega_+,\mathrm{d}\nu)}
    = C\sqrt{1+M^2}\lVert F'\rVert_{L^2(\Omega_+,\mathrm{d}\nu)},\]
  where $C=7\mathrm{e}^{2\theta_0}$.
  
  For the general case of $F\in\mathcal{T}(\Omega_+)$, fix $\tau>0$ and define 
  $G(w)=F(w+\mathrm{i}\tau)$ for $w\in\Omega_+$, 
  then $G\in\mathcal{T}(\Omega_+)$
  and is continous to the boundary. By what we have proved,
  \[\lVert F(\cdot+\mathrm{i}\tau)\rVert_{L^2(\Gamma,|\mathrm{d}\zeta|)}
    =\lVert G\rVert_{L^2(\Gamma,|\mathrm{d}\zeta|)}
    \leqslant C\sqrt{1+M^2}\lVert G'\rVert_{L^2(\Omega_+,\mathrm{d}\nu)}.\] 
  Since $G'(w)=F'(w+\mathrm{i}\tau)$ and $d(w)\leqslant d(w+\mathrm{i}\tau)$ 
  for $w\in\Omega_+$,
  \begin{align*}
    \lVert G'\rVert_{L^2(\Omega_+,\mathrm{d}\nu)}^2
    &= \iint_{\Omega_+} |F'(w+\mathrm{i}\tau)|^2 d(w)\,\mathrm{d}\lambda(w) \\
    &\leqslant \iint_{\Omega_+} |F'(w+\mathrm{i}\tau)|^2 
          d(w+\mathrm{i}\tau)\,\mathrm{d}\lambda(w)           \\
    &= \iint_{\Omega_{+}+\mathrm{i}\tau} |F'(w)|^2 
          d(w)\,\mathrm{d}\lambda(w)                          \\
    &\leqslant \iint_{\Omega_+} |F'(w)|^2 d(w)\,\mathrm{d}\lambda(w)   \\
    &= \lVert F'\rVert_{L^2(\Omega_+,\mathrm{d}\nu)}^2,
  \end{align*}
  where $\Omega_{+}+\mathrm{i}\tau= \{w+\mathrm{i}\tau\colon w\in\Omega_{+}\}
    \subset \Omega_+$ for $\tau>0$. Thus
  \[\lVert F(\cdot+\mathrm{i}\tau)\rVert_{L^2(\Gamma,|\mathrm{d}\zeta|)}
      \leqslant C\sqrt{1+M^2}\lVert F'\rVert_{L^2(\Omega_+,\mathrm{d}\nu)},\]
  and the theorem is proved.
\end{proof}

\begin{corollary}\label{cor-170729-2350}
  If $F\in\mathcal{T}(\Omega_-)$ and $\tau>0$ is fixed, then 
  \[\lVert F(\cdot-\mathrm{i}\tau)\rVert_{L^2(\Gamma,|\mathrm{d}\zeta|)}
    \leqslant C\sqrt{1+M^2}\lVert F'\rVert_{L^2(\Omega_-,\mathrm{d}\nu)}.\]
  If, furthermore, $F$ is continuous to the boundary, then we also have
  \[\lVert F\rVert_{L^2(\Gamma,|\mathrm{d}\zeta|)}
    \leqslant C\sqrt{1+M^2}\lVert F'\rVert_{L^2(\Omega_-,\mathrm{d}\nu)}.\]
  Here, $C=7\mathrm{e}^{2\theta_0}$ and $M=\lVert a'\rVert_\infty$.
\end{corollary}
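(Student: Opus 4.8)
The plan is to mirror the proof of Theorem~\ref{thm-170727-1750} line by line, replacing $\Omega_+$, $\mathbb{C}_+$, $\Phi_+$ by their lower-half counterparts and invoking the $\Omega_-$-versions of Corollary~\ref{cor-170727-1600}, Corollary~\ref{cor-170727-1610}, Lemma~\ref{lem-170727-1550} and Lemma~\ref{lem-170522-1205} already recorded as corollaries in the excerpt. First I would treat the case where $F$ is continuous to the boundary. Writing $\Phi=\Phi_-$ and $H=F(\Phi_-)$, I would start from
\[
  \lVert F\rVert_{L^2(\Gamma,|\mathrm{d}\zeta|)}^2
   = \int_{\mathbb{R}} |F(\Phi_-)|^2|\Phi_-'|\,\mathrm{d}x,
\]
and use the angle bound $\lvert\mathrm{Im}\,\Phi_-'\rvert\leqslant M\,\mathrm{Re}\,\Phi_-'$ (the corollary to Lemma~\ref{lem-170522-1205}) to replace $|\Phi_-'|$ by $\sqrt{1+M^2}\,\mathrm{Re}\,\Phi_-'$, thereby bounding $I_1$ by $\sqrt{1+M^2}\,\bigl|\int_{\mathbb{R}}|H|^2\Phi_-'\,\mathrm{d}x\bigr|$.

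The core step is to convert this boundary integral into an area integral over $\mathbb{C}_-$ by the $\Omega_-$-analogue of Corollary~\ref{cor-170727-1600}, obtaining $4M_1$ (with $M_1=\sqrt{1+M^2}$) times the sum of $I_2=\int_{\mathbb{C}_-}|H'|^2|\Phi_-'|\,\mathrm{d}\mu$ and the cross term $\int_{\mathbb{C}_-}|H\overline{H'}\Phi_-''|\,\mathrm{d}\mu$. Exactly as in the theorem, I would bound $I_2\leqslant 2\lVert F'\rVert_{L^2(\Omega_-,\mathrm{d}\nu)}^2$ using $|y\Phi_-'|\leqslant 2d(\Phi_-)$ and the change of variables $w=\Phi_-(z)$; and I would bound the cross term by H\"older's inequality together with $|V_-'|\leqslant \mathrm{e}^{\theta_0}|D_-'|$ and the $\Omega_-$-version of Corollary~\ref{cor-170727-1610}, yielding $I_2^{1/2}\,\mathrm{e}^{2\theta_0}I_1^{1/2}$. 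This reproduces the quadratic inequality $I_1\leqslant 4M_1\bigl(I_2+\mathrm{e}^{2\theta_0}I_1^{1/2}I_2^{1/2}\bigr)$, which I would solve by completing the square to get $I_1\leqslant 24\mathrm{e}^{4\theta_0}M_1^2 I_2\leqslant 48\mathrm{e}^{4\theta_0}M_1^2\lVert F'\rVert_{L^2(\Omega_-,\mathrm{d}\nu)}^2$, i.e. the claimed bound with $C=7\mathrm{e}^{2\theta_0}$ (since $\sqrt{48}<7$).

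For general $F\in\mathcal{T}(\Omega_-)$ I would set $G(w)=F(w-\mathrm{i}\tau)$, which lies in $\mathcal{T}(\Omega_-)$ and is continuous to $\Gamma$, and apply the continuous case to $G$. The only point that requires a sign change relative to the theorem is the monotonicity of the distance under \emph{downward} translation: for $w\in\Omega_-$ one has $\Omega_--\mathrm{i}\tau\subset\Omega_-$ and $d(w)\leqslant d(w-\mathrm{i}\tau)$, so the substitution $w\mapsto w-\mathrm{i}\tau$ gives $\lVert G'\rVert_{L^2(\Omega_-,\mathrm{d}\nu)}\leqslant\lVert F'\rVert_{L^2(\Omega_-,\mathrm{d}\nu)}$. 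I do not expect a genuine obstacle: since every auxiliary estimate has already been stated in its $\Omega_-$ form, the whole argument is just the reflection of the theorem across $\Gamma$. The one thing to check carefully is precisely this orientation bookkeeping---the direction of the $\mathrm{i}\tau$ shift and the use of $|y|$ rather than $y$ in $\mathrm{d}\mu$---which is what distinguishes the lower-half computation from the upper one.
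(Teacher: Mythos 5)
Your proposal is correct and is exactly what the paper intends: the paper explicitly declares that each ``$-$'' corollary is proved by the same method as its ``$+$'' counterpart and gives no separate argument, and your line-by-line reflection of Theorem~\ref{thm-170727-1750} (including the sign check $d(w)\leqslant d(w-\mathrm{i}\tau)$ and $\Omega_--\mathrm{i}\tau\subset\Omega_-$ for the downward shift) supplies precisely the omitted details. No gaps.
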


\begin{theorem}\label{thm-170727-2120}
  Let $f\in L^2(\Omega_+,\mathrm{d}\nu)$ be compactly supported, and define, 
  for $w_2\in\overline{\Omega_-}$,
  \[Tf(w_2)= \iint_{\Omega_+} \frac{f(w_1)d(w_1)}{(w_1-w_2)^2} 
      \,\mathrm{d}\lambda(w_1)
    = \iint_{\Omega_+} \frac{f(w_1)\,\mathrm{d}\nu(w_1)}{(w_1-w_2)^2}.\]
  Then, $\lVert Tf\rVert_{L^2(\Gamma,|\mathrm{d}\zeta|)}
    \leqslant C\sqrt{1+M^2}\lVert f\rVert_{L^2(\Omega_+,\mathrm{d}\nu)}$, 
  where $C=56\pi\mathrm{e}^{2\theta_0}$.
\end{theorem}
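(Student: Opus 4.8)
The plan is to realise $Tf$ as an analytic function on $\Omega_-$ whose boundary $L^2(\Gamma)$ norm is controlled by Corollary~\ref{cor-170729-2350}, and then to estimate $\lVert(Tf)'\rVert_{L^2(\Omega_-,\mathrm{d}\nu)}$ directly by a Schur-type argument. First I would check that $Tf$ satisfies the hypotheses of Corollary~\ref{cor-170729-2350}. Since $f$ is supported on a compact set $K\subset\Omega_+$ with $\mathrm{dist}(K,\Gamma)>0$, the kernel $(w_1-w_2)^{-2}$ is analytic in $w_2$ on a neighbourhood of $\overline{\Omega_-}$, uniformly for $w_1\in K$; hence $Tf$ is analytic on $\Omega_-$, extends continuously to $\Gamma$, and one may differentiate under the integral sign. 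For $|w_2|$ large, $|w_1-w_2|\geqslant\tfrac12|w_2|$ on $K$, and since $\iint_K|f|\,\mathrm{d}\nu<\infty$ by Cauchy--Schwarz, the bounds $|Tf(w_2)|\lesssim|w_2|^{-2}$ and $|(Tf)'(w_2)|\lesssim|w_2|^{-3}$ follow, so $Tf\in\mathcal{T}(\Omega_-)$. Corollary~\ref{cor-170729-2350} then gives
\[\lVert Tf\rVert_{L^2(\Gamma,|\mathrm{d}\zeta|)}\leqslant 7\mathrm{e}^{2\theta_0}\sqrt{1+M^2}\,\lVert (Tf)'\rVert_{L^2(\Omega_-,\mathrm{d}\nu)}.\]

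Differentiating, $(Tf)'(w_2)=2\iint_{\Omega_+}\frac{f(w_1)}{(w_1-w_2)^3}\,\mathrm{d}\nu(w_1)$, so it remains to bound the integral operator $S$ with kernel $(w_1-w_2)^{-3}$ acting from $L^2(\Omega_+,\mathrm{d}\nu)$ to $L^2(\Omega_-,\mathrm{d}\nu)$; the goal is $\lVert Sf\rVert_{L^2(\Omega_-,\mathrm{d}\nu)}\leqslant 4\pi\lVert f\rVert_{L^2(\Omega_+,\mathrm{d}\nu)}$, which together with the factor $2$ and the displayed inequality yields exactly $C=56\pi\mathrm{e}^{2\theta_0}$.

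The crucial geometric observation is that for $w_1\in\Omega_+$ and $w_2\in\Omega_-$ the straight segment $[w_1,w_2]$ must cross the graph $\Gamma$ separating the two domains; evaluating at a crossing point $\zeta^\ast\in\Gamma$ gives
\[|w_1-w_2|=|w_1-\zeta^\ast|+|\zeta^\ast-w_2|\geqslant d(w_1)+d(w_2).\]
With this I would run Schur's test using the weight $h(w)=d(w)^{-1/2}$. For the first Schur integral, fix $w_2\in\Omega_-$ and set $\delta=d(w_2)$; from $d(w_1)\leqslant|w_1-w_2|$ and $\Omega_+\subset\{|w-w_2|\geqslant\delta\}$,
\[\iint_{\Omega_+}\frac{d(w_1)^{-1/2}}{|w_1-w_2|^3}\,\mathrm{d}\nu(w_1)=\iint_{\Omega_+}\frac{d(w_1)^{1/2}}{|w_1-w_2|^3}\,\mathrm{d}\lambda(w_1)\leqslant\iint_{|w-w_2|\geqslant\delta}\frac{\mathrm{d}\lambda(w)}{|w-w_2|^{5/2}}=4\pi\,\delta^{-1/2},\]
so the first Schur constant is $4\pi$; the symmetric computation over $\Omega_-$ with $w_1\in\Omega_+$ fixed gives the second constant $4\pi$ as well. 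Schur's lemma then yields $\lVert S\rVert\leqslant\sqrt{4\pi\cdot 4\pi}=4\pi$.

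Assembling the pieces gives the claim with the stated constant. The main obstacle I anticipate is not the Schur estimate itself — which becomes elementary once the separation inequality $|w_1-w_2|\geqslant d(w_1)+d(w_2)$ reduces each weighted kernel integral to the single radial integral $\iint_{|w|\geqslant\delta}|w|^{-5/2}\,\mathrm{d}\lambda=4\pi\delta^{-1/2}$ — but rather the careful verification that $Tf\in\mathcal{T}(\Omega_-)$ and is continuous up to $\Gamma$, so that Corollary~\ref{cor-170729-2350} genuinely applies; this is precisely where the compact support of $f$ is needed. One should also either cite or briefly prove Schur's test, as it is not among the lemmas established above.
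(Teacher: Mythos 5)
Your proposal is correct and follows essentially the same route as the paper: verify $Tf\in\mathcal{T}(\Omega_-)$ with continuity up to $\Gamma$, invoke Corollary~\ref{cor-170729-2350}, and control $\lVert (Tf)'\rVert_{L^2(\Omega_-,\mathrm{d}\nu)}$ by a Schur test on the kernel $|w_1-w_2|^{-3}$ using $d(w_1)\leqslant|w_1-w_2|$ and $\Omega_+\subset\{|w-w_2|\geqslant d(w_2)\}$, giving the constants $4\pi$, then $8\pi$, then $56\pi\mathrm{e}^{2\theta_0}$. The only cosmetic difference is that you run Schur's test on the weighted spaces $L^2(\mathrm{d}\nu)$ with weight $d^{-1/2}$, while the paper conjugates by $f\mapsto fd^{1/2}$ and tests the symmetrized kernel $d(w_1)^{1/2}d(w_2)^{1/2}|w_1-w_2|^{-3}$ on $L^2(\mathrm{d}\lambda)$ with constant weight; these are the same computation.
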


\begin{proof}
  Suppose $E=\mathrm{supp}\,f\subset D(0,R)$, where $R>0$. 
  Since $E\subset\Omega_+$ is compact, $Tf$ is holomorphic on a neighborhood 
  of $\overline{\Omega_-}$, thus continuous to the boundary~$\Gamma$. 
  If $|w_2|>2R$, then $|w_2-w_1|>\frac12|w_2|$ for $|w_1|<R$. 
  Since $d(w_1)\leqslant |w_2-w_1|$ for $w_1\in\Omega_+$ and $w_2\in\Omega_-$, 
  we have, by H\"{o}lder's inequality,
  \begin{align*}
    |Tf(w_2)|
    &\leqslant \frac2{|w_2|} \iint_E |f(w_1)|\,\mathrm{d}\lambda(w_1) \\
    &\leqslant \frac2{|w_2|} \bigg(\iint_E |f(w_1)|^2 
          d(w_1)\,\mathrm{d}\lambda(w_1)\bigg)^{\frac12} 
        \bigg(\iint_E \frac{\mathrm{d}\lambda(w_1)}{d(w_1)}\bigg)^{\frac12} \\
    &= \frac2{|w_2|} \lVert f\rVert_{L^2(\Omega_+,\mathrm{d}\nu)}
        \bigg(\iint_E \frac{\mathrm{d}\lambda(w_1)}{d(w_1)}\bigg)^{\frac12} \\
    &= \frac{2A}{|w_2|},
  \end{align*}
  where $A= \lVert f\rVert_{L^2(\Omega_+,\mathrm{d}\nu)}
    (\iint_E d^{-1}\mathrm{d}\lambda)^{\frac12}$,
  and
  \begin{align*}
    |(Tf)'(w_2)|
    &= \Big| 2\iint_{\Omega_+} \frac{f(w_1)d(w_1)}{(w_1-w_2)^3} 
          \,\mathrm{d}\lambda(w_1)\Big|                             \\
    &\leqslant 2\iint_{\Omega_+} \frac{|f(w_1)| d(w_1)}{|w_1-w_2|^3} 
          \,\mathrm{d}\lambda(w_1)                                  \\
    &\leqslant \frac8{|w_2|^2} \iint_E |f(w_1)|\,\mathrm{d}\lambda(w_1) \\
    &= \frac{8A}{|w_2|^2}.
  \end{align*}
  Thus $Tf\in\mathcal{T}(\Omega_-)$ and by Corollary~\ref{cor-170729-2350},
  \begin{equation}\label{equ-170727-2250}
    \lVert Tf\rVert_{L^2(\Gamma,|\mathrm{d}\zeta|)}
    \leqslant C\sqrt{1+M^2}\lVert (Tf)'\rVert_{L^2(\Omega_-,\mathrm{d}\nu)},
  \end{equation}
  where $C=7\mathrm{e}^{2\theta_0}$.
  
  Define an operator $S\colon L^2(\Omega_+,\mathrm{d}\lambda)\to 
    L^2(\Omega_-,\mathrm{d}\lambda)$ by
  \begin{align*}
    SF(w_2)
    &= d(w_2)^{\frac12} \iint_{\Omega_+} 
         \frac{F(w_1)d(w_1)^{\frac12}}{|w_1-w_2|^3}\,\mathrm{d}\lambda(w_1)\\
    &= \iint_{\Omega_+} K(w_1,w_2)F(w_1)\,\mathrm{d}\lambda(w_1),
  \end{align*}
  where $w_2\in\Omega_-$ 
  and $K(w_1,w_2)= d(w_1)^{\frac12} d(w_2)^{\frac12} |w_1-w_2|^{-3}$.
  For $w_2\in\Omega_-$ fixed, since $d(w_1)\leqslant |w_1-w_2|$ and 
  $\Omega_+\subset\mathbb{C}\setminus D(w_2,d(w_2))$, then
  \begin{align*}
    \iint_{\Omega_+} K(w_1,w_2)\,\mathrm{d}\lambda(w_1)
    &\leqslant d(w_2)^{\frac12} \iint_{\mathbb{C}\setminus D(w_2,d(w_2))} 
        |w_1-w_2|^{-\frac52}\,\mathrm{d}\lambda(w_1)                  \\
    &= d(w_2)^{\frac12} \int_0^{2\pi}\!\!\int_{d(w_2)}^{+\infty}
        r^{-\frac52}\cdot r\,\mathrm{d}r\,\mathrm{d}\theta            \\
    &= 2\pi d(w_2)^{\frac12} \int_{d(w_2)}^{+\infty}
            r^{-\frac32}\,\mathrm{d}r                          \\
    &= 2\pi d(w_2)^{\frac12}\cdot 2d(w_2)^{-\frac12}           \\
    &= 4\pi.
  \end{align*}
  The same computation yields that, for fixed $w_1\in\Omega_+$,
  \[\iint_{\Omega_-} K(w_1,w_2)\,\mathrm{d}\lambda(w_2)\leqslant 4\pi.\]
  By Schur's lemma~\cite{Gr08}, $S$ is a bounded operator from 
  $L^2(\Omega_+,\mathrm{d}\lambda)$ to $L^2(\Omega_-,\mathrm{d}\lambda)$, 
  and $\lVert S\rVert\leqslant 4\pi$.
  
  If we let $F(w_1)= f(w_1)d(w_1)^{\frac12}$, then 
  \[\lVert SF\rVert_{L^2(\Omega_-,\mathrm{d}\lambda)}
    \leqslant 4\pi\lVert F\rVert_{L^2(\Omega_+,\mathrm{d}\lambda)}
    = 4\pi\lVert f\rVert_{L^2(\Omega_+,\mathrm{d}\nu)},\]
  and 
  \[|(Tf)'(w_2)|
    \leqslant 2\iint_{\Omega_+} \frac{|f(w_1)|d(w_1)}{|w_1-w_2|^3} 
            \,\mathrm{d}\lambda(w_1) 
    = \frac{2(SF)(w_2)}{d(w_2)^{\frac12}}.\]
  It follows that,
  \begin{align*}
    \lVert (Tf)'\rVert_{L^2(\Omega_-,\mathrm{d}\nu)}
    &\leqslant 2\lVert d^{-\frac12}SF\rVert_{L^2(\Omega_-,\mathrm{d}\nu)} \\
    &= 2\lVert SF\rVert_{L^2(\Omega_-,\mathrm{d}\lambda)}                 \\
    &\leqslant 8\pi\lVert f\rVert_{L^2(\Omega_+,\mathrm{d}\nu)},
  \end{align*}
  thus, by \eqref{equ-170727-2250},
  \begin{align*}
    \lVert Tf\rVert_{L^2(\Gamma,|\mathrm{d}\zeta|)}
    &\leqslant C\sqrt{1+M^2}\cdot 
       8\pi\lVert f\rVert_{L^2(\Omega_+,\mathrm{d}\nu)}       \\
    &\leqslant C'\sqrt{1+M^2}\lVert f\rVert_{L^2(\Omega_+,\mathrm{d}\nu)},
  \end{align*}
  where $C'= 56\pi\mathrm{e}^{2\theta_0}$, and this proves the theorem.
\end{proof}

\begin{corollary}
  Let $f\in L^2(\Omega_-,\mathrm{d}\nu)$ be compactly supported, and define, 
  for $w_1\in\overline{\Omega_+}$,
  \[Tf(w_1)= \iint_{\Omega_-} \frac{f(w_2)d(w_2)}{(w_2-w_1)^2} 
      \,\mathrm{d}\lambda(w_2)
    = \iint_{\Omega_+} \frac{f(w_2)\,\mathrm{d}\nu(w_2)}{(w_2-w_1)^2}.\]
  Then, $\lVert Tf\rVert_{L^2(\Gamma,|\mathrm{d}\zeta|)}
    \leqslant C\sqrt{1+M^2}\lVert f\rVert_{L^2(\Omega_-,\mathrm{d}\nu)}$, 
  where $C=56\pi\mathrm{e}^{2\theta_0}$.
\end{corollary}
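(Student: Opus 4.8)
The plan is to mirror the proof of Theorem~\ref{thm-170727-2120} essentially verbatim, interchanging the roles of $\Omega_+$ and $\Omega_-$ (equivalently, swapping the integration variables $w_1\leftrightarrow w_2$ throughout); the entire construction is symmetric under this exchange, so no new idea is needed. First I would set $E=\mathrm{supp}\,f\subset D(0,R)$ with $R>0$. Because $E\subset\Omega_-$ is compact while the evaluation point $w_1$ ranges over $\overline{\Omega_+}$, the kernel $(w_2-w_1)^{-2}$ is holomorphic in $w_1$ on a full neighborhood of $\overline{\Omega_+}$, so $Tf$ is holomorphic there and in particular continuous up to $\Gamma$.

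Next I would record the decay of $Tf$ at infinity. For $|w_1|>2R$ we have $|w_2-w_1|>\tfrac12|w_1|$ whenever $|w_2|<R$, and since $d(w_2)\leqslant|w_2-w_1|$ for $w_2\in\Omega_-$, $w_1\in\Omega_+$, H\"older's inequality gives $|Tf(w_1)|\leqslant 2A/|w_1|$ and, differentiating under the integral sign, $|(Tf)'(w_1)|\leqslant 8A/|w_1|^2$, where $A=\lVert f\rVert_{L^2(\Omega_-,\mathrm{d}\nu)}\bigl(\iint_E d^{-1}\,\mathrm{d}\lambda\bigr)^{1/2}$. Hence $Tf\in\mathcal{T}(\Omega_+)$, and being continuous to the boundary it falls under Theorem~\ref{thm-170727-1750} (the $\Omega_+$ version, invoked here in place of the $\Omega_-$ corollary used in the theorem), which yields
\[\lVert Tf\rVert_{L^2(\Gamma,|\mathrm{d}\zeta|)}
  \leqslant 7\mathrm{e}^{2\theta_0}\sqrt{1+M^2}\,
    \lVert (Tf)'\rVert_{L^2(\Omega_+,\mathrm{d}\nu)}.\]

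It then remains to bound $\lVert(Tf)'\rVert_{L^2(\Omega_+,\mathrm{d}\nu)}$ by $\lVert f\rVert_{L^2(\Omega_-,\mathrm{d}\nu)}$. Here I would reuse the very same Schur kernel $K(w_1,w_2)=d(w_1)^{1/2}d(w_2)^{1/2}|w_1-w_2|^{-3}$, which is symmetric in its two arguments, so the two integral estimates computed in the proof of Theorem~\ref{thm-170727-2120} continue to give row and column bounds $\leqslant 4\pi$; by Schur's lemma the associated operator $S$ is bounded from $L^2(\Omega_-,\mathrm{d}\lambda)$ to $L^2(\Omega_+,\mathrm{d}\lambda)$ with $\lVert S\rVert\leqslant 4\pi$. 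Setting $F(w_2)=f(w_2)d(w_2)^{1/2}$ produces $|(Tf)'(w_1)|\leqslant 2(SF)(w_1)d(w_1)^{-1/2}$, whence $\lVert(Tf)'\rVert_{L^2(\Omega_+,\mathrm{d}\nu)}\leqslant 8\pi\lVert f\rVert_{L^2(\Omega_-,\mathrm{d}\nu)}$. Combining with the previous display gives the claimed estimate with $C=56\pi\mathrm{e}^{2\theta_0}$. I do not anticipate any genuine obstacle: the only points demanding care are the purely clerical ones of tracking which half-plane each variable inhabits and of invoking Theorem~\ref{thm-170727-1750} rather than its $\Omega_-$ corollary, the symmetry of $K$ guaranteeing that the Schur estimates transfer unchanged.
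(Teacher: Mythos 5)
Your proposal is correct and is exactly what the paper intends: the corollary is stated without proof under the paper's convention that each ``$\pm$'' companion result follows by the same method, and your mirror-image argument (swapping $\Omega_+\leftrightarrow\Omega_-$, invoking Theorem~\ref{thm-170727-1750} in place of Corollary~\ref{cor-170729-2350}, and reusing the symmetric Schur kernel) reproduces that method faithfully, including the constant $56\pi\mathrm{e}^{2\theta_0}$. No gaps.
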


For $g(\zeta)\in L^2(\Gamma,|\mathrm{d}\zeta|)$, we define the Cauchy integral, 
or Cauchy transform, of $g$ on $\Gamma$ as
\[Cg(w)= G(w)
  =\frac1{2\pi\mathrm{i}} \int_\Gamma 
     \frac{g(\zeta)\,\mathrm{d}\zeta}{\zeta-w},
  \quad \text{for } w\in\Omega_\pm,\]
then $G(w)$ is holomorphic on $\Omega_\pm$, and
\[G'(w)=\frac1{2\pi\mathrm{i}} \int_\Gamma 
      \frac{g(\zeta)\,\mathrm{d}\zeta}{(\zeta-w)^2}.\]
The two estimates in Theorem~\ref{thm-170727-1750} and 
Theorem~\ref{thm-170727-2120} could now be combined to yield a proof of 
the following theorem, which shows the $L^2$ boundedness of Cauchy integrals 
on Lipschitz curves. The proof also implies that $G\in\mathcal{T}(\Omega_\pm)$.

\begin{theorem}\label{thm-170728-0830}
  If $g(\zeta)\in L^2(\Gamma,|\mathrm{d}\zeta|)$ and $G(w)$ the Cauchy integral 
  of $g$ on $\Gamma$, then 
  \[\sup_{\tau>0} \lVert G(\cdot\pm\mathrm{i}\tau)
        \rVert_{L^2(\Gamma,|\mathrm{d}\zeta|)}
    \leqslant C(1+M^2)\lVert g\rVert_{L^2(\Gamma,|\mathrm{d}\zeta|)},\]
  where $C=196\mathrm{e}^{4\theta_0}$.
\end{theorem}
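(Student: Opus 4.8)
The plan is to prove the ``$+$'' case, i.e. the estimate for $G(\cdot+\mathrm{i}\tau)$ on $\Omega_+$; the ``$-$'' case follows verbatim from Corollary~\ref{cor-170729-2350} and the corollary of Theorem~\ref{thm-170727-2120}, and the two combine by taking the supremum over $\tau>0$. By density it suffices to treat $g\in L^2(\Gamma,|\mathrm{d}\zeta|)$ with compact support: if $\mathrm{supp}\,g\subset\{|\zeta|\leqslant\rho\}$, then for $|w|>2\rho$ one has $|\zeta-w|\geqslant\frac12|w|$ on the support, so $|G(w)|\leqslant A/|w|$ and $|G'(w)|\leqslant A/|w|^2$ with $A$ proportional to $\lVert g\rVert_{L^1(\Gamma)}$ (finite since $g$ is compactly supported and square-integrable). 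Thus $G\in\mathcal{T}(\Omega_+)$ and Theorem~\ref{thm-170727-1750} applies to give
\[\lVert G(\cdot+\mathrm{i}\tau)\rVert_{L^2(\Gamma,|\mathrm{d}\zeta|)}
  \leqslant 7\mathrm{e}^{2\theta_0}\sqrt{1+M^2}\,
    \lVert G'\rVert_{L^2(\Omega_+,\mathrm{d}\nu)}.\]
Everything then reduces to bounding $\lVert G'\rVert_{L^2(\Omega_+,\mathrm{d}\nu)}$ by $\lVert g\rVert_{L^2(\Gamma,|\mathrm{d}\zeta|)}$, and the key point is that the operator $g\mapsto G'|_{\Omega_+}$ is, up to the constant $\frac1{2\pi}$, the adjoint of the operator $T$ of Theorem~\ref{thm-170727-2120}.

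To make this precise I would pair $G'$ against a test function $\phi$ that is continuous with compact support inside the \emph{open} domain $\Omega_+$ (such $\phi$ are dense in $L^2(\Omega_+,\mathrm{d}\nu)$). Writing out
\[\iint_{\Omega_+} G'(w)\overline{\phi(w)}\,\mathrm{d}\nu(w)
  = \frac1{2\pi\mathrm{i}} \int_\Gamma g(\zeta)
      \bigg(\iint_{\Omega_+}
        \frac{\overline{\phi(w)}\,\mathrm{d}\nu(w)}{(\zeta-w)^2}\bigg)
      \mathrm{d}\zeta,\]
the interchange of integrals is legitimate because $\mathrm{supp}\,\phi$ has positive distance to $\Gamma$, so the integrand is bounded with compact support. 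The inner integral is exactly $(T\overline{\phi})(\zeta)$ for $\zeta\in\Gamma$, since $(w-\zeta)^2=(\zeta-w)^2$. Applying the Cauchy--Schwarz inequality on $\Gamma$ and then Theorem~\ref{thm-170727-2120} to the compactly supported function $\overline{\phi}$ yields
\[\Big|\iint_{\Omega_+} G'\overline{\phi}\,\mathrm{d}\nu\Big|
  \leqslant \frac1{2\pi}\lVert g\rVert_{L^2(\Gamma,|\mathrm{d}\zeta|)}
    \lVert T\overline{\phi}\rVert_{L^2(\Gamma,|\mathrm{d}\zeta|)}
  \leqslant 28\mathrm{e}^{2\theta_0}\sqrt{1+M^2}\,
    \lVert g\rVert_{L^2(\Gamma,|\mathrm{d}\zeta|)}
    \lVert\phi\rVert_{L^2(\Omega_+,\mathrm{d}\nu)},\]
using $56\pi\mathrm{e}^{2\theta_0}/(2\pi)=28\mathrm{e}^{2\theta_0}$. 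Taking the supremum over $\lVert\phi\rVert_{L^2(\Omega_+,\mathrm{d}\nu)}\leqslant1$ gives $\lVert G'\rVert_{L^2(\Omega_+,\mathrm{d}\nu)}\leqslant 28\mathrm{e}^{2\theta_0}\sqrt{1+M^2}\,\lVert g\rVert_{L^2(\Gamma,|\mathrm{d}\zeta|)}$.

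Chaining the two displayed estimates produces $\lVert G(\cdot+\mathrm{i}\tau)\rVert_{L^2(\Gamma,|\mathrm{d}\zeta|)}\leqslant 7\mathrm{e}^{2\theta_0}\cdot 28\mathrm{e}^{2\theta_0}(1+M^2)\lVert g\rVert_{L^2(\Gamma,|\mathrm{d}\zeta|)}=196\mathrm{e}^{4\theta_0}(1+M^2)\lVert g\rVert_{L^2(\Gamma,|\mathrm{d}\zeta|)}$, uniformly in $\tau$, which is the claim for compactly supported $g$. To remove the support restriction I would choose compactly supported $g_n\to g$ in $L^2(\Gamma,|\mathrm{d}\zeta|)$; since $\zeta\mapsto(\zeta-w)^{-1}$ lies in $L^2(\Gamma,|\mathrm{d}\zeta|)$ for each fixed $w\in\Omega_\pm$, the corresponding integrals satisfy $G_n(\zeta+\mathrm{i}\tau)\to G(\zeta+\mathrm{i}\tau)$ pointwise, and Fatou's lemma combined with the uniform bound on $\lVert G_n(\cdot+\mathrm{i}\tau)\rVert_{L^2(\Gamma,|\mathrm{d}\zeta|)}$ passes the inequality to $g$.

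I expect the main obstacle to be the middle step: correctly recognizing $g\mapsto G'$ as a multiple of the adjoint of $T$, and justifying the Fubini interchange --- the crucial device being to test only against $\phi$ supported strictly inside $\Omega_+$, which keeps the kernel $(\zeta-w)^{-2}$ away from its singularity on $\Gamma$ and turns the inner integral into $T\overline{\phi}$ evaluated on the boundary. By comparison, the verification that $G\in\mathcal{T}(\Omega_\pm)$ for compactly supported $g$ and the Fatou limiting argument are routine.
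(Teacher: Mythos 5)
Your proposal is correct and follows essentially the same route as the paper: reduce to compactly supported $g$, verify $G\in\mathcal{T}(\Omega_\pm)$, apply Theorem~\ref{thm-170727-1750}, bound $\lVert G'\rVert_{L^2(\Omega_+,\mathrm{d}\nu)}$ by duality against compactly supported test functions using Fubini and Theorem~\ref{thm-170727-2120}, and finish with a Fatou limiting argument; the constants combine identically to $196\mathrm{e}^{4\theta_0}$. Your explicit justification of the Fubini interchange (testing only against functions supported strictly inside $\Omega_+$) is a small refinement of a step the paper leaves implicit.
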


\begin{proof}
  We first assume $g$ is compactly supported on $\Gamma$, and suppose that 
  $E= \mathrm{supp}\,g\subset D(0,R)$, where $R>0$. If $|w|>2R$, then 
  $|w-\zeta|> \frac12|w|$ for $\zeta\in E$, and
  \begin{align*}
    |G(w)|
    &\leqslant \frac1{2\pi} \int_E 
         \frac{|g(\zeta)|}{|\zeta-w|}|\mathrm{d}\zeta|                  \\
    &\leqslant \frac1{\pi|w|} \int_E |g(\zeta)|\,|\mathrm{d}\zeta|      \\
    &\leqslant \frac1{\pi|w|} \Big(\int_E 
         |g(\zeta)|^2\,|\mathrm{d}\zeta|\Big)^{\frac12}
         \Big(\int_E |\mathrm{d}\zeta|\Big)^{\frac12}                    \\
    &\leqslant \frac1{\pi|w|} \lVert g\rVert_{L^2(\Gamma,|\mathrm{d}\zeta|)}
         (2R\sqrt{1+M^2})^{\frac12}                                      \\
    &= \frac{A}{|w|},
  \end{align*}
  where $A= \frac1{\pi} \lVert g\rVert_{L^2(\Gamma,|\mathrm{d}\zeta|)}
    (2R\sqrt{1+M^2})^{\frac12}$.
  We also have 
  \begin{align*}
    |G'(w)|
    &\leqslant \frac1{2\pi} \int_E 
         \frac{|g(\zeta)|}{|\zeta-w|^2}|\mathrm{d}\zeta|                \\
    &\leqslant \frac2{\pi|w|^2} \int_E |g(\zeta)|\,|\mathrm{d}\zeta|    \\
    &= \frac{2A}{|w|^2},
  \end{align*}  
  then $G\in\mathcal{T}(\Omega_\pm)$.
  
  Next we will focus on the case of $G\in\mathcal{T}(\Omega_+)$, and let
  \[B= \{f\in L^2(\Omega_+,\mathrm{d}\nu)\colon 
      \lVert f\rVert_{L^2(\Omega_+,\mathrm{d}\nu)}\leqslant 1,
      f \text{ is compactly supported in } \Omega_+\},\]
  then 
  \[\lVert G'\rVert_{L^2(\Omega_+,\mathrm{d}\nu)}
    = \sup_{f\in B} \Big|\iint_{\Omega_+} 
          G'\overline{f}\,\mathrm{d}\nu\Big|.\]
  Fix $\tau>0$, by Theorem~\ref{thm-170727-1750}, Fubini's theorem and 
  Theorem~\ref{thm-170727-2120}, we obtain,
  \begin{align*}
    &\lVert G(\cdot+\mathrm{i}\tau)\rVert_{L^2(\Gamma,|\mathrm{d}\zeta|)} \\
    \leqslant{}& C_1\sqrt{1+M^2} 
         \lVert G'\rVert_{L^2(\Omega_+,\mathrm{d}\nu)}                \\
    ={}& \frac{C_1\sqrt{1+M^2}}{2\pi} \sup_{f\in B} \Big|\iint_{\Omega_+} 
         \Big(\int_\Gamma \frac{g(\zeta)\,\mathrm{d}\zeta}{(\zeta-w_1)^2}\Big) 
         \overline{f(w_1)}d(w_1)\,\mathrm{d}\lambda(w_1)\Big|          \\
    ={}& \frac{C_1\sqrt{1+M^2}}{2\pi} \sup_{f\in B} \Big|
         \int_\Gamma g(\zeta)(T\overline{f})(\zeta)\,\mathrm{d}\zeta\Big| \\
    \leqslant{}& \frac{C_1\sqrt{1+M^2}}{2\pi} \sup_{f\in B} 
         \Big(\lVert g\rVert_{L^2(\Gamma,|\mathrm{d}\zeta|)}
         \lVert T\overline{f}\rVert_{L^2(\Gamma,|\mathrm{d}\zeta|)}\Big)   \\
    \leqslant{}& \frac{C_1C_2(1+M^2)}{2\pi} 
         \lVert g\rVert_{L^2(\Gamma,|\mathrm{d}\zeta|)}
         \sup_{f\in B} \lVert f\rVert_{L^2(\Omega_+,\mathrm{d}\nu)}    \\
    \leqslant{}& C(1+M^2)\lVert g\rVert_{L^2(\Gamma,|\mathrm{d}\zeta|)},
  \end{align*}
  where $C_1= 7\mathrm{e}^{2\theta_0}$, $C_2= 56\pi\mathrm{e}^{2\theta_0}$ and
  $C= \frac1{2\pi}C_1C_2= 196\mathrm{e}^{4\theta_0}$.
  
  In the general case, we let $g_n(\zeta)= \chi_{D(0,n)}g(\zeta)$ 
  for $\zeta\in\Gamma$, where $n>0$ and $\chi$ is 
  the characteristic function of a set, then $g_n$ is 
  compactly supported on $\Gamma$, and
  $\lVert g_n-g\rVert_{L^2(\Gamma,|\mathrm{d}\zeta|)}\to 0$ as $n\to\infty$. 
  For $\tau>0$ and $\zeta_0\in\Gamma$ both fixed, 
  let $w_0=\zeta_0+\mathrm{i}\tau\in\Omega_+$. Denote the Cauchy integral of 
  $g_n$ as $G_n$, then we have
  \begin{align*}
    |G_n(w_0)- G(w_0)|
    &\leqslant \frac1{2\pi} \int_\Gamma 
         \frac{|g_n(\zeta)-g(\zeta)|}{\zeta-w_0} |\mathrm{d}\zeta|      \\
    &\leqslant \frac1{2\pi} \lVert g_n-g\rVert_{L^2(\Gamma,|\mathrm{d}\zeta|)}
         \Big(\int_\Gamma \frac{|\mathrm{d}\zeta|}{|\zeta-w_0|^2}
             \Big)^{\frac12}                                          \\
    &\to 0 \text{ as } n\to\infty,
  \end{align*}
  thus, by Fatou's lemma, 
  \begin{align*}
    \lVert G(\cdot+\mathrm{i}\tau)\rVert_{L^2(\Gamma,|\mathrm{d}\zeta|)}
    &\leqslant \liminf_{n\to\infty} \lVert G_n(\cdot+\mathrm{i}\tau) 
          \rVert_{L^2(\Gamma,|\mathrm{d}\zeta|)}                     \\
    &\leqslant \liminf_{n\to\infty} C(1+M^2) \lVert g_n 
          \rVert_{L^2(\Gamma,|\mathrm{d}\zeta|)}                     \\
    &= C(1+M^2) \lVert g\rVert_{L^2(\Gamma,|\mathrm{d}\zeta|)},
  \end{align*}
  and the theorem follows.
\end{proof}

The following lemma is proved in~\cite{DL171}.

\begin{lemma}\label{lem-170622-2122}
  If $F(\zeta)\in L^p(\Gamma,|\mathrm{d}\zeta|)$, and $u_0$ is 
  the Lebesgue point of $F(u+\mathrm{i}a(u))$ such that 
  $\zeta'(u_0)= |\zeta'(u_0)|\mathrm{e}^{\mathrm{i}\phi_0}$ exists,
  where $\phi_0\in(-\theta_0,\theta_0)$, 
  then for any $\phi\in(0,\frac{\pi}2)$, we have
  \[\lim_{\substack{z+\zeta_0\in\Omega_\phi(\zeta_0)\cap\Omega^+,\\ 
        z\to 0}}
    \int_\Gamma K_z(\zeta,\zeta_0)F(\zeta)\,\mathrm{d}\zeta
    = F(\zeta_0).\]
\end{lemma}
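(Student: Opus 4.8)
The result is quoted from \cite{DL171}; I sketch how I would prove it, the idea being to treat $K_z(\cdot,\zeta_0)$ as a Poisson-type approximate identity on $\Gamma$ concentrated at $\zeta_0$. First I would use the normalization $\int_\Gamma K_z(\zeta,\zeta_0)\,\mathrm{d}\zeta=1$, valid whenever $\zeta_0+z\in\Omega^+$ and $\zeta_0-z\in\Omega^-$, which holds for $\zeta_0+z\in\Omega_\phi(\zeta_0)$ with $|z|$ small by the remark following \eqref{equ-170623-1150}. Subtracting $F(\zeta_0)$, it then suffices to show
\[\int_\Gamma K_z(\zeta,\zeta_0)\big(F(\zeta)-F(\zeta_0)\big)\,\mathrm{d}\zeta\longrightarrow 0\]
as $z\to0$ with $\zeta_0+z\in\Omega_\phi(\zeta_0)\cap\Omega^+$. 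Parametrizing $\zeta=\zeta(u)$, putting $g(u)=F(\zeta(u))$, and using $\mathrm{d}\zeta=\zeta'(u)\,\mathrm{d}u$ with $|\zeta'(u)|\le\sqrt{1+M^2}$, the task reduces to proving that $\int_{\mathbb{R}}|K_z(\zeta(u),\zeta_0)|\,|g(u)-g(u_0)|\,\mathrm{d}u\to0$, where $u_0$ is a Lebesgue point of $g\in L^p(\mathbb{R})$.

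The crux is the Poisson-type kernel estimate
\[|K_z(\zeta(u),\zeta_0)|\le\frac{C_\phi\,|z|}{(|u-u_0|+|z|)^2},\]
which I would derive from \eqref{equ-170623-1150} together with two lower bounds on the factors $|\zeta(u)-(\zeta_0\pm z)|$. Taking real parts gives $|\zeta(u)-(\zeta_0\pm z)|\ge\big||u-u_0|\mp\mathrm{Re}\,z\big|$, which controls $|u-u_0|$ from below once $|u-u_0|\ge2|z|$; and since $\zeta_0+z\in\Omega^+$, $\zeta_0-z\in\Omega^-$, one also has $|\zeta(u)-(\zeta_0\pm z)|\ge d(\zeta_0\pm z)$. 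The main obstacle is the geometric claim $d(\zeta_0\pm z)\ge c_\phi|z|$: this is precisely where the cone (non-tangential) hypothesis enters. To prove it I would localize the infimum defining $d(\zeta_0+z)$ to $u$ near $u_0$ — far competitors are excluded for small $|z|$ by the real-part bound, since $|u-u_0|$ large forces $|\zeta(u)-(\zeta_0+z)|$ large — and near $u_0$ use the existence of $\zeta'(u_0)$ to replace $\Gamma$ by its tangent line, whose distance from $\zeta_0+z$ is $\gtrsim|z|\sin\phi$ by the condition $\arg z-\phi_0\in(\phi,\pi-\phi)$. Combining the two regimes $|u-u_0|<2|z|$ and $|u-u_0|\ge2|z|$ yields the displayed kernel bound.

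Granting this bound, the conclusion follows from the standard approximate-identity argument at a Lebesgue point. I would split at $|u-u_0|=\delta$. On the far part $|u-u_0|\ge\delta$ the kernel is at most $|z|/|u-u_0|^2$; estimating $|g-g(u_0)|$ by $|g|+|g(u_0)|$ and applying H\"older's inequality (with $g\in L^p$ and the convergent integral $\int_\delta^\infty t^{-2p'}\,\mathrm{d}t$, with the obvious modification when $p=1$) bounds this contribution by a constant multiple of $|z|$, hence it tends to $0$ as $z\to0$ for fixed $\delta$. On the near part, writing $t=|u-u_0|$ and $m(t)=\int_{|u-u_0|\le t}|g(u)-g(u_0)|\,\mathrm{d}u$, the Lebesgue point condition gives $m(t)=o(t)$; integrating the decreasing kernel $|z|(t+|z|)^{-2}$ against $\mathrm{d}m$ by parts and using $\tfrac{|z|t}{(t+|z|)^2}\le\tfrac14$ together with $\int_0^\infty 2|z|t(t+|z|)^{-3}\,\mathrm{d}t=1$ shows the near contribution is at most $C_\phi\sup_{0<t\le\delta}m(t)/t$, which can be made arbitrarily small by choosing $\delta$ small, uniformly in $|z|$. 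Letting first $z\to0$ and then $\delta\to0$ completes the proof.
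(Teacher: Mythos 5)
The paper itself contains no proof of Lemma~\ref{lem-170622-2122}: it is imported from \cite{DL171} with only a citation, so there is no in-paper argument to measure yours against. Judged on its own terms, your sketch is a correct, standard approximate-identity proof, and it isolates the right crux: the cone-separation estimate $d(\zeta_0\pm z)\geqslant c_\phi|z|$, which is exactly where the non-tangential hypothesis and the existence of $\zeta'(u_0)$ enter (your two-regime argument for it works; just note that the threshold on $|z|$ below which it holds depends on the modulus of differentiability of $\zeta$ at $u_0$, which is harmless since $\zeta_0$ is fixed throughout the limit). One point you should make explicit when writing out the kernel bound: in the regime $|u-u_0|\geqslant 2|z|$ the real-part lower bound $|\zeta(u)-(\zeta_0\pm z)|\geqslant |u-u_0|-|z|\geqslant\tfrac12|u-u_0|$ must be applied to \emph{both} factors of the denominator in \eqref{equ-170623-1150}; if you used $d(\zeta_0\pm z)\geqslant c_\phi|z|$ for one of them there, you would only get $|K_z|=O(|u-u_0|^{-1})$ with no factor of $|z|$, and the far contribution would not vanish as $z\to 0$. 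With both factors handled by the real-part bound in the far regime and both by the distance bound in the near regime, the estimate $|K_z(\zeta(u),\zeta_0)|\leqslant C_\phi|z|\,(|u-u_0|+|z|)^{-2}$ holds for $|z|$ small, and the Lebesgue-point argument you describe (normalization $\int_\Gamma K_z\,\mathrm{d}\zeta=1$, splitting at $|u-u_0|=\delta$, H\"older on the tail, integration by parts against $m(t)=o(t)$ near $u_0$) goes through.
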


Now we could proof that $L^2(\Gamma,|\mathrm{d}\zeta|)$ is the sum of 
$H^2(\Omega_\pm)$ in the non-tangential boundary limit sense.

\begin{corollary}
  Every function in $L^2(\Gamma,|\mathrm{d}\zeta|)$ is (a.e.\@ on $\Gamma$) 
  the sum of the non-tangential boundary limit of two functions in 
  $H^2(\Omega_+)$ and $H^2(\Omega_-)$, respectively, 
  or we could simply write
  \[L^2(\Gamma,|\mathrm{d}\zeta|)= H^2(\Omega_+)+ H^2(\Omega_-).\]
\end{corollary}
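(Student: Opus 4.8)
The plan is to realize the decomposition explicitly through the Cauchy transform. Given $g\in L^2(\Gamma,|\mathrm{d}\zeta|)$, let $G=Cg$ be its Cauchy integral, so that $G$ is holomorphic on both $\Omega_+$ and $\Omega_-$. I will write $G_\pm$ for the restrictions $G|_{\Omega_\pm}$ and argue that $G_+\in H^2(\Omega_+)$ while $G_-\in H^2(\Omega_-)$; the candidate decomposition is then $g=G_+^\ast+(-G_-)^\ast$, where $G_\pm^\ast$ denotes the non-tangential boundary limit of $G_\pm$ and $-G_-\in H^2(\Omega_-)$ since $H^2(\Omega_-)$ is a vector space.

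First I would verify the membership in the Hardy spaces. For $w=\zeta+\mathrm{i}\tau$ with $\zeta\in\Gamma$ one has $|\mathrm{d}w|=|\mathrm{d}\zeta|$, so that $\int_{\Gamma_\tau}|G|^2\,|\mathrm{d}w|=\lVert G(\cdot+\mathrm{i}\tau)\rVert_{L^2(\Gamma,|\mathrm{d}\zeta|)}^2$, and analogously on $\Gamma_{-\tau}$ for $G_-$. Theorem~\ref{thm-170728-0830} bounds $\sup_{\tau>0}\lVert G(\cdot\pm\mathrm{i}\tau)\rVert_{L^2(\Gamma,|\mathrm{d}\zeta|)}$ by $C(1+M^2)\lVert g\rVert_{L^2(\Gamma,|\mathrm{d}\zeta|)}$, which is exactly the finiteness of the $H^2(\Omega_\pm)$ norms. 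By the Fatou-type theorem in~\cite{DL171}, every function in $H^2(\Omega_\pm)$ possesses a non-tangential boundary limit almost everywhere on $\Gamma$, so $G_+^\ast$ and $G_-^\ast$ exist a.e.; the same reference gives that these limits lie back in $L^2(\Gamma,|\mathrm{d}\zeta|)$, which yields the inclusion $H^2(\Omega_+)+H^2(\Omega_-)\subseteq L^2(\Gamma,|\mathrm{d}\zeta|)$.

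The heart of the argument is the jump relation. Fixing $\zeta_0\in\Gamma$ and $z$ small with $\zeta_0+z\in\Omega_\phi(\zeta_0)\cap\Omega_+$ (so that $\zeta_0-z\in\Omega_-$), the definition of $K_z$ in~\eqref{equ-170623-1150} splits into two Cauchy integrals, giving
\[\int_\Gamma K_z(\zeta,\zeta_0)g(\zeta)\,\mathrm{d}\zeta = G(\zeta_0+z)-G(\zeta_0-z).\]
As $z\to 0$ along the cone, $\zeta_0+z\to\zeta_0$ non-tangentially inside $\Omega_+$ and the reflected point $\zeta_0-z\to\zeta_0$ non-tangentially inside $\Omega_-$, so the right-hand side tends to $G_+^\ast(\zeta_0)-G_-^\ast(\zeta_0)$. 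On the other hand, Lemma~\ref{lem-170622-2122}, applied to $F=g\in L^2(\Gamma,|\mathrm{d}\zeta|)$, identifies the left-hand side limit as $g(\zeta_0)$. Combining, $g(\zeta_0)=G_+^\ast(\zeta_0)-G_-^\ast(\zeta_0)$ for a.e.\ $\zeta_0$, which is precisely the desired decomposition $g=G_+^\ast+(-G_-)^\ast$ and establishes the reverse inclusion $L^2(\Gamma,|\mathrm{d}\zeta|)\subseteq H^2(\Omega_+)+H^2(\Omega_-)$.

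The main obstacle is the careful matching of hypotheses for Lemma~\ref{lem-170622-2122}. That lemma requires $\zeta_0=\zeta(u_0)$ with $u_0$ a Lebesgue point of $g(\zeta(\cdot))$ and $\zeta'(u_0)=|\zeta'(u_0)|\mathrm{e}^{\mathrm{i}\phi_0}$ with $\phi_0\in(-\theta_0,\theta_0)$. Since $a$ is Lipschitz, $a'$ exists a.e.\ with $|a'|\leqslant M$, whence $\phi_0=\arctan a'(u_0)\in[-\theta_0,\theta_0]$, and $g\in L^2\subset L^1_{\mathrm{loc}}$ guarantees a.e.\ point is a Lebesgue point; the only delicate part is the boundary case $|\phi_0|=\theta_0$, which I would dispatch by excluding it off a null set, or by treating it directly in the degenerate situation where $\Gamma$ reduces to a line. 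I must also confirm that the non-tangential approach of $\zeta_0+z$ inside $\Omega_+$ forces $\zeta_0-z$ to approach $\zeta_0$ non-tangentially inside $\Omega_-$, so that both one-sided limits $G_\pm^\ast$ may legitimately be invoked simultaneously along the single parameter $z$.
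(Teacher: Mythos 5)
Your proposal is correct and follows essentially the same route as the paper: realize the splitting via the Cauchy transform, invoke Theorem~\ref{thm-170728-0830} to place $G|_{\Omega_\pm}$ in $H^2(\Omega_\pm)$, and recover $g$ a.e.\ from the jump relation $G(\zeta_0+z)-G(\zeta_0-z)=\int_\Gamma K_z(\zeta,\zeta_0)g(\zeta)\,\mathrm{d}\zeta$ together with Lemma~\ref{lem-170622-2122}. Your extra care about the Lebesgue-point and $\phi_0$ hypotheses, and about the reflected point $\zeta_0-z$ approaching non-tangentially from $\Omega_-$, goes slightly beyond what the paper writes out but does not change the argument.
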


\begin{proof}
  For $g\in L^2(\Gamma,|\mathrm{d}\zeta|)$, let
  \[G_1(w_1)
    = \frac1{2\pi\mathrm{i}} \int_\Gamma 
        \frac{g(\zeta)\,\mathrm{d}\zeta}{\zeta-w_1},
    \quad \text{for } w_1\in\Omega_+,\]
  and
  \[G_2(w_2)
    = \frac1{2\pi\mathrm{i}} \int_\Gamma 
        \frac{g(\zeta)\,\mathrm{d}\zeta}{\zeta-w_2},
    \quad \text{for } w_2\in\Omega_-,\] 
  then both $G_1(w_1)$ and $G_2(w_2)$ are anaylitc~\cite{DL171}. 
  By Theorem~\ref{thm-170728-0830}, there exists constant~$C$, such that 
  \[\sup_{\tau>0} \lVert G_i(\cdot+\mathrm{i}\tau)
        \rVert_{L^2(\Gamma,|\mathrm{d}\zeta|)}
    \leqslant C(1+M^2)\lVert g\rVert_{L^2(\Gamma,|\mathrm{d}\zeta|)},\quad
    \text{for } i=1, 2.\] 
  It means that $G_1\in H^2(\Omega_+)$ 
  and $G_2\in H^2(\Omega_-)$, thus both of them have non-tangential 
  boundary limit a.e.\@ on $\Gamma$. We still denote the limit functions as 
  $G_1$ and $G_2$, respectively.
  
  Now suppose $u_0$ is the Lebesgue point of $g(\zeta(u))$ and $\zeta'(u_0)$ 
  exists. Let $\zeta_0= \zeta(u_0)$, $w_1=\zeta_0+z$ and $w_2=\zeta_0-z$, 
  where $z\in\mathbb{C}$ and $|z|$ is sufficiently small such that 
  $w_1\in\Omega_+$ and $w_2\in\Omega_-$, then
  \begin{align*}
    G_1(w_1)- G_2(w_2)
    &= G_1(\zeta_0+z)- G_2(\zeta_0-z)                                    \\
    &= \frac1{2\pi\mathrm{i}} \int_\Gamma \bigg(
            \frac{g(\zeta)}{\zeta-(\zeta_0+z)}
            - \frac{g(\zeta)}{\zeta-(\zeta_0-z)}
            \bigg)\mathrm{d}\zeta                                       \\
    &= \int_\Gamma K_z(\zeta,\zeta_0)g(\zeta)\,\mathrm{d}\zeta.
  \end{align*}
  Lemma~\ref{lem-170622-2122} implies that
  \[\lim_{z\to 0} |g(\zeta_0)- (G_1(w_1)- G_2(w_2))|= 0,\]
  and $g(\zeta_0)= G_1(\zeta_0)- G_2(\zeta_0)$ follows.
  Thus the corollary is proved.
\end{proof}

\section{The special case of ``$M=0$''}

In this section, we will obtain a more accurate upper boundary of 
the norm of Cauchy tranform under the assumption that
$\lVert a'\rVert_\infty= M= 0$. Notice that in this case, we have
\[\mathrm{d}\mu(z)
  = |y|\mathrm{d}\lambda(z)
  = d(z)\mathrm{d}\lambda(z)
  = \mathrm{d}\nu(z),\]
$\Gamma= \mathbb{R}$ and $\Omega_\pm= \mathbb{C}_\pm$.

\begin{theorem}\label{thm-170730-0900}
  If $F\in\mathcal{T}(\mathbb{C}_+)$ and $\tau>0$ is fixed, then 
  \[\lVert F(\cdot+\mathrm{i}\tau)\rVert_{L^2(\mathbb{R},\mathrm{d}x)}
    \leqslant 2\lVert F'\rVert_{L^2(\mathbb{C}_+,\mathrm{d}\mu)}.\]
  If, furthermore, $F$ is continuous to the boundary, then we also have
  \[\lVert F\rVert_{L^2(\mathbb{R},\mathrm{d}x)}
    = 2\lVert F'\rVert_{L^2(\mathbb{C}_+,\mathrm{d}\mu)}.\]
\end{theorem}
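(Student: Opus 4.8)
The plan is to follow the architecture of the proof of Theorem~\ref{thm-170727-1750}, but to exploit the fact that when $M=0$ the curve is the real axis, $\Omega_\pm=\mathbb{C}_\pm$, and the conformal representation $\Phi_+$ reduces to the identity on $\mathbb{C}_+$; consequently $\Phi_+'\equiv 1$ and $\Phi_+''\equiv 0$, and the area weights collapse to $\mathrm{d}\mu=d(z)\,\mathrm{d}\lambda(z)=\mathrm{d}\nu(z)$ as already noted at the head of this section. First I would establish the exact equality in the continuous-to-boundary case, and then reduce the general case to it by a vertical shift.

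For the equality, I observe that with $\Phi_+=\mathrm{id}$ the auxiliary function $H=F(\Phi_+)(\Phi_+')^{\frac12}$ appearing in Corollary~\ref{cor-170727-1610} is simply $H=F$. Since the hypotheses of that corollary, namely $F\in\mathcal{T}(\mathbb{C}_+)$ and continuity to the boundary, are exactly what we are assuming, Corollary~\ref{cor-170727-1610} yields at once
\[\lVert F\rVert_{L^2(\mathbb{R},\mathrm{d}x)} = 2\lVert F'\rVert_{L^2(\mathbb{C}_+,\mathrm{d}\mu)}.\]
Equivalently, one may invoke Lemma~\ref{lem-170727-1050} directly with $H_1=H_2=F$: conditions (1) and (2) are checked exactly as in Corollary~\ref{cor-170727-1600} via the rectangular exhaustion $E(R)$, now using $|F(w)|\leqslant A/|w|$, $|F'(w)|\leqslant A/|w|^2$ and $d(w)=|y|$, after which the identity $\Delta(|F|^2)=4|F'|^2$ turns the first equation of~\eqref{equ-170727-1100} into the displayed equality. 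This settles the second assertion; the entire $I_1$--$I_2$ estimation of Theorem~\ref{thm-170727-1750} collapses here because the cross term $H\overline{H'}\Phi''$ vanishes identically once $\Phi''\equiv 0$.

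For the general case (no boundary continuity assumed), I would fix $\tau>0$ and set $G(w)=F(w+\mathrm{i}\tau)$ for $w\in\mathbb{C}_+$. As $F$ is holomorphic on $\mathbb{C}_+$, $G$ extends holomorphically across $\mathbb{R}$, so $G\in\mathcal{T}(\mathbb{C}_+)$ and is continuous to the boundary; applying the equality just proved to $G$ gives $\lVert F(\cdot+\mathrm{i}\tau)\rVert_{L^2(\mathbb{R},\mathrm{d}x)}=\lVert G\rVert_{L^2(\mathbb{R},\mathrm{d}x)}=2\lVert G'\rVert_{L^2(\mathbb{C}_+,\mathrm{d}\mu)}$. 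It then remains to show $\lVert G'\rVert_{L^2(\mathbb{C}_+,\mathrm{d}\mu)}\leqslant\lVert F'\rVert_{L^2(\mathbb{C}_+,\mathrm{d}\mu)}$, which I would prove exactly as in Theorem~\ref{thm-170727-1750}: writing $G'(w)=F'(w+\mathrm{i}\tau)$ and using $d(w)\leqslant d(w+\mathrm{i}\tau)$ together with the translation $w\mapsto w+\mathrm{i}\tau$ carrying $\mathbb{C}_+$ into the smaller half-plane $\mathbb{C}_++\mathrm{i}\tau\subset\mathbb{C}_+$, monotonicity of the integral over the shrunken domain gives the bound. Combining yields $\lVert F(\cdot+\mathrm{i}\tau)\rVert_{L^2(\mathbb{R},\mathrm{d}x)}\leqslant 2\lVert F'\rVert_{L^2(\mathbb{C}_+,\mathrm{d}\mu)}$.

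I expect essentially no real obstacle here: the only conceptual point is recognizing that $M=0$ forces $\Phi_+=\mathrm{id}$ (so the weight $\Phi_+'$ disappears and $\mathrm{d}\mu=\mathrm{d}\nu$), after which the continuous case is a verbatim specialization of Corollary~\ref{cor-170727-1610} and the general case is a verbatim specialization of the shift argument in Theorem~\ref{thm-170727-1750}. The only care needed is purely in the bookkeeping, namely noting that the clean equality degenerates to an inequality in the general case precisely because the domain inclusion $\mathbb{C}_++\mathrm{i}\tau\subset\mathbb{C}_+$ is strict, so the boundary continuity cannot be dispensed with if one wants equality.
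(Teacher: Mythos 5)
Your proposal is correct and follows essentially the same route as the paper: the paper also reduces the continuous case to the Green's-theorem identity (Corollary~\ref{cor-170727-1600}, which with $\Phi_+=\mathrm{id}$ coincides with your invocation of Corollary~\ref{cor-170727-1610} since $H=F$ and $\Delta(|F|^2)=4|F'|^2$), and handles the general case by the same vertical-shift argument as in Theorem~\ref{thm-170727-1750}.
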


\begin{proof}
  The continuous case is just Corollary~\ref{cor-170727-1600}, 
  since now $\Phi_+(z)=z$ and $\Delta(|F|^2)= 4|F'|^2$.
  The general case is proved in the same way 
  as in Theorem~\ref{thm-170727-1750}.
\end{proof}

\begin{corollary}\label{cor-170730-0910}
  If $F\in\mathcal{T}(\mathbb{C}_-)$ and $\tau>0$ is fixed, then 
  \[\lVert F(\cdot-\mathrm{i}\tau)\rVert_{L^2(\mathbb{R},\mathrm{d}x)}
    \leqslant 2\lVert F'\rVert_{L^2(\mathbb{C}_-,\mathrm{d}\mu)}.\]
  If, furthermore, $F$ is continuous to the boundary, then we also have
  \[\lVert F\rVert_{L^2(\mathbb{R},\mathrm{d}x)}
    = 2\lVert F'\rVert_{L^2(\mathbb{C}_-,\mathrm{d}\mu)}.\]
\end{corollary}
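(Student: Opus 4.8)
The plan is to mirror the proof of Theorem~\ref{thm-170730-0900}, since this corollary is nothing but its lower half-plane counterpart under the hypothesis $M=0$, where $\Gamma=\mathbb{R}$, $\Omega_-=\mathbb{C}_-$, and $\mathrm{d}\mu=\mathrm{d}\nu$. For the continuous case I would invoke the lower half-plane analogue of Corollary~\ref{cor-170727-1600}, observing that now $\Phi_-(z)=z$, so that $\Phi_-'\equiv 1$ and $\Phi_-''\equiv 0$. The identity furnished there, namely $\int_{\mathbb{R}}|F(\Phi_-)|^2\Phi_-'\,\mathrm{d}x=\int_{\mathbb{C}_-}\Delta(|F(\Phi_-)|^2\Phi_-')\,\mathrm{d}\mu(z)$, then collapses to $\int_{\mathbb{R}}|F|^2\,\mathrm{d}x=\int_{\mathbb{C}_-}\Delta(|F|^2)\,\mathrm{d}\mu(z)$. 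Since $\Delta(|F|^2)=4|F'|^2$ for holomorphic $F$, the right-hand side equals $4\lVert F'\rVert_{L^2(\mathbb{C}_-,\mathrm{d}\mu)}^2$, which is exactly the claimed equality $\lVert F\rVert_{L^2(\mathbb{R},\mathrm{d}x)}=2\lVert F'\rVert_{L^2(\mathbb{C}_-,\mathrm{d}\mu)}$.

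For the general case, where $F\in\mathcal{T}(\mathbb{C}_-)$ need not be continuous to the boundary, I would fix $\tau>0$ and set $G(w)=F(w-\mathrm{i}\tau)$ for $w\in\mathbb{C}_-$. Translating downward by $\mathrm{i}\tau$ keeps the image inside $\mathbb{C}_-$ and pushes it away from the real axis, so $G\in\mathcal{T}(\mathbb{C}_-)$ and $G$ is continuous to the boundary. The equality just established therefore applies to $G$, giving $\lVert F(\cdot-\mathrm{i}\tau)\rVert_{L^2(\mathbb{R},\mathrm{d}x)}=\lVert G\rVert_{L^2(\mathbb{R},\mathrm{d}x)}=2\lVert G'\rVert_{L^2(\mathbb{C}_-,\mathrm{d}\mu)}$.

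It then remains only to bound $\lVert G'\rVert_{L^2(\mathbb{C}_-,\mathrm{d}\mu)}$ by $\lVert F'\rVert_{L^2(\mathbb{C}_-,\mathrm{d}\mu)}$, which I would carry out exactly as in Theorem~\ref{thm-170727-1750}: from $G'(w)=F'(w-\mathrm{i}\tau)$ together with the observation that $d(w)\leqslant d(w-\mathrm{i}\tau)$ for $w\in\mathbb{C}_-$ and $\tau>0$, the change of variable $w\mapsto w-\mathrm{i}\tau$ maps $\mathbb{C}_-$ into $\mathbb{C}_-$ and yields $\lVert G'\rVert_{L^2(\mathbb{C}_-,\mathrm{d}\mu)}\leqslant\lVert F'\rVert_{L^2(\mathbb{C}_-,\mathrm{d}\mu)}$; combining this with the previous display gives the stated inequality. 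The argument is entirely parallel to the upper half-plane case, so there is no genuine obstacle. The only point demanding care is the orientation of the distance estimate in $\mathbb{C}_-$: for a point with negative imaginary part, subtracting $\mathrm{i}\tau$ increases rather than decreases the distance to the boundary $\mathbb{R}$, so the inequality $d(w)\leqslant d(w-\mathrm{i}\tau)$ runs in the direction needed for the monotonicity step to go through.
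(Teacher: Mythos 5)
Your proposal is correct and follows exactly the route the paper intends: the continuous case is the $\mathbb{C}_-$ analogue of Corollary~\ref{cor-170727-1600} with $\Phi_-(z)=z$ and $\Delta(|F|^2)=4|F'|^2$, and the general case is the translation-plus-monotonicity argument of Theorem~\ref{thm-170727-1750}, with the sign of the shift correctly reversed so that $d(w)\leqslant d(w-\mathrm{i}\tau)$ and $\mathbb{C}_--\mathrm{i}\tau\subset\mathbb{C}_-$. No gaps.
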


The ``$M=0$'' version of Theorem~\ref{thm-170727-2120} is 
the following theorem.

\begin{theorem}\label{thm-170730-0920}
  Let $f\in L^2(\mathbb{C}_+,\mathrm{d}\mu)$ be compactly supported, and define, 
  for $z_2\in\overline{\mathbb{C}_-}$,
  \[Tf(z_2)= \iint_{\mathbb{C}_+} \frac{f(z_1)d(z_1)}{(z_1-z_2)^2} 
      \,\mathrm{d}\lambda(z_1)
    = \iint_{\mathbb{C}_+} \frac{f(z_1)\,\mathrm{d}\mu(z_1)}{(z_1-z_2)^2}.\]
  Then, $\lVert Tf\rVert_{L^2(\Gamma,|\mathrm{d}\zeta|)}
    \leqslant 4\pi\lVert f\rVert_{L^2(\mathbb{C}_+,\mathrm{d}\mu)}$.
\end{theorem}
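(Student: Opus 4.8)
The plan is to run the proof of Theorem~\ref{thm-170727-2120} essentially verbatim, taking advantage of the simplifications of the case $M=0$ (so $\Gamma=\mathbb{R}$, $\Omega_\pm=\mathbb{C}_\pm$, $d(z)=|\mathrm{Im}\,z|$ and $\mathrm{d}\mu=\mathrm{d}\nu$) and replacing the crude constants of the general argument by the sharp ones now available. First I would record, exactly as there, that the compact support $E=\mathrm{supp}\,f\subset D(0,R)$ forces $|Tf(z_2)|\leqslant 2A/|z_2|$ and $|(Tf)'(z_2)|\leqslant 8A/|z_2|^2$ for $|z_2|>2R$, so $Tf\in\mathcal{T}(\mathbb{C}_-)$; and since $E$ is a compact subset of $\mathbb{C}_+$, the kernel is smooth for $z_2$ near $\mathbb{R}$, so $Tf$ extends holomorphically across $\mathbb{R}$ and is continuous to the boundary. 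This lets me invoke Corollary~\ref{cor-170730-0910} in its continuous-to-boundary form, giving the clean identity
\[\lVert Tf\rVert_{L^2(\mathbb{R},\mathrm{d}x)}
   =2\lVert (Tf)'\rVert_{L^2(\mathbb{C}_-,\mathrm{d}\mu)},\]
so that everything reduces to estimating $\lVert (Tf)'\rVert_{L^2(\mathbb{C}_-,\mathrm{d}\mu)}$.

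For that estimate I would reuse the Schur operator of Theorem~\ref{thm-170727-2120}: with $F=f\,d^{1/2}$ and
\[K(z_1,z_2)=d(z_1)^{\frac12}d(z_2)^{\frac12}|z_1-z_2|^{-3},\]
differentiation under the integral gives $(Tf)'(z_2)=2\iint_{\mathbb{C}_+}f(z_1)d(z_1)(z_1-z_2)^{-3}\,\mathrm{d}\lambda(z_1)$, whence $d(z_2)^{1/2}|(Tf)'(z_2)|\leqslant 2(S|F|)(z_2)$ and therefore $\lVert (Tf)'\rVert_{L^2(\mathbb{C}_-,\mathrm{d}\mu)}\leqslant 2\lVert S\rVert\,\lVert f\rVert_{L^2(\mathbb{C}_+,\mathrm{d}\mu)}$, exactly as in the general proof (here $\lVert F\rVert_{L^2(\mathbb{C}_+,\mathrm{d}\lambda)}=\lVert f\rVert_{L^2(\mathbb{C}_+,\mathrm{d}\mu)}$).

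The one place where I must beat the general proof — and the crux of the whole argument — is the Schur bound. The crude domination $d(z_1)\leqslant|z_1-z_2|$ together with $\mathbb{C}_+\subset\mathbb{C}\setminus D(z_2,d(z_2))$ reproduces the generic estimate $\lVert S\rVert\leqslant 4\pi$, which would only yield the final constant $16\pi$. Instead I would evaluate the Schur integral over the half-plane \emph{exactly}. Fixing $z_2=x_2+\mathrm{i}y_2\in\mathbb{C}_-$, translating so that $x_2=0$ and writing $b=|y_2|$, the inner $x_1$-integral is elementary,
\[\int_{-\infty}^{+\infty}\frac{\mathrm{d}x_1}{(x_1^2+(y_1+b)^2)^{3/2}}=\frac{2}{(y_1+b)^2},\]
and the remaining $y_1$-integral is a Beta integral,
\[\int_0^{+\infty}\frac{y_1^{1/2}}{(y_1+b)^2}\,\mathrm{d}y_1=b^{-1/2}B\Big(\tfrac32,\tfrac12\Big)=\frac{\pi}{2}\,b^{-1/2}.\]
Multiplying by the prefactor $d(z_2)^{1/2}=b^{1/2}$ leaves $\iint_{\mathbb{C}_+}K(z_1,z_2)\,\mathrm{d}\lambda(z_1)=\pi$; an identical computation with the roles of $z_1$ and $z_2$ reversed gives $\iint_{\mathbb{C}_-}K(z_1,z_2)\,\mathrm{d}\lambda(z_2)=\pi$ for fixed $z_1\in\mathbb{C}_+$. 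Schur's lemma~\cite{Gr08} then yields $\lVert S\rVert\leqslant\pi$.

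Assembling the pieces, $\lVert (Tf)'\rVert_{L^2(\mathbb{C}_-,\mathrm{d}\mu)}\leqslant 2\pi\lVert f\rVert_{L^2(\mathbb{C}_+,\mathrm{d}\mu)}$, and the factor $2$ from Corollary~\ref{cor-170730-0910} gives $\lVert Tf\rVert_{L^2(\mathbb{R},\mathrm{d}x)}\leqslant 4\pi\lVert f\rVert_{L^2(\mathbb{C}_+,\mathrm{d}\mu)}$, as claimed. The main obstacle is precisely the exact evaluation of this half-plane Schur integral: the entire improvement from the generic $16\pi$ down to $4\pi$ rests on replacing the crude geometric domination by the explicit Beta-function computation above, and nothing else in the chain needs to change.
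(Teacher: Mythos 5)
Your proposal is correct and follows the paper's own proof essentially step for step: verify $Tf\in\mathcal{T}(\mathbb{C}_-)$ with boundary continuity, invoke Corollary~\ref{cor-170730-0910} for the identity $\lVert Tf\rVert_{L^2(\mathbb{R},\mathrm{d}x)}=2\lVert (Tf)'\rVert_{L^2(\mathbb{C}_-,\mathrm{d}\mu)}$, and sharpen the Schur bound to $\lVert S\rVert\leqslant\pi$ by evaluating the half-plane kernel integral exactly via the Beta function $B(\tfrac32,\tfrac12)=\tfrac\pi2$, which is precisely the computation the paper carries out. The only cosmetic difference is that you perform the $x_1$-integration in closed form first ($2/(y_1+b)^2$) rather than the paper's simultaneous substitution, but the resulting value $\pi$ and the final constant $4\pi$ agree.
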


\begin{proof}
  We could still verify that $Tf\in\mathcal{T}(\mathbb{C}_-)$ and 
  is continuous to the boundary $\mathbb{R}$, then, 
  by Corollary~\ref{cor-170730-0910}, 
  \[\lVert Tf\rVert_{L^2(\mathbb{R},\mathrm{d}x)}
    = 2\lVert (Tf)'\rVert_{L^2(\mathbb{C}_-,\mathrm{d}\mu)}.\]
  
  Define an operator $S\colon L^2(\mathbb{C}_+,\mathrm{d}\lambda)\to 
    L^2(\mathbb{C}_-,\mathrm{d}\lambda)$ by
  \begin{align*}
    SF(z_2)
    &= d(z_2)^{\frac12} \iint_{\mathbb{C}_+} 
         \frac{F(z_1)d(z_1)^{\frac12}}{|z_1-z_2|^3}\,\mathrm{d}\lambda(z_1)\\
    &= \iint_{\mathbb{C}_+} K(z_1,z_2)F(z_1)\,\mathrm{d}\lambda(z_1),
  \end{align*}
  where $z_1\in\mathbb{C}_+$, $z_2\in\mathbb{C}_-$, 
  and $K(z_1,z_2)= d(z_1)^{\frac12} d(z_2)^{\frac12} |z_1-z_2|^{-3}$.
  Let $z_1=x_1+\mathrm{i}y_1$, $z_2=x_2+\mathrm{i}y_2$, we have
  $d(z_1)= y_1$ and $d(z_2)= |y_2|$ by the definition of $d(z)$. 
  Then fix $z_2$, 
  \begin{align*}
    \iint_{\mathbb{C}_+} K(z_1,z_2)\,\mathrm{d}\lambda(z_1)
    &= \iint_{\mathbb{C}_+} \frac{|y_2|^{\frac12}y_1^{\frac12}
             \,\mathrm{d}\lambda(z_1)}
          {\big((x_1-x_2)^2+(y_1-y_2)^2\big)^{\frac32}}              \\
    &= \int_0^{+\infty}\!\!\int_{-\infty}^{+\infty} \frac{\mathrm{d}x_1}
              {|(x_1-x_2)^2+(y_1-y_2)^2|^{\frac32}}
          |y_2|^{\frac12}y_1^{\frac12}\,\mathrm{d}y_1           \\
    &= \int_0^{+\infty} \frac{|y_2|^{\frac12}y_1^{\frac12}}{(y_1-y_2)^2}
          \,\mathrm{d}y_1 \int_{-\infty}^{+\infty} 
          \frac{\mathrm{d}t}{(t^2+1)^{\frac32}}                    \\
    &= \frac{|y_2|^{\frac12}\cdot|y_2|^{\frac32}}{|y_2|^2}
          \int_0^{+\infty} \frac{t^{\frac12}\,\mathrm{d}t}{(t+1)^2} 
          \cdot 2\int_0^{+\infty} \frac{\frac12 t^{-\frac12}\,\mathrm{d}t} 
              {(t+1)^{\frac32}}                                   \\
    &= \int_0^{+\infty} \frac{t^{\frac12}\,\mathrm{d}t}{(t+1)^2} 
       \cdot \int_0^{+\infty} \frac{t^{-\frac12}\,\mathrm{d}t} 
             {(t+1)^{\frac32}}.
  \end{align*}
  Let $s=\frac{t}{1+t}$ for $t\in (0,+\infty)$, then $s\in(0,1)$, 
  \[t= \frac{s}{1-s},\quad 
    t+1= \frac1{1-s},\quad
    \text{and }\mathrm{d}t= \frac{\mathrm{d}s}{(1-s)^2},\]
  By invoking the Euler's Gamma function $\Gamma(\cdot)$ 
  and Beta function $B(\cdot)$, we have
  \begin{align*}
    \int_0^{+\infty} \frac{t^{\frac12}\,\mathrm{d}t}{(t+1)^2} 
    &= \int_0^1 \frac{s^{\frac12}(1-s)^{-\frac12}}{(1-s)^{-2}}
         \cdot (1-s)^{-2}\,\mathrm{d}s                        \\
    &= \int_0^1 s^{\frac12}(1-s)^{-\frac12}\,\mathrm{d}s      \\
    &= B\Big(\frac32,\frac12\Big)
     = \frac{\Gamma(\frac32)\Gamma(\frac12)}{\Gamma(2)}       \\
    &= \frac12\Gamma\Big(\frac12\Big)^2
     = \frac\pi2,
  \end{align*}
  and 
  \begin{align*}
    \int_0^{+\infty} \frac{t^{-\frac12}\,\mathrm{d}t}{(t+1)^{\frac32}} 
    &= \int_0^1 \frac{s^{-\frac12}(1-s)^{\frac12}}{(1-s)^{-\frac32}}
         \cdot (1-s)^{-2}\,\mathrm{d}s                        \\
    &= \int_0^1 s^{-\frac12}\,\mathrm{d}s      \\
    &= 2,
  \end{align*}
  then
  \[\iint_{\mathbb{C}_+} K(z_1,z_2)\,\mathrm{d}\lambda(z_1)
    = \frac\pi2\cdot 2
    = \pi.\]
  The same computation yields that, for fixed $z_1\in\mathbb{C}_+$,
  \[\iint_{\mathbb{C}_-} K(z_1,z_2)\,\mathrm{d}\lambda(z_2)=\pi.\]
  By Schur's lemma, $S$ is a bounded operator from 
  $L^2(\mathbb{C}_+,\mathrm{d}\lambda)$ to $L^2(\mathbb{C}_-,\mathrm{d}\lambda)$, 
  and $\lVert S\rVert\leqslant \pi$.
  
  Let $F(z_1)= f(z_1)d(z_1)^{\frac12}$, then 
  \[\lVert SF\rVert_{L^2(\mathbb{C}_-,\mathrm{d}\lambda)}
    \leqslant \pi\lVert F\rVert_{L^2(\mathbb{C}_+,\mathrm{d}\lambda)}
    = \pi\lVert f\rVert_{L^2(\mathbb{C}_+,\mathrm{d}\mu)},\]
  and 
  \[|(Tf)'(z_2)|
    \leqslant 2\iint_{\mathbb{C}_+} \frac{|f(z_1)|d(z_1)}{|z_1-z_2|^3} 
            \,\mathrm{d}\lambda(z_1) 
    = \frac{2(SF)(z_2)}{d(z_2)^{\frac12}},\]
  which follows that,
  \begin{align*}
    \lVert (Tf)'\rVert_{L^2(\mathbb{C}_-,\mathrm{d}\mu)}
    &\leqslant 2\lVert d^{-\frac12}SF\rVert_{L^2(\mathbb{C}_-,\mathrm{d}\mu)} \\
    &= 2\lVert SF\rVert_{L^2(\mathbb{C}_-,\mathrm{d}\lambda)}                 \\
    &\leqslant 2\pi\lVert f\rVert_{L^2(\mathbb{C}_+,\mathrm{d}\mu)},
  \end{align*}
  Remeber that $\lVert Tf\rVert_{L^2(\mathbb{R},\mathrm{d}x)}
    = 2\lVert (Tf)'\rVert_{L^2(\mathbb{C}_-,\mathrm{d}\mu)}$, then
  \[\lVert Tf\rVert_{L^2(\mathbb{R},\mathrm{d}x)}
    \leqslant 4\pi\lVert f\rVert_{L^2(\mathbb{C}_+,\mathrm{d}\mu)},\]
  and the theorem is proved.
\end{proof}

\begin{corollary}
  Let $f\in L^2(\mathbb{C}_-,\mathrm{d}\mu)$ be compactly supported, and define, 
  for $z_1\in\overline{\mathbb{C}_+}$,
  \[Tf(z_1)= \iint_{\mathbb{C}_+} \frac{f(z_2)d(z_2)}{(z_2-z_1)^2} 
      \,\mathrm{d}\lambda(z_2)
    = \iint_{\mathbb{C}_+} \frac{f(z_2)\,\mathrm{d}\mu(z_2)}{(z_2-z_1)^2}.\]
  Then, $\lVert Tf\rVert_{L^2(\Gamma,|\mathrm{d}\zeta|)}
    \leqslant 4\pi\lVert f\rVert_{L^2(\mathbb{C}_+,\mathrm{d}\mu)}$.
\end{corollary}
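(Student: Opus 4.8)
This is the $M=0$, lower half-plane counterpart of Theorem~\ref{thm-170730-0920}, obtained from that theorem by interchanging the roles of $\mathbb{C}_+$ and $\mathbb{C}_-$ (so that $f$ lives on $\mathbb{C}_-$, the integration defining $Tf$ runs over $\mathbb{C}_-$, and $Tf$ is holomorphic on $\overline{\mathbb{C}_+}$). The plan is therefore to transcribe the proof of Theorem~\ref{thm-170730-0920} under this reflection, invoking Corollary~\ref{cor-170730-0910} in place of the $M=0$ $+$--case. No new idea is needed; the work is to confirm that each step survives the reflection $z\mapsto\overline{z}$.

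First I would verify that $Tf\in\mathcal{T}(\mathbb{C}_+)$ and is continuous up to $\mathbb{R}$. Since $f$ is compactly supported in $\mathbb{C}_-$, say inside $D(0,R)\cap\mathbb{C}_-$, the kernel $(z_2-z_1)^{-2}$ is holomorphic in $z_1$ on a neighborhood of $\overline{\mathbb{C}_+}$, so $Tf$ is holomorphic there and extends continuously to the boundary. For $|z_1|>2R$, using $d(z_2)\leqslant|z_2-z_1|$ for $z_2\in\mathbb{C}_-$, $z_1\in\mathbb{C}_+$, together with H\"older's inequality exactly as in Theorem~\ref{thm-170727-2120}, I would obtain $|Tf(z_1)|\leqslant 2A/|z_1|$ and $|(Tf)'(z_1)|\leqslant 8A/|z_1|^2$, which places $Tf$ in $\mathcal{T}(\mathbb{C}_+)$. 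Corollary~\ref{cor-170730-0910} then gives the boundary identity $\lVert Tf\rVert_{L^2(\mathbb{R},\mathrm{d}x)}=2\lVert (Tf)'\rVert_{L^2(\mathbb{C}_+,\mathrm{d}\mu)}$.

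The core estimate is again a Schur test. I would introduce $S\colon L^2(\mathbb{C}_-,\mathrm{d}\lambda)\to L^2(\mathbb{C}_+,\mathrm{d}\lambda)$ with kernel $K(z_1,z_2)=d(z_1)^{1/2}d(z_2)^{1/2}|z_1-z_2|^{-3}$, now integrated over $\mathbb{C}_-$. Writing $d(z_1)=y_1$ and $d(z_2)=|y_2|$ and reducing by the substitution $s=t/(1+t)$, both marginal integrals reproduce exactly the computation in Theorem~\ref{thm-170730-0920}: the values $B(\tfrac32,\tfrac12)=\tfrac\pi2$ and $\int_0^{\infty}t^{-1/2}(t+1)^{-3/2}\,\mathrm{d}t=2$ reappear, so each marginal equals $\pi$ and hence $\lVert S\rVert\leqslant\pi$. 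Setting $F(z_2)=f(z_2)d(z_2)^{1/2}$ yields $|(Tf)'(z_1)|\leqslant 2(SF)(z_1)/d(z_1)^{1/2}$, whence $\lVert (Tf)'\rVert_{L^2(\mathbb{C}_+,\mathrm{d}\mu)}\leqslant 2\pi\lVert f\rVert_{L^2(\mathbb{C}_-,\mathrm{d}\mu)}$. Combining with the boundary identity delivers the factor $4\pi$.

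Since everything is a literal reflection of an already-proved result, I anticipate no genuine obstacle. The one point that deserves a line of care is confirming that the Schur kernel integrals are truly invariant under swapping the half-planes: because $K$ depends on the heights only through $d(z_1)=y_1>0$ and $d(z_2)=|y_2|=-y_2>0$, and on the positions only through $|z_1-z_2|$, the reflection $z\mapsto\overline{z}$ leaves both marginals unchanged. Thus the constant $\pi$, and with it the final bound $4\pi$, is preserved exactly, and the corollary follows.
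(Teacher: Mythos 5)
Your proposal is correct and is exactly the argument the paper intends: this corollary is stated without proof as the reflected twin of Theorem~\ref{thm-170730-0920}, and your mirrored Schur-test computation combined with the boundary identity is precisely that proof (you also rightly read the domain of integration as $\mathbb{C}_-$, correcting an evident typo in the statement). The one slip is a citation: since $Tf\in\mathcal{T}(\mathbb{C}_+)$, the identity $\lVert Tf\rVert_{L^2(\mathbb{R},\mathrm{d}x)}=2\lVert (Tf)'\rVert_{L^2(\mathbb{C}_+,\mathrm{d}\mu)}$ should be drawn from Theorem~\ref{thm-170730-0900}, not from Corollary~\ref{cor-170730-0910}, which concerns functions in $\mathcal{T}(\mathbb{C}_-)$.
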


Now we could proof the boundedness of Cauchy integral on $\mathbb{R}$, 
which is a special case of Theorem~\ref{thm-170728-0830}.
\begin{theorem}
  If $g(x)\in L^2(\mathbb{R},\mathrm{d}x)$ and $G(z)$ the Cauchy integral 
  of $g$ on $\mathbb{R}$, that is 
  \[G(z)
    =\frac1{2\pi\mathrm{i}} \int_\Gamma 
       \frac{g(t)\,\mathrm{d}t}{t-z},
    \quad \text{for } z\in\mathbb{C}_\pm\]
  then 
  \[\sup_{\tau>0} \lVert G(\cdot\pm\mathrm{i}\tau)
        \rVert_{L^2(\mathbb{R},\mathrm{d}x)}
    \leqslant 4\lVert g\rVert_{L^2(\mathbb{R},\mathrm{d}x)}.\]
\end{theorem}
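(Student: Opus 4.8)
The plan is to repeat the argument of Theorem~\ref{thm-170728-0830} essentially line for line, now substituting the sharper $M=0$ estimates of Theorem~\ref{thm-170730-0900} and Theorem~\ref{thm-170730-0920} for the estimates of Theorem~\ref{thm-170727-1750} and Theorem~\ref{thm-170727-2120}. Throughout I would invoke the simplifications recorded at the start of this section: $\Gamma=\mathbb{R}$, $\Omega_\pm=\mathbb{C}_\pm$, $\mathrm{d}\mu=\mathrm{d}\nu$, and $\sqrt{1+M^2}=1$.

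First I would reduce to compactly supported $g$. Assuming $\operatorname{supp}g\subset D(0,R)$, the same elementary size estimates as in Theorem~\ref{thm-170728-0830} give $|G(z)|\leqslant A/|z|$ and $|G'(z)|\leqslant 2A/|z|^2$ for $|z|>2R$, with $A=\tfrac1\pi\lVert g\rVert_{L^2(\mathbb{R},\mathrm{d}x)}(2R)^{1/2}$ (the factor $\sqrt{1+M^2}$ now being $1$), so $G\in\mathcal{T}(\mathbb{C}_\pm)$. I would then focus on the upper half-plane case $G\in\mathcal{T}(\mathbb{C}_+)$, the lower case being symmetric via Corollary~\ref{cor-170730-0910}.

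The core estimate proceeds by duality. Writing $B$ for the set of compactly supported $f$ with $\lVert f\rVert_{L^2(\mathbb{C}_+,\mathrm{d}\mu)}\leqslant 1$, so that $\lVert G'\rVert_{L^2(\mathbb{C}_+,\mathrm{d}\mu)}=\sup_{f\in B}\bigl|\iint_{\mathbb{C}_+}G'\overline f\,\mathrm{d}\mu\bigr|$, I would chain together three steps: (i) Theorem~\ref{thm-170730-0900}, giving $\lVert G(\cdot+\mathrm{i}\tau)\rVert_{L^2(\mathbb{R},\mathrm{d}x)}\leqslant 2\lVert G'\rVert_{L^2(\mathbb{C}_+,\mathrm{d}\mu)}$; (ii) the integral formula for $G'$ together with Fubini's theorem, which turns the pairing into $\tfrac1{2\pi\mathrm{i}}\int_{\mathbb{R}}g(\zeta)(T\overline f)(\zeta)\,\mathrm{d}\zeta$, since $\iint_{\mathbb{C}_+}\overline{f(z_1)}d(z_1)(\zeta-z_1)^{-2}\,\mathrm{d}\lambda(z_1)=(T\overline f)(\zeta)$ for $\zeta\in\mathbb{R}$; and (iii) Cauchy--Schwarz on $\mathbb{R}$ followed by Theorem~\ref{thm-170730-0920}, giving $\lVert T\overline f\rVert_{L^2(\mathbb{R},\mathrm{d}x)}\leqslant 4\pi\lVert f\rVert_{L^2(\mathbb{C}_+,\mathrm{d}\mu)}$. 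Collecting constants yields the factor $\tfrac1{2\pi}\cdot 2\cdot 4\pi=4$, i.e.\ $\lVert G(\cdot+\mathrm{i}\tau)\rVert_{L^2(\mathbb{R},\mathrm{d}x)}\leqslant 4\lVert g\rVert_{L^2(\mathbb{R},\mathrm{d}x)}$ uniformly in $\tau$.

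Finally I would remove the compact-support assumption exactly as in Theorem~\ref{thm-170728-0830}: setting $g_n=\chi_{D(0,n)}g$, the Cauchy integrals $G_n$ converge pointwise to $G$ on each horizontal line (by Cauchy--Schwarz and $\lVert g_n-g\rVert_{L^2(\mathbb{R},\mathrm{d}x)}\to 0$), and Fatou's lemma upgrades the uniform bound for the $G_n$ to the same bound for $G$. The step I expect to require the most care is the Fubini interchange in (ii): one must check that the double integral over $\mathbb{R}\times\mathbb{C}_+$ of $|g(\zeta)|\,|f(z_1)|\,d(z_1)\,|\zeta-z_1|^{-2}$ is finite, so that the order of integration may be swapped and the inner integral correctly identified as $(T\overline f)$ evaluated on the boundary $\mathbb{R}\subset\overline{\mathbb{C}_-}$. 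The compact supports of $g$ and $f$, together with $d(z_1)\leqslant|\zeta-z_1|$ for $\zeta\in\mathbb{R}$ and $z_1\in\mathbb{C}_+$, make this routine, but it is the one place where absolute integrability must genuinely be verified.
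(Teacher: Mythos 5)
Your proposal is correct and follows essentially the same route as the paper: reduction to compactly supported $g$ via the $\mathcal{T}(\mathbb{C}_\pm)$ size estimates, the duality pairing with $\lVert G'\rVert_{L^2(\mathbb{C}_+,\mathrm{d}\mu)}$, Fubini, and the chain of Theorem~\ref{thm-170730-0900} and Theorem~\ref{thm-170730-0920} giving the constant $\tfrac1{2\pi}\cdot 2\cdot 4\pi=4$. The only difference is cosmetic: the paper omits the truncation-and-Fatou step for general $g$ (referring back to Theorem~\ref{thm-170728-0830}), whereas you write it out, and your remark about verifying absolute integrability for the Fubini interchange is a reasonable point of care that the paper leaves implicit.
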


\begin{proof}
  We will prove the theorem while supposing that $g$ is compactly supported 
  on $\mathbb{R}$, and omit the proof of the general case, which could be 
  treated by the same method as in Theorem~\ref{thm-170728-0830}. 
  It has been proved in that theorem that $G\in\mathcal{T}(\mathbb{C}_\pm)$ 
  if $g$ is non-zero on a compact interval of $\mathbb{R}$. We suppose that 
  $G\in\mathcal{T}(\mathbb{C}_+)$, and let
  \[B= \{f\in L^2(\mathbb{C}_+,\mathrm{d}\mu)\colon 
      \lVert f\rVert_{L^2(\mathbb{C}_+,\mathrm{d}\mu)}\leqslant 1,
      f \text{ is compactly supported in } \mathbb{C}_+\},\]
  then 
  \[\lVert G'\rVert_{L^2(\mathbb{C}_+,\mathrm{d}\mu)}
    = \sup_{f\in B} \Big|\iint_{\mathbb{C}_+} 
          G'\overline{f}\,\mathrm{d}\mu\Big|.\]
  Fix $\tau>0$, by Theorem~\ref{thm-170730-0900}, Fubini's theorem and 
  Theorem~\ref{thm-170730-0920}, we have,
  \begin{align*}
    &\lVert G(\cdot+\mathrm{i}\tau)\rVert_{L^2(\mathbb{R},\mathrm{d}x)} \\
    ={}& 2\lVert G'\rVert_{L^2(\mathbb{C}_+,\mathrm{d}\mu)}             \\
    ={}& \frac1\pi \sup_{f\in B} \Big|\iint_{\mathbb{C}_+} 
         \Big(\int_\mathbb{R} \frac{g(t)\,\mathrm{d}t}{(t-z_1)^2}\Big) 
         \overline{f(z_1)}d(z_1)\,\mathrm{d}\lambda(z_1)\Big|          \\
    ={}& \frac1\pi \sup_{f\in B} \Big|
         \int_\mathbb{R} g(t)(T\overline{f})(t)\,\mathrm{d}t\Big| \\
    \leqslant{}& \frac1\pi \sup_{f\in B} 
         \Big(\lVert g\rVert_{L^2(\mathbb{R},\mathrm{d}x)}
         \lVert T\overline{f}\rVert_{L^2(\mathbb{R},\mathrm{d}x)}\Big)   \\
    \leqslant{}& \frac{4\pi}{\pi} 
         \lVert g\rVert_{L^2(\mathbb{R},\mathrm{d}x)}
         \sup_{f\in B} \lVert f\rVert_{L^2(\mathbb{C}_+,\mathrm{d}\mu)}    \\
    \leqslant{}& 4\lVert g\rVert_{L^2(\mathbb{R},\mathrm{d}x)},
  \end{align*}
  and this proves the theorem.
\end{proof}

\section*{Funding}
This work is supported by National Natural Science Foundation 
of China(Grant No.\@ 11271045).

\end{document}